\documentclass[11pt]{article}
\usepackage[letterpaper,left=1in,top=1in,textwidth=6.6in,textheight=8.9in]{geometry}
\def\baselinestretch{1.3}

\usepackage{url}
\usepackage{amsmath}
\usepackage{amssymb}
\usepackage{amsfonts}
\usepackage{amsthm}
\usepackage{bm}
\usepackage[bottom]{footmisc}
\usepackage{etoolbox}
\patchcmd{\maketitle}{\rlap}{\mbox}{}{}
\usepackage{authblk}

\usepackage{endnotes}

\usepackage{natbib}
 \bibpunct[, ]{(}{)}{,}{a}{}{,}%

\usepackage{graphicx}
\usepackage{enumerate}
\usepackage{ifthen}
\usepackage{booktabs}      
\usepackage[normalem]{ulem}
\usepackage{longtable}     
\usepackage{array}         
\usepackage{rotating}
\usepackage{lscape}
\usepackage{makecell}
\usepackage{color}
\usepackage[dvipsnames]{xcolor}
\usepackage{caption}
\DeclareCaptionFont{bodysize}{\fontsize{11}{13.6}\selectfont}
\let\footnote=\endnote

\usepackage[vlined]{algorithm2e}
\RestyleAlgo{ruled}
\SetKwComment{tcp}{$\triangleright$\ }{}
\SetCommentSty{textnormal}
\SetAlgoSkip{}
\SetKwProg{Fn}{function}{}{end function}
\SetKwComment{Comment}{$\triangleright$\ }{}
\newcommand{\RN}[1]{\uppercase\expandafter{\romannumeral#1}}

\newlength{\figbase}
\makeatletter
\newcommand\tablenormalsize{\@setfontsize\normalsize{10}{12}}
\newcommand\tablesmall{\@setfontsize\small{9}{11}}
\newcommand\tablefootnotesize{\@setfontsize\footnotesize{8}{10}}
\newcommand\algosmall{\@setfontsize\small{10}{17.25}}
\makeatother
\newcommand{\floatsetup}{%
  \def\baselinestretch{1}%
  \setlength{\tabcolsep}{4pt}%
  \setlength{\arraycolsep}{1.8pt}}
\AtBeginEnvironment{figure}{\floatsetup}
\AtBeginEnvironment{table}{
  \floatsetup
  \let\normalsize\tablenormalsize
  \let\small\tablesmall
  \let\footnotesize\tablefootnotesize}
\AtBeginEnvironment{algorithm}{\floatsetup\let\small\algosmall}
\AtBeginEnvironment{longtable}{\floatsetup\fontsize{9}{11}\selectfont}

\newcommand{\floatnote}[1]{{\footnotesize\leftskip=0pt\rightskip=0pt\parfillskip=0pt plus 1fil #1\par}}
\newcommand{\FIGURE}[3]{%
  \centering #1\par
  \caption{#2}%
  \ifstrempty{#3}{}{\par\floatnote{#3}}}
\newcommand{\TABLE}[3]{%
  \centering
  \caption{#1}%
  #2\par
  \ifstrempty{#3}{}{\par\floatnote{#3}}}

\theoremstyle{plain}

\newtheorem{lemma}{Lemma}
\newtheorem{proposition}{Proposition}

\newtheorem{assumption}{Assumption}
\theoremstyle{definition}

\newtheorem{remark}{Remark}
\newcommand{\Halmos}{\qedhere}

\newcommand{\argmin}{\mathop{\rm arg\,min}}

\allowdisplaybreaks

\begin{document}

\title{Dynamic Control of a Make-to-Order Manufacturing System under Throughput Time Constraints: An Effective Computational Method in the High-Dimensional Case}

\author{Baris Ata\thanks{\texttt{baris.ata@chicagobooth.edu}}}
\author{Chun Li\thanks{\texttt{cli11@chicagobooth.edu}}}
\affil{Booth School of Business, University of Chicago}

\date{}

\maketitle

{\renewcommand{\thefootnote}{}%
 \footnotetext{Our code is available at \url{https://github.com/chunli0574/RBM-TPT-code}.}}

\begin{abstract}
We consider dynamic control of a make-to-order production system that serves price- and delay-sensitive customers. The production system is modeled as a multiclass queueing network where the customers' delay sensitivity is modeled via throughput-time constraints. To maximize her profit while meeting these throughput time constraints, the system manager uses three different types of control: i) dynamic pricing decisions; ii) dynamic order-of-service (scheduling) decisions; and iii) outsourcing decisions upon arrival of new orders. We approximate this problem by a multi-dimensional Brownian control problem in heavy traffic, and develop a neural network-based computational method tailored for high-dimensional problems. By interpreting the solution in the context of the original model, we propose an integrated dynamic pricing, outsourcing, and scheduling policy. The effectiveness of this policy is demonstrated through a simulation study of two test problems drawn from the literature.

\medskip
\noindent\textbf{Keywords:} Dynamic pricing; Throughput time constraints; Outsourcing; High-dimensional stochastic control; Neural networks; Make-to-order manufacturing systems; Queueing networks; Heavy-traffic analysis
\end{abstract}

\section{Introduction\label{sec: introduction}}%

This paper studies dynamic control of a multiclass queueing network with throughput-time constraints that is motivated by make-to-order manufacturing systems, serving price- and delay-sensitive customers. The system manager adjusts prices and allocates the service effort among different classes dynamically over time. She can also outsource the orders upon their arrival if the backlog of work is judged excessive. Her objective is to maximize her profit while satisfying the throughput-time constraints. Two important features of our formulation are the following: First, the customer orders of different types follow deterministic routes in the system. Second, there are no holding costs. Instead, congestion concerns are captured through the aforementioned throughput time constraints. Arguably, such constraints are more realistic and easier to estimate for the purpose of model calibration than the more commonly used formulation with holding costs that is analytically convenient \citep{Harrison_2024}.

Because the dynamic control problem is intractable in its original form, we resort to an approximation. We follow the
heavy-traffic approach pioneered by \cite{Harrison_1988_brownian} and derive the
approximating Brownian control problem. An important simplification that occurs in the heavy traffic limit is that one can replace the throughput time constraints by upper bounds on the sum of certain (scaled) queue lengths, as observed by \cite{Plambeck_Kumar_Harrison_2001}.

The approximating Brownian control problem is equivalent to a
lower-dimensional workload problem. In our setting, the equivalent workload problem is a
multi-dimensional control problem with two salient features: First, it
includes both drift and singular controls, inheriting them from the pricing,
outsourcing and scheduling controls in the original queueing system. Second, its state space is a bounded convex polyhedron. Although it is simpler than the original control problem, the workload problem
remains multi-dimensional and does not admit a closed-form solution. Thus, we
develop a neural network-based method to solve this control
problem numerically and obtain an effective control policy. The resulting control policy has two distinct features: First, the pushing controls (i.e., singular controls) are active only on the boundary of the state space and they partition it with respect to different active pushing controls. The pushing controls correspond to either idling the servers or outsourcing activities. Second, in the interior of the state space, the control is a state-dependent drift rate control that corresponds to the system manager's dynamic pricing policy.

We interpret the solution of the workload problem in the context of the original queueing network by proposing a policy for it. We illustrate our solution approach and test its effectiveness in two examples: The Rybko-Stolyar network \citep{Rybko_Stolyar_1992, Maglaras_2000_discrete} and the extended criss-cross network. The latter builds on the criss-cross network and provides us with a set of high-dimensional test problems \citep{Harrison_Wein_1989_scheduling, Martins_Shreve_Soner_1996_heavy, Budhiraja_Ross_2008_optimal, Budhiraja_Liu_Saha_2017_construction, Kumar_Muthuraman_2004}.

To solve the multi-dimensional dynamic control problem, we
develop a computational method that relies heavily on deep neural network
technology. Two important antecedents of our work are
\cite{Ata_Harrison_Si_2024_drift, Ata_Harrison_Si_2024_singular}.
\cite{Ata_Harrison_Si_2024_drift} builds on the seminal work of \cite{Han_Jentzen_E_2018} to develop neural network-based method for solving
drift control problems for reflected Brownian motion. \cite{Ata_Harrison_Si_2024_singular} extends it to
solve singular control problems, by approximating singular controls using
drift controls, and shows its effectiveness in queueing applications. Our computational method builds on \cite{Ata_Harrison_Si_2024_drift, Ata_Harrison_Si_2024_singular}, and extends them to the setting with bounded convex polyhedral state space, which is not covered by the positive orthant considered in
\cite{Ata_Harrison_Si_2024_drift, Ata_Harrison_Si_2024_singular}.

\section{Literature Review\label{sec: literature}}%

Our work relates to several streams of literature. The first one uses the heavy traffic approximations to study dynamic control of stochastic networks. Pioneered by \cite{Harrison_1988_brownian, Harrison_2000_brownian, Harrison_2003_broader}, the dynamic control problems are approximated by control problems involving Brownian motions to obtain more tractable solutions in this literature. An important group of papers in this literature study the criss-cross network in the heavy traffic asymptotic regime. \cite{Harrison_Wein_1989_scheduling} study the criss-cross network for a set of cost parameters under which the problem admits a pathwise solution. 
\cite{Martins_Shreve_Soner_1996_heavy} revisits the problem and identifies a total of five cases in terms of possible cost parameters of the problem. The authors label them as Case \RN{1} and Cases \RN{2}A--\RN{2}D. Case \RN{1} is studied by \cite{Chen_Yang_Yao_1994}. \cite{Martins_Shreve_Soner_1996_heavy} prove the asymptotic optimality of the solution proposed by \cite{Harrison_Wein_1989_scheduling} for Case \RN{2}A, and they provide conjectures for the other cases. Two of these cases (Cases \RN{2}B and \RN{2}C) were rigorously justified in later papers by \cite{Budhiraja_Ross_2008_optimal} and \cite{Budhiraja_Liu_Saha_2017_construction}. The criss-cross network constitutes the basis for our second test problem, whereas our first test problem is based on the Rybko-Stolyar network; see \cite{Rybko_Stolyar_1992}.

Among papers studying dynamic control of queueing networks in the heavy traffic regime, ours is closely related to those involving throughput time constraints. \cite{Plambeck_Kumar_Harrison_2001} incorporates such throughput-time constraints and replaces them with upper bounds on queue lengths; also see \cite{Maglaras_Van_Mieghem_2005}, \cite{Ata_2006_thin}, \cite{Celik_Maglaras_2008} and \cite{Rubino_Ata_2009}. Other related papers include those studying dynamic pricing and outsourcing controls.

Many researchers have studied models concerning pricing, outsourcing or admission control for queueing systems; see for example, \cite{Low_1974_optimal}, \cite{Stidham_1985_optimal, Stidham_2004_pricing}, \cite{Lewis_Ayhan_Foley_1999_bias}, \cite{Carr_Duenyas_2000_optimal}, \cite{Yoon_Lewis_2004_optimal}, \cite{Ziya_Ayhan_Foley_2006_optimal}, \cite{Maoui_Ayhan_Foley_2009_optimal}, \cite{Zhang_Ayhan_2012_optimal}. The closest work to ours is \cite{Ata_Barjesteh_2023} that studies dynamic pricing and outsourcing in a multiclass, single-server make-to-stock manufacturing system. Similar to our paper, their pricing (resp. outsourcing) control  becomes drift control within state space (resp. singular control on reflecting barriers) in limiting formulation. Our proposed pricing policy is similar to the one in \cite{Ata_Barjesteh_2023}, whose limiting formulation is one-dimensional, whereas our multiclass queueing network setting leads to a multi-dimensional formulation.


Another related stream of literature develops computational methods for dynamic control of queueing networks. The most closely related papers to ours are \cite{Ata_Harrison_Si_2024_drift, Ata_Harrison_Si_2024_singular}. The authors develop neural network-based computational methods for solving high-dimensional drift-rate and singular control problems. To be specific, \cite{Ata_Harrison_Si_2024_drift} develops a neural network-based method for solving
drift control problems for reflected Brownian motion, and  \cite{Ata_Harrison_Si_2024_singular} extends it to
solve singular control problems, by approximating singular controls using
drift controls, and shows its effectiveness in queueing applications. We build on \cite{Ata_Harrison_Si_2024_drift, Ata_Harrison_Si_2024_singular} by extending their work to allow a bounded convex polyhedron as the state space.

Other closely related papers also develop neural network-based computational method to solve control problems. Motivated by call center applications, \cite{Ata_Kasikaralar_2023_dynamic} focuses on limiting control problems in Halfin-Whitt heavy traffic regime, and develops a computational method for finite-horizon formulation with non-stationary data; also see \cite{Ata_Kasikaralar_2026_parallel}. \cite{Ata_Zhou_2024} solves a sequential vehicle routing problem motivated by an eviction enforcement application following a similar approach. Similarly, \cite{Ata_vanEekelen_Zhong_2024} proposes a computational method for solving the stochastic joint replenishment problem in high dimensions by exploiting a connection among multi-dimensional impulse control problems, backward stochastic differential equations and the stochastic target problem. Those researchers build on the literature started by the seminal paper \cite{Han_Jentzen_E_2018}, that focuses on solving (semilinear) partial differential equations using neural networks, relying on a connection between (semilinear) partial differential equations and backward stochastic differential equations; see \cite{E_Han_Jentzen_2021_algorithms}, \cite{Beck_Hutzenthaler_Jentzen_Kuckuck_2023}, \cite{Chessari_Kawai_2023} and \cite{Han_Jentzen_E_2025} for surveys of that literature.

Prior to neural network-based methods for solving Hamilton-Jacobi-Bellman (HJB) equations, solution to multi-dimensional stochastic control problems was usually restricted to low-dimensional case. Two numerical methods for solving singular control problems are proposed in \cite{Kushner_Martins_1991} and \cite{Kumar_Muthuraman_2004}. \cite{Kushner_Martins_1991} approximates the singular control problems by control problems of discrete-time Markov chain that is readily solvable, which is an extension of Markov chain approximation method in \cite{Kushner_1990_numerical} for stochastic control problems. \cite{Kumar_Muthuraman_2004} combines a finite element method with policy iteration to solve their HJB equation that is based on \cite{Taksar_1997_infinite}.


Drift and singular control problems arise in various manufacturing applications; see for example \cite{Ata_2006_thin}, \cite{Celik_Maglaras_2008}, \cite{Ata_Olsen_2009}, \cite{Rubino_Ata_2009}, \cite{OrmeciMatoglu_VandeVate_2011_drift} and \cite{OrmeciMatoglu_VandeVate_Wang_2015_solving} for related models of make-to-order manufacturing system, and \cite{Wein_1992_dynamic}, \cite{Veatch_Wein_1996_scheduling}, \cite{Caldentey_Wein_2006}, \cite{Ata_Barjesteh_2023} for related models of make-to-stock systems. Also see \cite{Dai_Yao_2013_brownian} for a summary of other applications. Other applications of drift and singular control problems include power control in wireless communication \citep{Ata_Harrison_Shepp_2005}, ride hailing \citep{Ata_Barjesteh_Kumar_2020, Alwan_Ata_Zhou_2024_queueing}, volunteer capacity management for nonprofits \citep{Ata_Lee_Sonmez_2019_dynamic, Ata_Tongarlak_Lee_Field_2024_dynamic}, flexible staffing in call center \citep{Sun_Liu_2025_expanding}, and drone delivery systems \citep{Liu_Sun_2022_energy}.

\section{Model\label{sec: model}}

We consider a make-to-order manufacturing system selling $K$ different products to price-sensitive customers. We model this system as a multiclass queueing network, serving $K$ customer types. In our model, type $k$ customer orders follow a pre-determined route through a sequence of buffers, denoted by $(k,1),\ldots,(k,n_{k})$, denoting their manufacturing process ($k = 1,\dots, K$). In other words, they go through $n_k$ production stages. During stage $l$ ($l = 1,\dots,n_k$), they are held in buffer $(k,l)$ that is dedicated to holding them. Thus, a total of $m = \sum_{k=1}^{K} n_k$ buffers hold jobs of various types that are in different stages of production. In what follows, we use the terms buffer and class interchangeably. To repeat, the class of a job corresponds to the combination of the product type and the stage of production it is in currently.

We assume jobs of various classes are served by a total of $p$ servers. We let $s(k,l)$ denote the server that serves class $(k,l)$ jobs, whose processing time has a general distribution with mean $m_{k,l}$ and squared coefficient-of-variation $v_{k,l}^2$ for $k = 1,\dots,K$ and $l = 1,\dots,n_k$; we let $\mu_{k,l} = 1/m_{k,l}$ denote the corresponding service rate. Also, we let $S_{k,l}(t)$ denote the total number of class $(k,l)$ jobs served if server $s(k,l)$ works on class $(k,l)$ jobs continuously during $[0,t]$. There is a many-to-one assignment of job classes to servers. To be specific, we let $\mathcal{C}_i$ denote the set of classes or buffers served by server $i$ ($i = 1,\dots,p$). That is, $\mathcal{C}_i = \{(k,l): s(k,l)= i\}$ for $i = 1,\dots, p$.

It is worth mentioning that the model described so far is a special class of multiclass queueing networks as defined in \cite{Harrison_1988_brownian}. The special structure we impose is that the probability routing matrix of \cite{Harrison_1988_brownian}, denoted by $P$ there, consists solely of zeros and ones. That is, we assume deterministic routing. However, a distinguishing feature of our model is the throughput time constraints: Associated with each product type is a throughput time or a delay constraint, which allows us to model congestion concerns in the system. More will be said about the model feature later in this section.

The system manager dynamically chooses price vectors $p(t) = (p_k(t)) \in \mathcal{P}$ for $t \geq 0$, where $p_k(t)$ is the price of type $k$ products at time $t$ $(k = 1,\dots,K$) and $\mathcal{P} \subset \mathbb{R}_+^K$ is the set of feasible price vectors. The price sensitivity of customers is modeled by a nonnegative demand function $\Lambda$ that maps the price vector $p$ to an instantaneous demand rate vector $\lambda$, i.e., $\lambda(t) = \Lambda(p(t))$, where $\Lambda:\mathcal{P} \to \mathcal{L} \subset \mathbb{R}_+^K$ and $\mathcal{L} =\{\Lambda(p):p\in \mathcal{P}\}$ is the set of feasible demand-rate vectors.

Given the system manager's dynamic pricing policy $\{p(t): t\geq 0\}$ and the resulting demand-rate process $\lambda = \{\lambda(t): t\geq 0\}$, we model the cumulative demand process as a nonhomogenous Poisson process. More specifically, demand for product $k$ arrives according to a nonhomogenous Poisson process with instantaneous rate $\lambda_k(t)$ for $t \geq 0$ and $k = 1,\dots,K$. That is, the cumulative demand for product $k$ up to time $t$ is \[
N_{k}\left(  \int_{0}^{t}\lambda_{k}(s)ds\right),\quad t \geq 0  ,
\]
where $N_{k}$ is a unit rate Poisson process ($k = 1,\dots,K$).

Following \cite{Celik_Maglaras_2008} and \cite{Ata_Barjesteh_2023}, we assume the following regularity conditions on the demand function $\Lambda$. 
\begin{assumption}
\label{assumption: demand_function}The following hold:
\begin{itemize}
    \item[i)] $\Lambda$ is bounded, continuously differentiable, and bijective; its inverse is denoted by $\Lambda^{-1}:\mathcal{L} \to \mathcal{P}$;
    \item[ii)] $\partial \Lambda_k(p)/ \partial p_k < 0$ for $p \in \mathcal{P}$, $k = 1,\dots, K$;
    \item[iii)] $\mathcal{L}$ is convex.
\end{itemize}
\end{assumption}

For mathematical convenience, we view the instantaneous demand rate process $\lambda = \{\lambda(t): t\geq 0\}$ as the system manager's pricing control, from which the price process $p = \{p(t):t\geq 0\}$ can be computed via $p(t) = \Lambda^{-1} (\lambda(t))$ for $t \geq 0$. As a preliminary to expressing the system manager's objective function, we denote the variable cost of manufacturing for product $k$ by $\delta_k$ ($k = 1,\dots,K$) and let $\delta = (\delta_k)$ denote the associated cost rate vector. Then we define the profit rate function $\Pi$ as a function of demand rate vector as follows: 
\begin{equation}
\Pi(x)=x \cdot (\Lambda^{-1}(x)-\delta),\quad x\in\mathcal{L}.\label{revenue_def}
\end{equation}
We also make the following regularity assumption on the profit rate function, which implies $\Pi(\cdot)$ is strictly concave.
\begin{assumption}
\label{assumption: profit_function} The profit rate function $\Pi$ is twice continuously differentiable and its Hessian is negative definite.
\end{assumption}

To facilitate the analysis that follows, we consider the following static planning problem that ignores the randomness in the system: Find the demand rate vector $\lambda$ so as to
\begin{equation}
\operatorname*{maximize}\quad\Pi(\lambda)\text{\qquad subject to\qquad}%
\lambda\in\mathcal{L}.\label{static_planning}
\end{equation}
\begin{assumption}
\label{assumption: static_planning} The static planning problem (\ref{static_planning}) has a unique optimal solution $\lambda^*$ that is in the interior of the set $\mathcal{L}$ of the feasible demand rate vectors.
\end{assumption}

Recall that the system manager strives to impose an upper bound, say $d_k$, on the throughput time, or total delay, experienced by orders of product type $k$ ($k = 1,\dots, K$). Of course, this formulation is not really meaningful in a conventional queueing model, because the throughput times are random variables. However, the formulation becomes meaningful in an asymptotic sense. To be specific, one can show how the law of large numbers implies an equivalence between (scaled) bounds on the number of jobs in the system and (scaled) bounds on throughput times as one approaches the heavy traffic limit; see for example \cite{Plambeck_Kumar_Harrison_2001}. Namely, throughput time constraints can be met with probability one asymptotically because the delay bounds correspond to upper bounds on buffer contents in the heavy traffic limit. Thus, we replace the throughput time bounds for type $k$ products by a bound $q_k$ on the total number of type $k$ orders waiting or being worked on in the system, where we set $q_k = \lambda_k^*d_k$ for $k = 1,\dots,K$, and $\lambda_k^*$ is the nominal arrival rate of type $k$ products; see Assumption \ref{assumption: static_planning}.

To meet the throughput time constraints, in addition to dynamic pricing and scheduling decisions, the system manager can also outsource product orders upon arrival. We let $O_k(t)$ denote the number of type $k$ orders outsourced until time $t$ ($k = 1,\dots,K$). She incurs a cost of $\nu_k$ (in excess of the marginal cost $\delta_k$ of production) per order of product type $k$ that is outsourced. We denote excess outsourcing cost vector by $\nu = (\nu_k)$. We let $O(t) = (O_k (t))$ so that $O = \{O(t): t\geq 0\}$ denotes the system manager's outsourcing policy.

The system manager's dynamic scheduling decisions are modeled through an $m$-dimensional cumulative allocation process, denoted by $T(t) = (T_{k,l}(t))$, where $T_{k,l}(t)$ denotes the cumulative amount of time server $s(k,l)$ devotes to serving buffer $(k,l)$ during $[0,t]$. The system manager's control is then denoted by $(O,T,\lambda)$, where the process $O$ describes the outsourcing decisions, the allocation process $T$ describes the scheduling decisions and the demand rate process $\lambda$ captures the dynamic pricing decisions.

We let $Q_{k,l}(t)$ denote the number of jobs in buffer $(k,l)$ at time $t$ ($k = 1,\dots,K$ and $l = 1,\dots,n_k$). Then $Q(t) = (Q_{k,l}(t))$ denotes the $m$-dimensional queue length vector at time $t$ and the queue length process is denoted by $Q = \{Q(t):t \geq 0\}$. Its dynamics under a control $(O,T,\lambda)$ is given as follows: For $t\geq 0$ and $k = 1,\dots,K$,
\begin{align}
Q_{k,1}(t)  &  =Q_{k,1}(0) + N_{k}\left(  \int_{0}^{t}\lambda_{k}(s)ds\right)
-S_{k,1}(T_{k,1}(t))-O_{k}(t),\label{Q_k_1}\\
Q_{k,l}(t)  &  =Q_{k,l}(0) + S_{k,l-1}(T_{k,l-1}(t))-S_{k,l}(T_{k,l}(t)),\quad
l=2,\ldots,n_{k}. \label{Q_k_l}%
\end{align}
A control $(O,T,\lambda)$ is admissible if it is nonanticipating and the following hold:
\begin{align}
&O(\cdot),T(\cdot)\text{ are nondecreasing with }O(0)=T(0) = 0,\ \label{O_feasible}\\
&\sum_{(k,l)\in \mathcal{C}_i}\left(T_{k,l}(t) - T_{k,l}(s)\right) \leq t-s, \ 0\leq s < t < \infty, \ i = 1,\dots,p,\label{T_absolute_continuous}\\
&\lambda(t)\in\mathcal{L}\text{ for }t\geq0, \label{lambda_feasible} \\
&Q(t) \geq 0,\ \sum_{l=1}^{n_k} Q_{k,l}(t) \leq q_k, \ k = 1,\dots,K,\ t\geq 0.\label{Q_nonnegative_upper_bound}
\end{align}

Given an admissible control $(O,T,\lambda)$, we define the cumulative idleness process for each server as follows:
\begin{equation}
I_{i}(t)=t-\sum_{(k,l)\in \mathcal{C}_{i}}T_{k,l}(t),\quad t\geq0, \quad i = 1,\dots,p, \label{I_i}%
\end{equation}
and $I(t) = (I_i(t))$ is the $p$-dimensional cumulative idleness vector at time $t$; and the idleness process is denoted by $I = \{I(t): t\geq 0\}$. Note that it follows from (\ref{O_feasible})--(\ref{T_absolute_continuous}) and (\ref{I_i}) that $I(\cdot)$ is continuous and nondecreasing with $I(0) = 0$. 

Recall that outsourcing a type $k$ product order incurs cost $\nu_k$ in excess of its marginal production cost $\delta_k$; so the total outsourcing cost is $\delta_k + \nu_k$ ($k = 1,\dots,K$). Under an admissible policy $(O,T,\lambda)$, we model the system manager's cumulative profit, denoted by $R(t)$, as follows:
\begin{equation}
R(t)=\int_{0}^{t}\Pi(\lambda(s))ds-\nu\cdot O(t). \label{R_t}%
\end{equation}
The first term on the right hand side of (\ref{R_t}) characterizes the profit
obtained by selling different products, and the second term represents the outsourcing cost. The system manager aims to
choose the optimal policy $(O,T,\lambda)$ to%
\begin{equation}
\operatorname*{maximize}\quad\mathbb{E}\left[  \int_{0}^{\infty}%
e^{-rt}dR(t)\right]  \quad\text{subject to}\quad\text{(\ref{Q_k_1}%
)--(\ref{R_t}),} \label{objective}%
\end{equation}
where $r$ is the interest rate for discounting.

This formulation is not tractable. Therefore, we derive an approximation, the Brownian control problem, in the next section.

\section{Approximating Brownian Control Problem\label{sec: BCP}}

To derive the approximating Brownian control problem, we consider a sequence of systems indexed by the system parameter $n$ and study it as $n$ gets large. We attach superscript $n$ to various quantities of interest in the $n$th system. Our approximation relies on the following heavy traffic assumption.

\begin{assumption}
[Heavy-traffic Assumption]\label{assumption: heavy_traffic}For $n\geq 1$, the following hold:
\begin{align}
\Lambda^{n}(x)  =n\Lambda(x)\text{ for }x\in\mathcal{P},%
\quad \mu^{n}    =n\mu+\sqrt{n}\eta.\label{assumption_Lambda_n_mu_n}
\end{align}
Moreover, the unique optimal solution $\lambda^*$ in Assumption \ref{assumption: static_planning} satisfies
\begin{align*}
    \sum_{(k,l)\in \mathcal{C}_i} \frac{\lambda_k^*}{\mu_{k,l}} = 1,\ i = 1,\dots,p.
\end{align*}
\end{assumption}

The heavy traffic assumption implies that the traffic intensity for each server is near one for large $n$; and it tends to one as $n\to \infty$. Also, it follows from Equations (\ref{revenue_def}) and (\ref{assumption_Lambda_n_mu_n}) that
\begin{equation}
\Pi^{n}(nx)=n\Pi(x). \label{property_revenue}%
\end{equation}
If there were no randomness, i.e., in the fluid model, it would be optimal to set the demand rate as $n\lambda^*$ at all times in the $n$th system. However, due to randomness, the system manager makes adjustments that are of order $\sqrt{n}$, as observed in \cite{Ata_Barjesteh_2023}. Thus, we assume the following form for the instantaneous demand rate vector $\lambda^n(\cdot)$ for the $n$th system:
\begin{equation}
  \lambda^{n}(t)=n\lambda^{\ast}+\sqrt{n}\zeta(t),\  t\geq 0, \label{lambda_assumption} 
\end{equation}
where $\zeta(\cdot)$ process corresponds to the second-order corrections. In essence, it can be viewed as the system manager's pricing control in the diffusion scale.

In the $n$th system, the demand and system capacity are of order $n$. Thus, we expect the queue length and the outsourcing process to be of order $\sqrt{n}$. As such, we scale them accordingly and introduce the following diffusion-scaled processes:
\begin{equation}
Z^{n}(t)=\frac{Q^{n}(t)}{\sqrt{n}}\ \  \text{and} \ \  U^{n}(t)=\frac{O^{n}(t)}{\sqrt{n}},\ t\geq 0.
\label{Z_n_U_n}%
\end{equation}
We let $x_{k,l}^* = \lambda_k^*/\mu_{k,l}$ denote the long-run average fraction of time server $s(k,l)$ should allocate to serving buffer $(k,l)$. Also let $x^* = (x_{k,l}^*)$ denote the corresponding vector of allocation rates. As argued in \cite{Harrison_1988_brownian}, the policies that are worthy of consideration must satisfy $T^n(t) \approx x^* t$. To be more specific, we assume
\begin{equation}
T^n(t) = x^* t - \frac{1}{\sqrt{n}} Y^{n}(t), \ t\geq 0.
\end{equation}
Or equivalently, we write 
\begin{equation}
Y^{n}(t) = \sqrt{n} (x^* t - T^n(t)) , \ t\geq 0,\label{scaled_scheduling_process}%
\end{equation}
where $Y^{n}(\cdot)$ is the centered and scaled allocation process. Similarly, we define the scaled idleness process by 
\begin{equation}
L^{n}(t)=\sqrt{n}I^n(t), \ t\geq 0. \label{scaled_idle_process}%
\end{equation}
For large $n$, as mentioned earlier, we expect queue lengths to be of order $\sqrt{n}$ and delays to be of order $1/\sqrt{n}$. Thus, we scale the upper bounds $d_k^n$ and $q_k^n$ on throughput times and queue lengths of type $k$ customers, respectively, as follows:
\begin{equation}
d_k^n = \frac{\ell_k}{\sqrt{n}} \ \ \text{and}\ \ q_k^n = \sqrt{n} \beta_k,\label{scaled_bound}
\end{equation}
where $\beta_k = \lambda_k^* \ell_k$ for $k = 1,\dots,K$. Also, we assume that the outsourcing cost $\nu^{n}$ varies with $n$ as follows: $\nu^{n}=\alpha/\sqrt{n}$, where $\alpha = (\alpha_{k})$ denotes (scaled) outsourcing cost vector.

Next, we center the cumulative profit process $R^n(t)$ and focus on its deviation from the idealized upper bound $n\Pi(\lambda^*)t$ in the Brownian control problem. That is, we define the cumulative cost process
\begin{equation}
C^{n}(t)=n\Pi(\lambda^{\ast})t-R^{n}(t),\quad t \geq 0. \label{cumulative_cost_n}%
\end{equation}
Lastly, we assume the interest rate $r^n$ for discounting in the $n$th system satisfies $r^n = \tilde{r}$. Appendix \ref{appendix: derivation_BCP} derives the approximating Brownian control problem by passing to the limit formally as $n\to \infty$ in the heavy traffic asymptotic regime. In the heavy-traffic limit, the
processes $C^{n}$, $Z^{n}$, $Y^{n}$, $L^{n}$, and $U^{n}$
are replaced with their limits $C$, $Z$, $Y$, $L%
$, and $U$, which jointly satisfy the following:
\begin{align}
C(t)  &  =\int_{0}^{t}\zeta(s)^{\prime}H\zeta(s)ds + \alpha
\cdot U(t),\label{BCP_xi}\\
Z_{k,1}(t)  &  =Z_{k,1}(0) + X_{k,1}(t)+\int_{0}^{t}\zeta_{k}(s)ds+\mu_{k,1}%
Y_{k,1}(t)-U_{k}(t),\ \ k = 1,\dots,K,\label{BCP_Z_k_1}\\
Z_{k,l}(t)  &  =Z_{k,l}(0) + X_{k,l}(t)-\mu_{k,l-1}Y_{k,l-1}(t)+\mu
_{k,l}Y_{k,l}(t),\ \ l=2,\ldots,n_{k},\ \  k = 1,\dots,K,\label{BCP_Z_k_l}\\
L_{i}(t)  &  =\sum_{(k,l)\in \mathcal{C}_{i}}Y_{k,l}(t),
\label{BCP_I_i}\ \  i = 1,\dots,p.%
\end{align}
Here, the matrix $H$ is determined by the Hessian of the profit rate function $\Pi$ as follows:
\begin{align}
    H=-\frac{1}{2}\nabla^{2}\Pi(\lambda^{\ast}).\label{Hessian}
\end{align}
In addition, $X_k = (X_{k,l})$ is an $n_k$-dimensional Brownian motion with drift vector $\varphi_k$ and covariance matrix $\Sigma_k$. Equations (\ref{BCP_BM_drift_1})--(\ref{BCP_BM_variance}) in Appendix \ref{appendix: notation_derivation_BCP} express $\varphi_k$ and $\Sigma_k$ in terms of the problem primitives. The processes $X_1,\dots,X_K$ are mutually independent.

The cumulative cost process $C(t)$ of the Brownian control problem involves two terms: The integral term on the right-hand side of Equation (\ref{BCP_xi}) represents the (scaled) lost profit whereas the term $\alpha\cdot U(t)$ is the (scaled) cost of outsourcing. Equations (\ref{BCP_Z_k_1})--(\ref{BCP_Z_k_l}) describe how the (scaled) queue length processes evolve, whereas Equation (\ref{BCP_I_i}) expresses the cumulative idleness process in terms of the centered and scaled allocation control $Y(t)$. Furthermore, it follows from Equation (\ref{Q_nonnegative_upper_bound}) that
\begin{equation}
Z(t) \geq 0,\ \sum_{l=1}^{n_{k}}Z_{k,l}(t)\leq\beta_{k},\ k = 1,\dots,K,\ t\geq 0.\label{BCP_queue_length_bound}%
\end{equation}
The idleness process $L$ and the outsourcing
process $U$ satisfy the following:%
\begin{align}
&L(\cdot),U(\cdot) \ \text{are continuous and non-decreasing with}\ L(0)=U(0)=0.
\label{BCP_I_U_nondecrease}
\end{align}

Then the approximating Brownian control problem can be stated as follows: Choose a non-anticipating
control $(U,Y,\zeta)$ to
\begin{equation}
\operatorname*{minimize}\quad\mathbb{E}\left[  \int_{0}^{\infty}e^{-\tilde{r} t}%
dC(t)\right]  \quad\text{subject to}\quad\text{(\ref{BCP_xi}%
)--(\ref{BCP_I_U_nondecrease}).} \label{BCP}%
\end{equation}

In the next section, we derive an equivalent yet lower-dimensional formulation, i.e., the workload problem. In the analysis that follows we solve the workload problem and interpret that solution in the context of the original queueing network model.

\section{Workload Problem\label{sec: EWF}}

To state the workload problem, we first define the $p$-dimensional workload process $W = \{W(t): t\geq 0\}$ as follows:
\begin{equation}
W_{i}(t)=\sum_{(k,l)\in \mathcal{C}_{i}}m_{k,l}\sum_{j=1}^{l} Z_{k,j}(t), \ t \geq 0, \ i = 1,\dots,p.
\label{workload_process}%
\end{equation}
Here, $W_i(t)$ represents the expected total work for server $i$ that is embodied in all jobs that are currently in the system at time $t \geq 0$. Note that the inner summation on the right-hand side of (\ref{workload_process}) corresponds to the total number of type $k$ jobs that are either with server $i$ or with the upstream servers currently and will visit server $i$ later.

Combining Equation (\ref{workload_process}) with Equations (\ref{BCP_Z_k_1})--(\ref{BCP_I_i}), it is straightforward to derive the evolution of the workload process given control $(U,Y,\zeta)$:
\begin{equation}
W_{i}(t)=W_{i}(0) + \xi_{i}(t)+\int_{0}^{t}\theta_{i}(s)ds+L%
_{i}(t)-\sum_{(k,l)\in \mathcal{C}_{i}}m_{k,l}U_{k}(t), \ t\geq 0,\ i = 1,\dots,p.
\label{workload_process_diffusion}%
\end{equation}
The process $\xi_i = \{\xi_i(t):t\geq 0\}$ is a Brownian motion that is defined as follows:
\begin{equation}
\xi_{i}(t)=\sum_{(k,l)\in \mathcal{C}_{i}}m_{k,l}\sum_{j=1}^l X_{k,j}(t),\ t \geq 0, \ i = 1,\dots,p. \label{workload_BM}%
\end{equation}
Then $\xi = (\xi_i)$ is a $p$-dimensional Brownian motion with drift vector $\gamma$ and covariance matrix $\Gamma$; see Equation (\ref{gamma_Gamma_def}) in Appendix \ref{appendix: notation_derivation_BCP} for formulae of $\gamma$ and $\Gamma$.
The pricing control $\zeta$ translates to a $p$-dimensional drift-rate control process $\theta = \{\theta(t):t\geq 0\}$ defined as follows:
\begin{equation}
\theta_{i}(t)=\sum_{(k,l)\in \mathcal{C}_{i}}m_{k,l}\zeta_{k}(t), \ t \geq 0,\ i = 1,\dots,p.
\label{workload_drift}%
\end{equation}
We define the set of admissible drift rates for the workload problem, denoted by $\Theta$, as follows:
\begin{align}
    \Theta = \{\theta \in \mathbb{R}^p:\ \exists \zeta\in \mathbb{R}^K\ \text{such that}\ \theta_i=\sum_{(k,l)\in \mathcal{C}_{i}}m_{k,l}\zeta_{k}\ \text{for}\ i = 1,\dots,p\}. \label{workload_Theta}
\end{align}
Then we require that $\theta(t) \in \Theta$ for $t\geq 0$. Using Equation (\ref{BCP_queue_length_bound}), one can define the state space $\mathcal{Z}$ of the (scaled) queue length process as follows:
\begin{equation}
\mathcal{Z}=\left\{  z\in \mathbb{R}_+^m: \sum_{l=1}^{n_{k}}z_{k,l}\leq\beta_{k},\ k = 1,\dots,K\right\}  ,
\label{Z_state_space}%
\end{equation}
which facilitates the definition of the state space $\mathcal{W}$ of the workload process $W$ as follows:
\begin{equation}
\mathcal{W}=\left\{  w\in \mathbb{R}_+^p:\exists z\in\mathcal{Z}\text{ such that }w_{i}%
=\sum_{(k,l)\in \mathcal{C}_{i}}m_{k,l}\sum_{j=1}^l z_{k,j},\ i=1,\ldots
,p\right\} . \label{W_state_space}%
\end{equation}
We write the state space constraint as 
\begin{align}
    W(t)\in \mathcal{W}\ \text{for} \ t\geq 0.\label{W_state_space_constraint}
\end{align}
Lastly, we define the cost function $c(x)$ associated with the chosen drift rate $\theta(t) = x$ as follows:
\begin{align}
c(x)  &  =\min_{\zeta} \left\{  \zeta^{\prime}H\zeta:\ \sum_{(k,l)\in \mathcal{C}_{i}}m_{k,l}\zeta_{k} = x_i,\ i = 1,\dots,p\right\}  ,\ x \in \Theta.\label{c_x}
\end{align} 
The next lemma provides closed-form expressions for the cost function $c(\cdot)$ and the minimizer $\zeta^*$ in its definition (\ref{c_x}). To facilitate its statement, we define the $p\times K$ dimensional workload matrix $\tilde{M} = (\tilde{M}_{ik})$ as follows:
\begin{align}
    \tilde{M}_{ik} = \sum_{l:(k,l)\in\mathcal{C}_i} m_{k,l} \ \text{for} \ i = 1,\dots, p\ \text{and}\  k = 1,\dots,K,\label{c_x_direction}
\end{align}
where $\tilde{M}_{ik}$ denotes the work content for server $i$ embodied in a type $k$ job arriving to the system. For a proof of Lemma \ref{lemma:c_x}, see Appendix \ref{appendix: proof_lemma_c_x}.

\begin{lemma}\label{lemma:c_x}
For $x\in \Theta$, we have that $c(x)=x^{\prime}(\tilde{M}H^{-1}\tilde{M}^{\prime})^{\dagger}x$ and $\zeta^{\ast
}(x)=H^{-1}\tilde{M}^{\prime}(\tilde{M}H^{-1}\tilde{M}^{\prime})^{\dagger}x$, where the superscript $\dagger$ denotes pseudo-inverse.
\end{lemma}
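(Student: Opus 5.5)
The plan is to treat (\ref{c_x}) as an equality-constrained convex quadratic program in $\zeta\in\mathbb{R}^K$. First we rewrite its constraint in matrix form. By the definition (\ref{c_x_direction}) of $\tilde M$, $\sum_{(k,l)\in\mathcal{C}_i} m_{k,l}\zeta_k=\sum_k \tilde M_{ik}\zeta_k$, so the problem reads
\begin{align*}
\min_\zeta\ \zeta' H\zeta \quad\text{subject to}\quad \tilde M\zeta = x.
\end{align*}
By Assumption \ref{assumption: profit_function} and (\ref{Hessian}), $H$ is symmetric positive definite. Hence $H^{-1}$ exists, and $A:=\tilde M H^{-1}\tilde M'$ is symmetric positive semidefinite. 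Since $x\in\Theta$, the feasible set is nonempty, because $\Theta$ is exactly the range of $\tilde M$.

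Next we guess the minimizer through Lagrange multipliers. Stationarity of $\zeta'H\zeta-2\nu'(\tilde M\zeta-x)$ gives $\zeta=H^{-1}\tilde M'\nu$, and feasibility requires $A\nu=x$. The natural candidate is $\nu=A^\dagger x$, which yields $\zeta^*=H^{-1}\tilde M'A^\dagger x$. The main point to check is that this $\zeta^*$ is actually feasible when $A$ is singular, which happens if $\tilde M$ lacks full row rank. To handle this, we show $\mathrm{range}(A)=\mathrm{range}(\tilde M)$:
\begin{itemize}
\item $\mathrm{range}(A)\subseteq\mathrm{range}(\tilde M)$ is trivial.
\item Both ranges have the same orthogonal complement, since $A y=0$ implies $y'\tilde M H^{-1}\tilde M'y=0$, hence $\tilde M'y=0$ by positive definiteness of $H^{-1}$.
\end{itemize}
So $x\in\Theta=\mathrm{range}(A)$, and because $AA^\dagger$ is the orthogonal projector onto $\mathrm{range}(A)$, we get $\tilde M\zeta^*=AA^\dagger x=x$.

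Optimality we verify directly rather than through multiplier theory. For any feasible $\zeta$, write $\zeta=\zeta^*+d$ with $\tilde M d=0$. Then
\begin{align*}
\zeta'H\zeta=\zeta^{*\prime}H\zeta^*+2d'\tilde M'A^\dagger x+d'Hd\ge \zeta^{*\prime}H\zeta^*,
\end{align*}
since $d'\tilde M'=(\tilde M d)'=0$ and $d'Hd\ge0$. Strict convexity makes the minimizer unique, with equality only when $d=0$.

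Finally we evaluate the cost. Using the Moore--Penrose identity $A^\dagger A A^\dagger=A^\dagger$, we get $c(x)=\zeta^{*\prime}H\zeta^*=x'A^\dagger \tilde M H^{-1}\tilde M' A^\dagger x=x'A^\dagger A A^\dagger x=x'A^\dagger x$, where the symmetry of $A^\dagger$ was used. This gives both claimed formulas. The only delicate step is the range identity that handles the singular case; everything else is routine.
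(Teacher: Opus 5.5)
Your proof is correct and follows the same route as the paper: a Lagrangian stationarity condition, followed by solving $\tilde M H^{-1}\tilde M'\bar\lambda = x$ with the pseudo-inverse. Your range identity $\mathrm{range}(\tilde M H^{-1}\tilde M')=\mathrm{range}(\tilde M)=\Theta$ and your direct optimality check via $\zeta=\zeta^*+d$ supply two steps the paper's appendix leaves implicit: that $x$ lies in the range of $\tilde M H^{-1}\tilde M'$, so the pseudo-inverse actually solves the linear system in the singular case, and that the stationary point is a minimizer.
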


\begin{remark}\label{remark: c_x}
   The pseudo-inverse $(\tilde{M}H^{-1}\tilde{M}^{\prime})^{\dagger}$ is needed because the positive semidefinite matrix $\tilde{M}H^{-1}\tilde{M}'$ need not be of full rank in general, e.g., when $p > K$. When it is of full rank, one can replace the pseudo-inverse with the inverse.
\end{remark}

The workload problem is then stated as follows: Given initial state $W(0) = w  \in \mathcal{W}$, choose an adapted control $(L,U,\theta)$ so as to
\begin{align}
&  \operatorname*{minimize}\quad J(w;L,U,\theta) = \mathbb{E}\left[  \int_{0}^{\infty
}e^{-\tilde{r} t}\left(c(\theta(t))dt+ \alpha\cdot dU(t)\right)\right]  \quad\label{EWF}\\
&  \text{subject to\quad(\ref{BCP_I_U_nondecrease}),
(\ref{workload_process_diffusion}) and (\ref{W_state_space_constraint}).}\nonumber
\end{align}
We then define its value function, denoted by $V$, as follows:
\begin{equation}
    V(w) = \inf_{L,U,\theta} J(w;L,U,\theta),\ w\in \mathcal{W}.\label{value_function}
\end{equation}
Proposition \ref{proposition: equivalent_BCP_EWF} establishes an equivalence between the Brownian control problem (\ref{BCP}) and the workload problem (\ref{EWF}); see Appendix \ref{appendix: proof_equivalent_BCP_EWF} for its proof.

\begin{proposition}
\label{proposition: equivalent_BCP_EWF}
The workload problem (\ref{EWF}) is equivalent to the Brownian control problem (\ref{BCP}) in the following sense:
\begin{itemize}
    \item[i)] Suppose that $(U,Y,\zeta)$ is an admissible control for Brownian control problem (\ref{BCP}) with Brownian motion $X$ and state process $Z$. Then $(L,U,\theta)$ defined by (\ref{BCP_I_i}) and (\ref{workload_drift}) is an admissible control for the workload problem (\ref{EWF}) with Brownian motion $\xi$ given by (\ref{workload_BM}) and state process $W$ given by (\ref{workload_process}). Moreover, the cost of control $(L,U,\theta)$ for workload problem (\ref{EWF}) is less than or equal to the cost of control $(U,Y,\zeta)$ for the Brownian control problem (\ref{BCP}).
    \item[ii)] Suppose that $(L,U,\theta)$ is an admissible control for the workload problem (\ref{EWF}) with Brownian motion $\xi$ and state process $W$. Then, there exists a Brownian motion $X$ that satisfies (\ref{workload_BM}). Furthermore, there exists $(U,Y,\zeta)$ that is an admissible control for the Brownian control problem (\ref{BCP}) with Brownian motion $X$ and state process $Z$ satisfying (\ref{BCP_queue_length_bound}) and (\ref{workload_process}). These two policies have the same cost.
\end{itemize}
\end{proposition}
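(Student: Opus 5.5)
Part i) is a direct verification. Given an admissible $(U,Y,\zeta)$ for (\ref{BCP}), define $W$ by (\ref{workload_process}), $\xi$ by (\ref{workload_BM}), $\theta$ by (\ref{workload_drift}) and $L$ by (\ref{BCP_I_i}). For each server $i$, substitute (\ref{BCP_Z_k_1})--(\ref{BCP_Z_k_l}) into $\sum_{(k,l)\in\mathcal{C}_i} m_{k,l}\sum_{j\le l} Z_{k,j}$. For fixed $(k,l)$, the inner sum $\sum_{j\le l}Z_{k,j}$ telescopes the $Y$ terms: only $\mu_{k,l}Y_{k,l}$ survives, and $m_{k,l}\mu_{k,l}=1$ turns it into $Y_{k,l}$. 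The $\zeta_k$ and $U_k$ terms appear once, in buffer $(k,1)$. Summing over $\mathcal{C}_i$ then gives (\ref{workload_process_diffusion}), with $L_i=\sum_{\mathcal{C}_i}Y_{k,l}$.

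The remaining conditions are inherited:
\begin{itemize}
\item $L$ and $U$ satisfy (\ref{BCP_I_U_nondecrease}) by assumption.
\item $\theta(t)\in\Theta$ by construction.
\item $W(t)\in\mathcal{W}$ because $Z(t)\in\mathcal{Z}$ by (\ref{BCP_queue_length_bound}).
\item $(L,U,\theta)$ is nonanticipating because $Z$ and $Y$ are.
\end{itemize}
For the cost, $\zeta(t)$ is feasible in the minimization (\ref{c_x}) with $x=\theta(t)$, so $c(\theta(t))\le \zeta(t)'H\zeta(t)$ pathwise. Integrating against $e^{-\tilde r t}$, with the outsourcing term unchanged, gives the inequality.

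For part ii), start from an admissible $(L,U,\theta)$ and set $\zeta(t)=\zeta^*(\theta(t))$ from Lemma \ref{lemma:c_x}. We must check that this choice reproduces $\theta$ exactly, i.e.\ $\tilde M\zeta^*(x)=x$ for $x\in\Theta$. Since $\Theta=\mathrm{range}(\tilde M)=\mathrm{range}(\tilde M H^{-1}\tilde M')$, we have $\tilde M H^{-1}\tilde M'(\tilde M H^{-1}\tilde M')^{\dagger}$ equal to the projection onto that range. Hence the drift cost equals $c(\theta(t))$, and $\zeta$ is adapted because $\zeta^*$ is linear.

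For the Brownian motion, note that $\xi=A X$ for a linear map $A$ determined by (\ref{workload_BM}). The law of $X$ is specified as $K$ independent blocks with given $(\varphi_k,\Sigma_k)$, and $\xi$ is assumed to have drift $A\varphi$ and covariance $A\Sigma A'=\Gamma$. I would construct $X$ by enlarging the probability space if necessary: take $X=\mathbb{E}$-conditional Gaussian regression of $X$ on $\xi$ plus an independent Brownian motion for the residual, namely $X=\varphi t+\Sigma A'\Gamma^{\dagger}(\xi-\gamma t)+B$. Here $B$ is an independent BM with covariance $\Sigma-\Sigma A'\Gamma^\dagger A\Sigma$. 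One checks that $AX=\xi$ (the residual $AB$ has zero covariance) and that $X$ has the required law.

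Next choose the state. At each $t$, pick $Z(t)\in\mathcal{Z}$ with $W(t)=\mathcal{M}Z(t)$ via a measurable selection, where $\mathcal{M}$ is the linear map in (\ref{workload_process}). The most convenient choice is a continuous selection, for instance the minimizer of a strictly convex quadratic over the polytope $\{z\in\mathcal{Z}:\mathcal{M}z=w\}$. This minimizer is continuous in $w$ by polyhedral stability, so $Z$ is continuous and adapted.

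Then define $Y$ by solving (\ref{BCP_Z_k_1})--(\ref{BCP_Z_k_l}) recursively along each route. The $(k,1)$ equation determines $Y_{k,1}$ from $Z_{k,1},X_{k,1},\zeta_k,U_k$; each subsequent equation determines $Y_{k,l}$ from $Y_{k,l-1}$, which is possible since $\mu_{k,l}>0$. It remains to verify (\ref{BCP_I_i}), i.e.\ $\sum_{\mathcal{C}_i}Y_{k,l}=L_i$. This follows by running the telescoping identity of part i) backwards: the $W$ computed from $(Z,Y)$ satisfies (\ref{workload_process_diffusion}) with $\sum_{\mathcal{C}_i}Y_{k,l}$ in place of $L_i$, and comparing with the given dynamics yields equality. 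Costs coincide, since $\zeta'H\zeta=c(\theta)$ and $U$ is unchanged.

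The main obstacle is part ii). Constructing $X$ consistently with (\ref{workload_BM}) requires possibly enlarging the filtration while keeping the control nonanticipating. Choosing the lift $Z$ adaptively and regularly enough to be a legitimate state process also needs care; I would handle it with the quadratic-program selection just described.
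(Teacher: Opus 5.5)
Your proposal is correct and follows the same route as the paper's proof. For i) you set $W=MZ$ and $\theta=\tilde M\zeta$, define $L$ by (\ref{BCP_I_i}), and get the cost inequality from feasibility of $\zeta(t)$ in (\ref{c_x}); for ii) you lift $W(t)$ to some $Z(t)\in\mathcal{Z}$, take $\zeta=\zeta^*(\theta)$ from Lemma \ref{lemma:c_x}, and recover $Y$ from (\ref{BCP_Z_k_1})--(\ref{BCP_Z_k_l}). The paper only sketches this and defers the details to the cited literature, so your explicit checks usefully fill in what it leaves implicit: $\tilde M\zeta^*(x)=x$ on $\Theta$, the regression construction of $X$ on an enlarged space, the continuous adapted selection of $Z$, and the reverse telescoping that gives $\sum_{\mathcal{C}_i}Y_{k,l}=L_i$.
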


Although it is simpler than the Brownian control problem (\ref{BCP}), the workload problem (\ref{EWF}) remains multi-dimensional and does not admit a closed-form solution. In the next section, we develop a computational method to solve it that relies on the neural network technology.

\section{Computational Method\label{sec: computational_method}}

This section develops the computational method to solve the workload problem (\ref{EWF}). To solve the workload problem, that is a singular control problem, we approximate it by a drift control problem, following \cite{Ata_Harrison_Si_2024_singular}. As a preliminary to that analysis, we introduce some auxiliary notation in the next subsection. Sections \ref{sec: computational_method_approximation}--\ref{sec: computational_method_SDE} describe the computational method. Then we propose a policy for the workload problem in Section \ref{sec: computational_method_policy}.

\subsection{Auxiliary Definitions\label{sec: computational_method_notation}}

For the workload problem (\ref{EWF}), we define the $(p+K)$-dimensional process $\mathcal{U}$ as follows:
\begin{equation}
    \mathcal{U} = \begin{pmatrix}
        L\\
        U
    \end{pmatrix}\label{mathcal_U}
\end{equation}
that combines the $p$-dimensional idleness process $L$ and the $K$-dimensional outsourcing process $U$. 
The pushing directions associated with the singular control $\mathcal{U}$ are given by the columns of the $p\times (p+K)$ matrix $\tilde{G}$ given next
\begin{equation}
    \tilde{G} = \begin{pmatrix}
        I_p & -\tilde{M}
    \end{pmatrix},\label{G_bar}
\end{equation}
where $I_p$ is the $p$-dimensional identity matrix and is associated with the idleness process $L(\cdot)$, and $\tilde{M}$ is defined in Equation (\ref{c_x_direction}) and associated with the outsourcing process $U(\cdot)$.
The $(p+K)$-dimensional cost rate vector associated with the singular control $\mathcal{U}$, denoted by $\tilde{\alpha}$, is defined as follows: 
\begin{equation}
    \tilde{\alpha} = \begin{pmatrix}
       0_p \\
       \alpha
    \end{pmatrix},\label{alpha_bar}
\end{equation}
where $0_p$ is the $p$-dimensional vector of zeros. Using this notation, we rewrite Equation (\ref{workload_process_diffusion}), which describes the evolution of the workload process, as follows:
\begin{equation}
W(t)=W(0) +\xi(t)+\int_{0}^{t}\theta(s)ds+\tilde{G}\,\mathcal{U}(t),\ t\geq 0.
\label{workload_process_singular}%
\end{equation}
In addition, the singular control process $\mathcal{U}$ satisfies the following:
\begin{align}
&\mathcal{U}(\cdot) \ \text{is continuous and non-decreasing with}\ \mathcal{U}(0)=0.
\label{mathcal_U_nondecrease}
\end{align}
We can then re-express the workload problem (\ref{EWF}) as follows: Given initial state $W(0) = w\in \mathcal{W}$, choose a non-anticipating control $(\mathcal{U},\theta)$ so as to 
\begin{align}
&    \operatorname*{minimize}\quad J(w;\mathcal{U},\theta) = \mathbb{E}\left[  \int_{0}^{\infty
}e^{-\tilde{r} t}\left(c(\theta(t))dt+ \tilde{\alpha}\cdot d\,\mathcal{U}(t)\right)\right],\label{EWF_simplified} \\
&  \text{subject to\quad 
(\ref{W_state_space_constraint}), (\ref{workload_process_singular}) and (\ref{mathcal_U_nondecrease}).}\nonumber
\end{align}
Then the value function $V$, defined in (\ref{value_function}), can be expressed equivalently as follows:
\begin{equation}
    V(w) = \inf_{\mathcal{U},\theta} J(w;\mathcal{U},\theta),\ w\in \mathcal{W}.\label{value_function_simplified}
\end{equation}

To facilitate the analysis to follow, we represent the bounded convex polyhedral state space $\mathcal{W}$ as the intersection of half spaces as done in \cite{Dai_Williams_1996_existence}. To this end, let $J$ denote the number of associated hyperplanes. Also let $\mathcal{J} = \{1,\dots,J\}$ and $\{\sigma_j, \ j\in \mathcal{J}\}$ denote the set of unit vectors that are normal to the faces of the state space $\mathcal{W}$, pointing towards the interior of the state space. Thus, we write
\begin{align}
    \mathcal{W} = \{w\in \mathbb{R}^p : \ \sigma_j \cdot w \geq h_j\ \text{for all}\ j\in \mathcal{J}\},\label{W_cvx_polyhedron}
\end{align}
where $h_j$ is the constant defining the hyperplane associated with the unit normal vector $\sigma_j$ for $j\in \mathcal{J}$. In addition, we define the $(p-1)$-dimensional faces of $\mathcal{W}$ as 
\begin{align}
    F_j = \{ w \in \mathcal{W}:\ \sigma_j \cdot w = h_j\}, \ j \in \mathcal{J}. \label{W_face}
\end{align}

\subsection{Approximation of the Workload Problem by a Drift Control Problem\label{sec: computational_method_approximation}}

The primary objective of our computational method is to solve the workload problem (\ref{EWF_simplified}), a multi-dimensional singular control problem. 
\cite{Ata_Harrison_Si_2024_singular} proposes a computational method for solving high-dimensional singular control problems that approximates them by a drift control problem. We build on their approach and replace the singular control $\mathcal{U}$ with a drift control $\mathcal{U}_b$, where
\begin{equation}
\mathcal{U}_b(t)=\int_{0}^{t}\psi(s)ds,\ t\geq 0.
\label{singular_control_approximation}%
\end{equation}
Here, $\psi(\cdot)$ is a $(p+K)$-dimensional drift rate process that satisfies the following:
\begin{align}
    \psi_i(t)\in [0,b]\ \text{for}\ t\geq 0 \ \text{and}\ i = 1,\dots,p+K,\label{theta_U_bound}
\end{align}
where $b$ is a given constant that can be viewed as a tuning parameter. 

In addition, we introduce a $J$-dimensional backup control $\Upsilon(\cdot)$ where $\Upsilon_j$ is the backup control assigned to face $F_j$ and its pushing direction is denoted by $\hat{g}_j$. We further require that the backup control satisfies the following:
\begin{align}
    &\Upsilon  \text{ is adapted to } \xi,\ \text{continuous and non-decreasing with } \Upsilon(0) = 0,\label{backup_nondecresing}\\
    &\Upsilon_j(\cdot) \text{ only increases at times } t\geq 0 \text{ when } W(t) \in F_j,\ j = 1,\dots,J.\label{backup_face_assign}
\end{align}
Letting $\hat{G} = (\hat{g}_1,\dots,\hat{g}_J)$, the approximation corresponds to replacing the singular control term $\tilde{G}\,\mathcal{U}(t)$ by $\tilde{G}\,\mathcal{U}_b(t) + \hat{G} \Upsilon(t)$. This approximation gives rise to the following system dynamics equation for the approximating drift control problem:
\begin{equation}
W(t)=W(0) +\xi(t)+\int_{0}^{t}\theta(s)ds+\int_{0}^{t}\tilde{G}\psi(s)ds+\hat{G}\Upsilon(t),\ t\geq 0.
\label{workload_process_singular_approximated}%
\end{equation}
For $j = 1,\dots,J$, we assume there exist $a_j \in \mathbb{R}_+^{p+K}$ such that $\hat{g}_j = \tilde{G}a_j$. That is, $\hat{g}_j$ belongs to the convex cone of the pushing directions $\tilde{g}_1,\dots,\tilde{g}_{p+K}$, the columns of matrix $\tilde{G}$. Then we set 
\begin{align*}
    \hat{\alpha}_j = a_j \cdot \tilde{\alpha} \text{ for } j = 1,\dots, J,
\end{align*}
which corresponds to the cost rate associated with the backup singular control $\Upsilon_j$. And we let $\hat{\alpha} = (\hat{\alpha}_j)$ denote the corresponding $J$-dimensional cost rate vector.

\begin{remark}\label{remark:additional_backup_cost}
    As noted in \cite{Ata_Harrison_Si_2024_singular}, one might make the backup control slightly more expensive than approximating drift control. For our backup controls, we increase their cost slightly to discourage their use and to encourage the use of drift controls that approximate the singular controls.
\end{remark}

Of course, we require that $W(t) \in \mathcal{W}$ for $t\geq 0$. More will be said about this requirement in Appendix \ref{appendix: SRBM}. Assuming for now that one can choose the backup control $\Upsilon(\cdot)$ to ensure this, we next state the approximating drift-control problem: Choose non-anticipating drift controls $\theta(\cdot)$ and $\psi(\cdot)$ to
\begin{align}
    &    \operatorname*{minimize}\quad \tilde{J}(w;\theta,\psi) = \mathbb{E}\left[  \int_{0}^{\infty
}e^{-\tilde{r} t}\Big(\big(c(\theta(t))+ \tilde{\alpha}\cdot \psi(t)\big) dt + \hat{\alpha}\cdot d\Upsilon(t)\Big)\right],\label{EWF_drift_approximation}\\
&  \text{subject to\quad 
(\ref{W_state_space_constraint}), (\ref{theta_U_bound})--(\ref{workload_process_singular_approximated}).}\nonumber
\end{align}
And the corresponding value function, denoted by $\tilde{V}$, is given as follows:
\begin{equation}
    \tilde{V}(w) = \inf_{\theta,\psi} \tilde{J}(w;\theta,\psi),\ w\in \mathcal{W}.\label{value_function_drift_approximation}
\end{equation}

In order to solve the approximating drift-rate control problem, we consider the associated Hamilton-Jacobi-Bellman (HJB) equation in the next section.

\subsection{HJB Equation\label{sec: computational_method_HJB}}

As a preliminary to introducing the HJB equation, given a function $f:\mathbb{R}^p \to \mathbb{R}$ that is twice continuously differentiable, we let $\nabla f$ define its gradient and let $\mathcal{L}$ be the second-order differential operator defined as
$$\mathcal{L}f = \gamma\cdot \nabla f + \frac{1}{2} \sum_{i=1}^p \sum_{j=1}^p \Gamma_{ij}\frac{\partial^2 f}{\partial w_i \partial w_j},$$
where $\gamma$ and $\Gamma$ are drift and variance parameters of Brownian motion $\xi$. We also define a first-order differential operator $\mathcal{D}f = \hat{G}^\top \nabla f$, which corresponds to the directional derivatives at the boundary of the state space along the specified pushing direction for each face.

Following \cite{Ata_Harrison_Si_2024_drift}, it is straightforward to derive the HJB equation, to be solved for the value function $\tilde{V}$, as follows: 
\begin{align}
\mathcal{L}\tilde{V}(w) + \min_{\theta \in \Theta} \{\nabla \tilde{V}(w) \cdot \theta + c(\theta)\} + \min_{\psi\in [0,b]^{p+K}} \{(\tilde{G}^\top \nabla \tilde{V}(w) +\tilde{\alpha})\cdot \psi\}=\tilde{r} \tilde{V}(w),\ w\in \mathcal{W},
\label{HJB}%
\end{align}
with boundary conditions
\begin{align}
\mathcal{D}_j\tilde{V}(w)   = -\hat{\alpha}_j   \text{ for } w \in F_j,\ j  = 1,\dots,J.
\label{HJB_BC}%
\end{align}
Moreover, the policy
\begin{align}
\theta^*(w) = \operatorname*{argmin}_{\theta\in \Theta}\{\nabla \tilde{V}(w) \cdot \theta + c(\theta)\}\ \text{and}\ \psi^*(w) = \operatorname*{argmin}_{\psi\in [0,b]^{p+K}} \{(\tilde{G}^\top \nabla \tilde{V}(w) +\tilde{\alpha})\cdot \psi\},\ w \in \mathcal{W}\label{drift_problem_policy}
\end{align}
is optimal for the approximating drift rate control problem. Since $c(\theta)$ according to Lemma \ref{lemma:c_x} is a quadratic function of $\theta$, the optimal drift control $\theta^*(w)$ in (\ref{drift_problem_policy}) is given as follows:
\begin{align}
    \theta^*(w) = -\frac{1}{2} \tilde{M}H^{-1}\tilde{M}'\nabla\tilde{V}(w),\quad w\in \mathcal{W}.\label{drift_problem_policy_theta_explicit}%
\end{align}
One can verify that $\theta^*(w) \in \Theta$. Also, note that it is linear with respect to the gradient $\nabla \tilde{V}(w)$.
The optimal (approximating) drift control $\psi^*(w)$ in (\ref{drift_problem_policy}) is given as follows: for $i = 1,\dots,p+K$,
\begin{align}
    \psi_i^*(w) = \left\{\begin{array}{ll}
    b & \text{if}\ \tilde{g}_i \cdot \nabla \tilde{V}(w) + \tilde{\alpha}_i\leq 0,\\
    0 & \text{otherwise,}
    \end{array}\right.\label{drift_problem_policy_psi_explicit}
\end{align}
where $\tilde{g}_i$ is the $i$th column of $\tilde{G}$.

\subsection{Reference Process\label{sec: computational_method_reference}}

A key element of the computational method is the reference policy that we choose as one involving constant drift controls, i.e., $\theta(t) = \tilde{\theta}$ and $\psi(t) = \tilde{\psi}$ for $t \geq 0$. Also letting $B(t) = \xi(t)- \gamma t$ for $t\geq 0$ so that $B$ is a driftless Brownian motion with covariance matrix $\Gamma$, the evolution of the controlled workload process $\tilde{W}$ under the reference policy can then be expressed as follows:
\begin{equation}
\tilde{W}(t)=\tilde{W}(0) + \gamma t + B(t) + \tilde{\theta} t   + \tilde{G} \tilde{\psi}t + \hat{G}\tilde{\Upsilon}(t), \label{reference_process}%
\end{equation}
where $\tilde{\Upsilon}$ is the backup singular process that uses the pushing direction $\hat{g}_j$ on face $F_j$ of the state space $\mathcal{W}$ for $j = 1,\dots,J$. We refer to the process $\tilde{W}(\cdot)$ as the reference process. Intuitively, one seeks to choose a reference policy $(\tilde{\theta},\tilde{\psi})$ so that the resulting sample paths of the reference process tend to occupy parts of the state space that the optimal policy is expected to visit frequently. This has been observed in prior literature, e.g., \cite{Ata_Harrison_Si_2024_drift} and \cite{Ata_Kasikaralar_2023_dynamic}. In our context, the reference process should also generate sufficiently many samples near the boundary, especially the upper faces where the costly controls may be used. Thus, learning the gradient is essential for making sound control decisions.

Appendix \ref{appendix: SRBM} argues that one can choose the pushing directions of the backup singular control $\tilde{\Upsilon}$ so that the reference process $\tilde{W}(\cdot)$ is well defined for our problem.

\subsection{An Equivalent SDE and the Solution Method\label{sec: computational_method_SDE}}

To solve the multi-dimensional HJB equation (\ref{HJB})--(\ref{HJB_BC}), we build on \cite{Ata_Harrison_Si_2024_drift} and develop a neural network-based computational method tailored for problems with a bounded convex polyhedral state space. The key idea is to derive a stochastic differential equation (SDE) that is equivalent to the HJB equation (\ref{HJB})--(\ref{HJB_BC}). Utilizing the reference process introduced in Section \ref{sec: computational_method_reference}, Proposition \ref{proposition: equivalent_SDE} derives that SDE, where the value function $\tilde{V}$ and its gradient $\nabla \tilde{V}$ are the unknown functions of interest. In what follows, we compute them using the identity given in Equation (\ref{reference_SDE}); see Appendix \ref{Appendix: proof: equivalent_SDE} for the proof of Proposition \ref{proposition: equivalent_SDE}.

\begin{proposition}\label{proposition: equivalent_SDE}
    If $\tilde{V}$ is a $C^2$ function with polynomial growth that satisfies the HJB equation (\ref{HJB})--(\ref{HJB_BC}), then it satisfies the following SDE almost surely for any $T>0$:
    \begin{align}
    e^{-\tilde{r} T}\tilde{V}(\tilde{W}(T))-\tilde{V}(\tilde{W}(0))  &  =\int_{0}^{T}e^{-\tilde{r} t}\nabla
    \tilde{V}(\tilde{W}(t))\cdot dB(t)\nonumber\\
    &  -\int_{0}^{T}e^{-\tilde{r} t}F(\nabla \tilde{V}(\tilde{W}(t)))dt-\int_{0}^{T}e^{-\tilde{r} t}%
    \hat{\alpha}\cdot d\tilde{\Upsilon}(t), \label{reference_SDE}%
    \end{align}
    where function $F:\mathbb{R}%
    ^{p}\rightarrow\mathbb{R}$ is defined as%
    \begin{equation}
    F(x)=-x\cdot\tilde{\theta}  + \min_{\theta\in\Theta}\{ x\cdot\theta+c(\theta)\} - (\tilde{G}^{\top} x)\cdot \tilde{\psi}+\min_{\psi\in[0,b]^{p+K}}\{(\tilde{G}^{\top}x+\tilde{\alpha})\cdot\psi\}. \label{F_function}%
    \end{equation}
\end{proposition}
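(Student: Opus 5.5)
The plan is to apply It\^o's formula to the discounted process $e^{-\tilde{r} t}\tilde{V}(\tilde{W}(t))$ along the reference process (\ref{reference_process}). Then we use the HJB equation (\ref{HJB}) to eliminate the second-order terms and the boundary conditions (\ref{HJB_BC}) to eliminate the backup-control terms.

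\textbf{Step 1: semimartingale decomposition.} By (\ref{reference_process}), $\tilde{W}$ is a continuous semimartingale. Its finite-variation part is $(\gamma+\tilde{\theta}+\tilde{G}\tilde{\psi})t+\hat{G}\tilde{\Upsilon}(t)$, its martingale part is $B$, and its quadratic covariation is $\Gamma t$, since $\tilde{\Upsilon}$ is continuous and of finite variation. Because $\tilde{V}$ is $C^2$ (on a neighborhood of the compact set $\mathcal{W}$, or in the one-sided sense up to the boundary), It\^o's formula for $e^{-\tilde{r} t}\tilde{V}(\tilde{W}(t))$ gives
\begin{align*}
e^{-\tilde{r} T}\tilde{V}(\tilde{W}(T))-\tilde{V}(\tilde{W}(0)) &= \int_0^T e^{-\tilde{r} t}\big(\mathcal{L}\tilde{V}-\tilde{r}\tilde{V} + \nabla\tilde{V}\cdot(\tilde{\theta}+\tilde{G}\tilde{\psi})\big)(\tilde{W}(t))\,dt\\
&\quad +\int_0^T e^{-\tilde{r} t}\nabla\tilde{V}(\tilde{W}(t))\cdot dB(t)+\int_0^T e^{-\tilde{r} t}\,\mathcal{D}\tilde{V}(\tilde{W}(t))\cdot d\tilde{\Upsilon}(t).
\end{align*}
Here the $\gamma\cdot\nabla\tilde{V}$ term and the Hessian term have been combined into $\mathcal{L}\tilde{V}$.

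\textbf{Step 2: substitute the HJB equation.} By (\ref{HJB}), for $w\in\mathcal{W}$ we have $\mathcal{L}\tilde{V}(w)-\tilde{r}\tilde{V}(w) = -\min_{\theta\in\Theta}\{\nabla\tilde{V}\cdot\theta+c(\theta)\}-\min_{\psi}\{(\tilde{G}^\top\nabla\tilde{V}+\tilde{\alpha})\cdot\psi\}$. Adding $\nabla\tilde{V}\cdot\tilde{\theta}+(\tilde{G}^\top\nabla\tilde{V})\cdot\tilde{\psi}$ to both sides, the $dt$-integrand becomes exactly $-F(\nabla\tilde{V}(\tilde{W}(t)))$, by the definition (\ref{F_function}). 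This step uses that $\tilde{W}(t)\in\mathcal{W}$ for all $t$, which holds by the construction of the backup control referenced in Appendix \ref{appendix: SRBM}.

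\textbf{Step 3: the boundary term.} By (\ref{backup_face_assign}), each $\tilde{\Upsilon}_j$ increases only when $\tilde{W}(t)\in F_j$. On $F_j$ the boundary condition (\ref{HJB_BC}) gives $\mathcal{D}_j\tilde{V}=\hat{g}_j\cdot\nabla\tilde{V}=-\hat{\alpha}_j$. Hence $\int_0^T e^{-\tilde{r} t}\mathcal{D}\tilde{V}(\tilde{W}(t))\cdot d\tilde{\Upsilon}(t)=-\int_0^T e^{-\tilde{r} t}\hat{\alpha}\cdot d\tilde{\Upsilon}(t)$. Combining Steps 1–3 yields (\ref{reference_SDE}) almost surely for each fixed $T$, and therefore simultaneously for all $T$ by path continuity.

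\textbf{Main obstacle.} The delicate point is justifying It\^o's formula near and on the boundary of the polyhedron, including the corners where several faces meet. At a corner $\tilde{\Upsilon}$ could push along several faces at once, but each $d\tilde{\Upsilon}_j$ is still supported on $F_j$, so the termwise argument of Step 3 goes through. I would also check that the stochastic integral is well defined. This is straightforward here: $\mathcal{W}$ is bounded and $\nabla\tilde{V}$ is continuous, so $\nabla\tilde{V}(\tilde{W})$ is bounded; the polynomial growth hypothesis is only needed if one wants the stochastic integral to be a true martingale rather than a local martingale, and the identity itself does not require this.
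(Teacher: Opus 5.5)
Your proposal is correct and follows essentially the same route as the paper: apply It\^o's formula to $e^{-\tilde{r} t}\tilde{V}(\tilde{W}(t))$, use the HJB equation (\ref{HJB}) to turn the $dt$-integrand into $-F(\nabla\tilde{V}(\tilde{W}(t)))$, and use the boundary condition (\ref{HJB_BC}) with the complementarity condition (\ref{backup_face_assign}) to replace the $d\tilde{\Upsilon}$-term by $-\hat{\alpha}\cdot d\tilde{\Upsilon}$. Your extra remarks go slightly beyond the paper's proof without conflicting with it: that $\nabla\tilde{V}$ is bounded on the compact set $\mathcal{W}$, and that polynomial growth matters only for the martingale property.
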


To compute the value function $\tilde{V}$ and its gradient $\nabla \tilde{V}$ using identity (\ref{reference_SDE}), we proceed in four steps (see Algorithm \ref{alg:discounted_control} for details). First, we simulate workload process $\tilde{W}$ under reference policy $(\tilde{\theta},\tilde{\psi})$, which will be used as the neural network training data. To be specific, with a chosen time horizon $T$, we simulate $\bar{B}$ independent time-discretized paths of workload process under reference policy using discretization step $h$ (assuming $N:=T/h$ is an integer), and store the following time-discretized workload paths, backup singular control increments, and Brownian increments as data: $\{(\tilde{W}^{(i)}(t),\Delta \tilde{\Upsilon}^{(i)}(t),\Delta B^{(i)}(t)),t = 0,h,\dots,T\},\ i = 1,\dots,\bar{B}$, where the superscript $(i)$ represents $i$th simulated path and $\bar{B}$ denotes the batch size. Algorithm \ref{alg:euler_discretization} in Appendix \ref{appendix: computational_description} provides a detailed description of the first step.

Second, we parameterize
functions $\tilde{V}(\cdot)$ and its gradient $\nabla \tilde{V}(\cdot)$ in identity (\ref{reference_SDE}) by two different neural networks $V_{w_{1}}(\cdot)$ and $G_{w_{2}}(\cdot)$, where $w_1$ and $w_2$ denote the parameters in two neural networks, respectively. The consistency requirement of parameterizing value function $\tilde{V}(\cdot)$ and gradient function $\nabla \tilde{V} (\cdot)$ by different neural networks is justified in Proposition 3 of \cite{Ata_Harrison_Si_2024_drift} for their setting.
Namely, their results suggest the following conjecture: If functions $\tilde{V}(\cdot)$ and $G(\cdot)$ solve 
\begin{align}
    e^{-\tilde{r} T}\tilde{V}(\tilde{W}(T))-\tilde{V}(\tilde{W}(0))  &  =\int_{0}^{T}e^{-\tilde{r} t}G(\tilde{W}(t))\cdot dB(t)\nonumber\\
    &  -\int_{0}^{T}e^{-\tilde{r} t}F(G(\tilde{W}(t)))dt-\int_{0}^{T}e^{-\tilde{r} t}%
    \hat{\alpha}\cdot d\tilde{\Upsilon}(t), \label{reference_SDE_double_parameterize}%
\end{align}
then $G(\cdot) = \nabla \tilde{V}(\cdot)$ and $\tilde{V}(\cdot)$ solves HJB equation (\ref{HJB})--(\ref{HJB_BC}). For brevity, we do not pursue the proof of this conjecture.

Third, we construct the loss function as the expected squared error that arises in the identity (\ref{reference_SDE}) when the neural network approximations $V_{w_1}(\cdot)$ and $G_{w_2}(\cdot)$ are substituted for the value function $\tilde{V}$ and its gradient $\nabla \tilde{V}$:
\begin{align}
    \ell (w_1,w_2) &= \mathbb{E}\bigg[\bigg(e^{-\tilde{r} T}V_{w_1}(\tilde{W}(T))-V_{w_1}(\tilde{W}(0))  -\int_{0}^{T}e^{-\tilde{r} t}G_{w_2}(\tilde{W}(t))\cdot dB(t)\bigg.\nonumber\\
    &  \bigg.+\int_{0}^{T}e^{-\tilde{r} t}F(G_{w_2}(\tilde{W}(t)))dt+\int_{0}^{T}e^{-\tilde{r} t}%
    \hat{\alpha}\cdot d\tilde{\Upsilon}(t)\bigg)^2\bigg], \label{loss_function_SDE}%
\end{align}
and derive its time-discretized version---the integrals are replaced by the
approximating sums. More specifically, we define empirical loss as follows.
\begin{align}
    \hat{\ell} (w_1,w_2) &= \frac{1}{\bar{B}}\sum_{i=1}^{\bar{B}} \bigg(e^{-\tilde{r} T}V_{w_1}(\tilde{W}^{(i)}(T))-V_{w_1}(\tilde{W}^{(i)}(0))  -\sum_{j=0}^{N-1} e^{-\tilde{r} hj} G_{w_2}(\tilde{W}^{(i)}(hj))\cdot \Delta B^{(i)}(hj)\bigg.\nonumber\\
    &  \bigg.+\sum_{j=0}^{N-1}e^{-\tilde{r} hj}F(G_{w_2}(\tilde{W}^{(i)}(hj)))h+\sum_{j=0}^{N-1}e^{-\tilde{r} hj}%
    \hat{\alpha}\cdot \Delta \tilde{\Upsilon}^{(i)}(hj)\bigg)^2.\label{loss_function_SDE_empirical}%
\end{align}
Lastly, we learn
the weight vectors $(w_{1},w_{2})$ of neural networks $V_{w_1}(\cdot)$ and $G_{w_2}(\cdot)$ by minimizing empirical loss (\ref{loss_function_SDE_empirical}) using a standard optimizer; see Algorithm \ref{alg:discounted_control}.

\begin{algorithm}[t]
\begin{small}
\caption{Computational method for solving equivalent SDE (\ref{reference_SDE}).}
\label{alg:discounted_control}
\KwIn{Number of iteration steps $\bar{M}$, batch size $\bar{B}$, time horizon $T$, discretization step size $h$ with $N := T/h \in \mathbb{N}$, starting point $w$, and an optimization solver (SGD, ADAM, RMSProp, etc.).}
\KwOut{Neural network approximations of the value function $V_{w_1}(\cdot)$ and gradient function $G_{w_2}(\cdot)$.}
Initialize the neural networks $V_{w_1}(\cdot)$ and $G_{w_2}(\cdot)$; set $w^{(i)}_1 = w$ for $i = 1, 2, \ldots, \bar{B}$\;
\For{$k \gets 1$ \KwTo $\bar{M}$}{
    Simulate $\bar{B}$ discretized workload paths, backup singular control increments, and Brownian increments $\{\tilde{W}^{(i)}, \Delta \tilde{\Upsilon}^{(i)}, \Delta B^{(i)}\}$ starting from $\tilde{W}^{(i)}(0) = w^{(i)}_k$ by invoking \textsc{Discretize}($T$, $h$, $w^{(i)}_k$) for $i = 1, \ldots, \bar{B}$\;
    Compute the empirical loss $\hat{\ell}(w_1, w_2)$ based on Equation (\ref{loss_function_SDE_empirical})\;
    Compute the gradients $\partial \hat{\ell}/\partial w_1$ and $\partial \hat{\ell}/\partial w_2$, and update $w_1, w_2$ using the chosen optimization solver\;
    Update $w^{(i)}_{k+1} \gets \tilde{W}^{(i)}(T)$\;
}
\KwRet $V_{w_1}(\cdot)$ and $G_{w_2}(\cdot)$\;
\end{small}
\end{algorithm}

\subsection{Proposed Policy for the Workload Problem\label{sec: computational_method_policy}}

Having solved value function $V_{w_{1}}(\cdot)$ and gradient function $G_{w_{2}}(\cdot)$, we propose the drift and singular controls as follows. First, the proposed drift control is
\begin{align}
    \theta^*(w) = -\frac{1}{2} \tilde{M}H^{-1}\tilde{M}'G_{w_{2}}(w),\quad w\in \mathcal{W}.\label{learned_drift_control_closed_form}%
\end{align}
Second, the proposed singular control is active only at the boundary $\partial \mathcal{W}$ of the state space $\mathcal{W}$. For any interior point $w$ of face $F_j$ ($j\in \mathcal{J}$), the index of proposed singular control is
\begin{align}
    i^*(w) \in \operatorname*{argmin}_{i:\  \tilde{g}_i \cdot \sigma_j > 0} \frac{\tilde{g}_i \cdot G_{w_2}(w) + \tilde{\alpha}_i}{\tilde{g}_i \cdot \sigma_j},\label{learned_boundary_control}
\end{align}
where $\tilde{g}_i$ (resp. $\tilde{\alpha}_i$) is the pushing direction (resp. cost rate) of singular control $\mathcal{U}_i$, and $\sigma_j$ is the unit vector normal to face $F_j$ and pointing towards interior of state space $\mathcal{W}$. The denominator $\tilde{g}_i \cdot \sigma_j$ is the amount of displacement along the inward unit normal vector $\sigma_j$ per unit of pushing along the direction $\tilde{g}_i$, and acts as a normalization factor. Our proposed singular control is $\mathcal{U}_{i^*(w)}$. It seeks to choose the pushing direction with the lowest cost per unit of displacement among those pushing directions pointing to the interior of the state space.

The definition in Equation (\ref{learned_boundary_control}) is restricted in two ways. First, we consider only pure singular controls, as these admit a direct implementation in the original queueing network. Second, Equation (\ref{learned_boundary_control}) specifies a pushing direction only at the interior of each face. For completeness, we define the singular control for the remaining boundary points, i.e., the points that lie at the intersection of multiple faces, by simply fixing an arbitrary pushing direction that points to the interior of the state space. One can show that the existence of such pushing directions follows from Proposition \ref{proposition: existence_singular_control}; see Appendix \ref{appendix: SRBM}.

As will be seen in the next section, the proposed drift and singular controls (\ref{learned_drift_control_closed_form})--(\ref{learned_boundary_control}) have straightforward interpretations in original queueing system. And we build on these to propose an integrated pricing, outsourcing, and scheduling policy.

\section{Proposed Policy\label{sec: proposed_policy}}

In this section, we propose a policy for the original queueing system that consists of three components: outsourcing and (discretionary) idling, i.e., pushing, pricing, and scheduling. We introduce the three components of the proposed policy in Sections \ref{sec: policy_outsourcing}--\ref{sec: policy_scheduling}, respectively, and describe how to choose policy parameters in Section \ref{sec: policy_parameters}.

The pricing, outsourcing, and discretionary idling policies proposed for the original queueing network are derived from the solution to the workload problem. Of these three components, the pricing policy follows relatively directly from the drift-rate control policy for the workload problem, subject to a few nuances discussed below. By contrast, the construction of the outsourcing and discretionary idling policies is more involved. To design these policies, we carefully emulate the singular control policy for the workload problem characterized in Equation (\ref{learned_boundary_control}), introducing the necessary modifications to accommodate the original queueing network. We next describe the resulting policy in detail.

As a preliminary, we fix a system parameter $n$ and introduce the unscaled (or nominal) workload process  $W^n = \{W^n(t): t\geq 0\}$ in the $n$th system. Given $n$ and queue length $Q^n$ in $n$th system, we define $W^n$ by
\begin{equation}
W^n_{i}(t)=\sum_{(k,l)\in \mathcal{C}_{i}}m_{k,l}\sum_{j=1}^{l} Q^n_{k,j}(t), \ t \geq 0, \ i = 1,\dots,p,
\label{nth_workload_process}%
\end{equation}
which can be seen as a prelimit version of \eqref{workload_process}, i.e., $W_i^n(t) \approx \sqrt{n} W_i(t)$ for large $n$.

Then we define the corresponding state space of $W^n$, by first introducing a notation $\mathcal{Q}^n$ representing the state space of queue-length process $Q^n$:
\begin{equation}
\mathcal{Q}^n=\left\{  x\in \mathbb{R}_+^m: \sum_{l=1}^{n_{k}}x_{k,l}\leq q^n_{k},\ k = 1,\dots,K\right\},
\label{nth_Q_state_space}%
\end{equation}
then we write the state space of $W^n$ as
\begin{equation}
\mathcal{W}^n=\left\{  w\in \mathbb{R}_+^p:\exists x\in\mathcal{Q}^n\text{ such that }w_{i}%
=\sum_{(k,l)\in \mathcal{C}_{i}}m_{k,l}\sum_{j=1}^l x_{k,j},\ i=1,\ldots
,p\right\} . \label{nth_W_state_space}%
\end{equation}
One can verify that $\mathcal{W}^n = \sqrt{n}\mathcal{W}$, where $\mathcal{W}$ is the state space of (limiting) workload process $W$ as defined in Equation (\ref{W_state_space}).

Although the throughput-time constraints can be satisfied asymptotically in the heavy-traffic limit, in the prelimit systems they can only be guaranteed to hold with high probability. To achieve such a guarantee, we adopt a two-pronged approach. First, we modify the intended state space $\mathcal{W}^n$, possibly shrinking it by a factor $\rho$, so that the maximum probability of violating a throughput-time constraint is below a prescribed tolerance $\varepsilon$, such as $\varepsilon=1\%$. Second, we introduce a band surrounding $\mathcal{W}^n$ that governs the control policy whenever the process exits the set $\mathcal{W}^n$.

More specifically, we first consider a family of policies parameterized by $\rho>0$, under which the queue-length bounds are scaled as follows: $q^n_k(\rho) = \rho\cdot q^n_k$, $k = 1,\dots,K$. The parameter $\rho$ controls the degree of conservatism of the policy and will be chosen to ensure that the prescribed probability-of-violation requirement is satisfied. 
Both limiting scaled and prelimit unscaled workload state spaces are parameterized by $\rho$:
\begin{align}
    \mathcal{W}_\rho = \rho \mathcal{W}\ \text{and}\  \mathcal{W}^n_\rho = \rho \sqrt{n} \mathcal{W}.\label{nth_W_state_space_rho}
\end{align}
For each value of $\rho$, one therefore formulates a Brownian control problem and solves it following our method in Sections \ref{sec: BCP}--\ref{sec: computational_method} to obtain a gradient function $G_{w_2}(\cdot;\rho)$. A similar treatment on queue-lengths bounds can be found in \cite{Ata_2006_thin} and \cite{Rubino_Ata_2009}. We discuss how to choose parameter $\rho$ in Section \ref{sec: policy_parameters}.

Next, we introduce the band surrounding $\mathcal{W}^n_\rho$. While the workload process $W^n$ remains within this band, the proposed outsourcing and discretionary idling policies emulate the singular policy applied at the boundary of the state space $\mathcal{W}_\rho$ in the workload problem. If $W^n$ moves beyond the band, we switch to an outsourcing-only safeguard policy to push the workload towards the state space.

While the pricing, outsourcing, and discretionary idling policies follow from the limiting workload solution, the scheduling policy does not. Our scheduling policy builds on the maximum pressure policy (see, e.g., \cite{Dai_Lin_2005_maximum,Dai_Lin_2008,Ata_Lin_2008}), a throughput-optimal scheduling rule for queueing networks. To account for the throughput-time constraints, we augment the standard pressure metric with a route-level congestion term. Lastly, we incorporate the notion of safety stocks (see, e.g., \cite{Harrison_1996_bigstep,Harrison_1998_heavy,Maglaras_2000_discrete}) to mitigate unplanned idling. This modification is particularly important for networks such as the Rybko-Stolyar network, where the well-known mutual-blocking phenomenon can lead to severe performance degradation.

\subsection{Proposed Outsourcing and Discretionary Idling Policies\label{sec: policy_outsourcing}}

As a preliminary to introducing our proposed outsourcing and discretionary idling policies, we represent the prelimit state space $\mathcal{W}^n_{\rho}$ as the intersection of half-spaces: 
\begin{align}
    \mathcal{W}^n_{\rho} = \{w\in \mathbb{R}^p : \ \sigma_j \cdot w \geq h^n_j(\rho)\ \text{for}\ j\in \mathcal{J}\},\label{nth_W_cvx_polyhedron}
\end{align}
where $h^n_j(\rho):= \sqrt{n}\rho h_j$; cf. Equation (\ref{W_cvx_polyhedron}). In order to define the aforementioned band, we first introduce an enlarged state space with parameter $\epsilon^n>0$ as
\begin{align}
    \mathcal{W}^{n}_{\rho}(\epsilon^n)
    = \left\{ w\in \mathbb{R}_+^p:\;
    \sigma_j \cdot w \geq h_j^n(\rho)-\epsilon^n
    \text{ for } j\in\mathcal{J}
    \right\}.
    \label{relaxed_state_space_choice3}
\end{align}
Then we define the band as $\mathcal{B}_{\rho}^n(\epsilon^n)=\mathcal{W}_{\rho}^n(\epsilon^n) \setminus \mathcal{W}_{\rho}^n$. The parameter $\epsilon^n$ measures the width of the band $\mathcal{B}_{\rho}^n(\epsilon^n)$, and we discuss how to set it in Section \ref{sec: policy_parameters}. In what follows we refer to the region outside enlarged state space, i.e., $\mathbb{R}_+^p\backslash\mathcal{W}^{n}_{\rho}(\epsilon^n)$, as the region beyond the band. 

\begin{figure}[tbp]
\FIGURE
{\includegraphics[width=0.5\figbase]{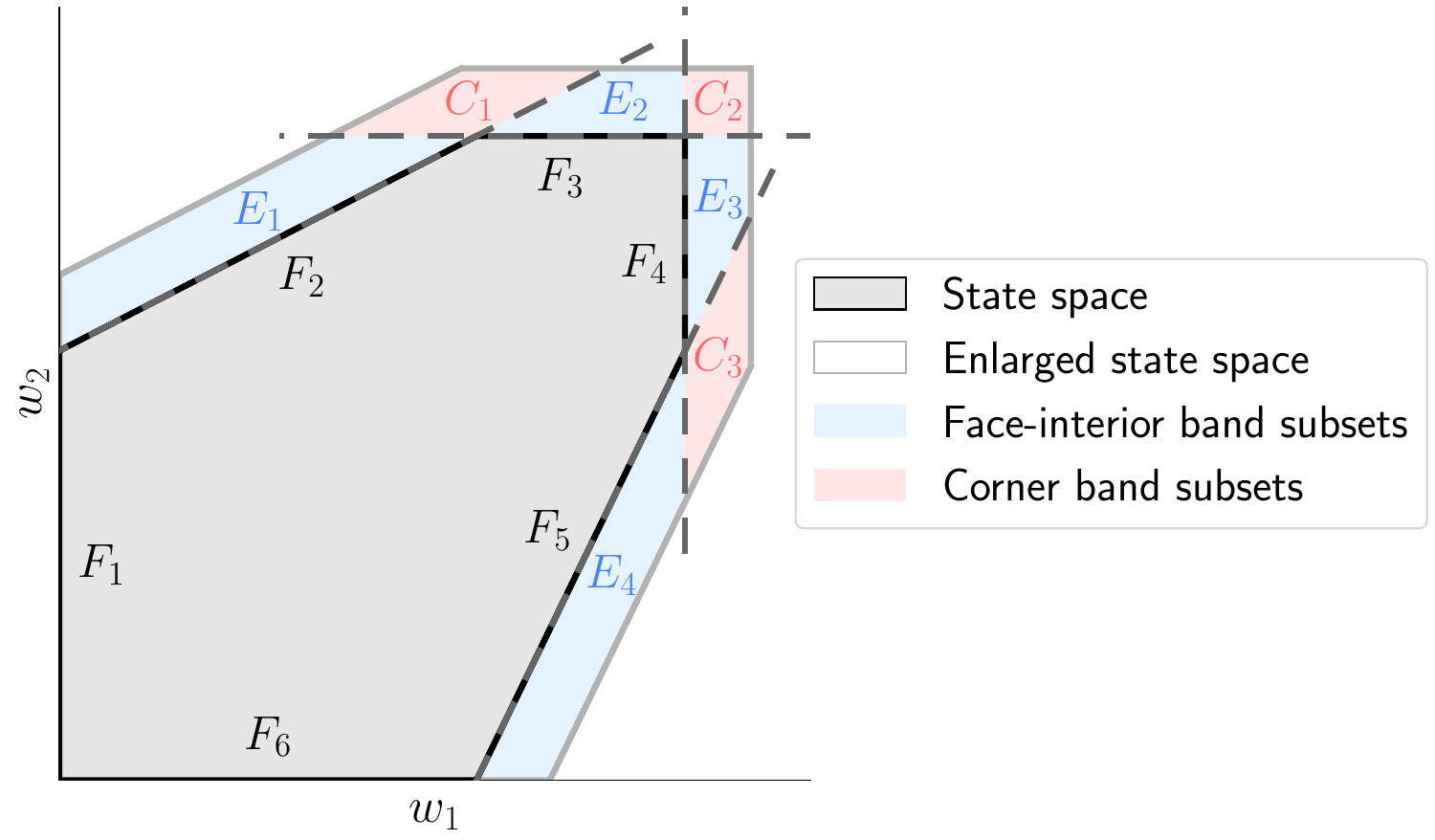}}
{Illustration of prelimit band structure.\label{fig:prelimit_band_illustration}}
{}
\end{figure}

Figure \ref{fig:prelimit_band_illustration} illustrates these definitions for the Rybko-Stolyar example. The gray region corresponds to $\mathcal{W}^n_\rho$, while the larger region enclosing it is $\mathcal{W}^n_\rho(\epsilon^n)$. The difference between these two regions forms the band, which is further partitioned into the subsets $E_1,\ldots,E_4$, shown in blue, and $C_1,\ldots,C_3$, shown in pink. We refer to $E_1,\ldots,E_4$ as the \textit{face-interior band subsets}. For example, $E_4$ encloses the interior points of the face $F_5$ of $\mathcal{W}^n_\rho$, where the faces of $\mathcal{W}^n_\rho$ are denoted by $F_1,\ldots,F_6$. Similarly, we refer to $C_1,C_2$, and $C_3$ as the \textit{corner band subsets}. For example, $C_1$ surrounds the corner formed by the intersection of faces $F_2$ and $F_3$. For points in the corner band subsets, $C_1,\ldots,C_3$, multiple half-space constraints are violated. In contrast, for points in the face-interior band subsets, $E_1,\ldots,E_4$, only one half-space constraint is violated. To define these sets formally, we introduce the set
\begin{align}
    \mathcal{K}_{\text{out}}(w) = \{j\in \mathcal{J}:\ \sigma_j \cdot w < h^n_j(\rho) \},
\end{align}
which denotes the set of violated half-space constraints at workload $w$. Also, its cardinality $| \mathcal{K}_{\mathrm{out}}(w) |$ gives the number of such violations. For sets as in $E_1,\dots,E_4$ we have $| \mathcal{K}_{\mathrm{out}}(w) | =1$ whereas for $w$ in $C_1, C_2$ and $C_3$, $| \mathcal{K}_{\mathrm{out}}(w) | = 2$.

Our outsourcing and discretionary idling policies are applied only when the workload leaves the state space $\mathcal{W}^n_\rho$. We distinguish among three cases, depending on where the workload lies: (i) the face-interior band subsets; (ii) the corner band subsets; and (iii) the region beyond the band.

Before describing our proposed policy, two subtleties are worth mentioning. First, we must specify which gradient value to use for decision-making when the workload vector $w\notin\mathcal{W}_{\rho}^{n}$. This issue arises because the neural network $G_{w_2}(w;\rho)$, which approximates the gradient vector, is trained on $\mathcal{W}_{\rho}^{n}$ and therefore it is more accurate when $w\in\mathcal{W}_{\rho}^{n}$.
When $w\notin\mathcal{W}_{\rho}^{n}$, we instead evaluate the gradient using the most recently observed workload vector in $\mathcal{W}_{\rho}^{n}$. Specifically, let $\tau_1,\tau_2,\ldots$ denote the successive exit times of the workload process $W^n(t)$ from $\mathcal{W}_{\rho}^{n}$. Thus, for each $\ell\geq 1$,
$$ W^n(\tau_\ell)\notin\mathcal{W}_{\rho}^{n}. $$
Between $\tau_\ell$ and $\tau_{\ell+1}$, the process returns to $\mathcal{W}_{\rho}^{n}$ and remains there until its next exit at time $\tau_{\ell+1}$. For any time $t$ at which $W^n(t)\notin\mathcal{W}_{\rho}^{n}$, define
$$ \ell^*(t)=\sup\{\ell\geq 1:\tau_\ell\leq t\}.$$
Then $\tau_{\ell^*(t)}$ is the most recent exit time before $t$, and $W^n(\tau_{\ell^*(t)}^-)$ is the last workload vector observed in $\mathcal{W}_{\rho}^{n}$. Accordingly, we use the gradient estimate $\widetilde{G}(t)$, defined as follows:
\begin{align}
    \widetilde{G}(t) =
\begin{cases}
G_{w_2}\!\left(\frac{W^n(t)}{\sqrt{n}};\rho\right), & \text{if } W^n(t)\in\mathcal{W}_\rho^n,\\[6pt]
G_{w_2}\!\left(\frac{W^n(\tau_{\ell^*(t)}^-)}{\sqrt{n}};\rho\right), & \text{otherwise.}
\end{cases}\label{recent_interior_gradient}
\end{align}

Second, as mentioned earlier, safety stocks will be used to facilitate coordination among servers. Consequently, some servers may be required to operate to maintain the safety stocks of their downstream buffers. This requirement precludes deliberately idling these servers, even when the pushing control associated with the workload problem prescribes doing so. This issue arises under our proposed policy only when the workload process lies within a face-interior band subset and will be discussed further when we examine that case.

More specifically, fix a safety-stock vector $s=(s_{k,l})$. The system manager may exercise discretionary idleness for server $i$, $i=1,\ldots,p$, if the downstream buffer of each of its nonempty buffers contains more jobs than its safety-stock level. Let $S(t)$ denote the set of servers that may be idled at time $t\geq 0$; that is,
\begin{align}
    S(t) = \{i = 1,\dots,p: \forall (k,l)\in \mathcal{C}_i \text{ with } Q_{k,l}(t) > 0,\ Q_{k,l+1}(t) \geq s_{k,l+1}\}.
 \end{align}
In what follows, discretionary idling at time $t$ is permitted only for servers in $S(t)$. We also let $\mathcal{A}=\{p+1,\ldots,p+K\}$ denote the set of pushing directions associated with outsourcing decisions. Next, we describe the proposed outsourcing and discretionary-idling policies for the two types of subsets of the band surrounding the state space of the unscaled workload process.

\textit{Face-interior band subsets.} Recall that when the workload process lies in such a subset, exactly one half-space constraint is violated; that is, \(\mathcal{K}_{\mathrm{out}}(W^n(t))\) is a singleton. For concreteness, suppose that
$ \mathcal{K}_{\mathrm{out}}(W^n(t))=\{j\}$, 
and \(\sigma_j\) is the inward-pointing unit normal to the face \(F_j\). We then choose the index \(i^*\) of the proposed pushing direction as follows:
\begin{align}
    i^* \in \operatorname*{argmin}_{i\in S(t)\cup \mathcal{A}:\  \tilde{g}_i \cdot \sigma_j > 0} \frac{\tilde{g}_i \cdot \widetilde{G}(t) + \tilde{\alpha}_i}{\tilde{g}_i \cdot \sigma_j}.\label{proposed_interior_index}
\end{align}
If $i^*\in S(t)$, the system manager idles server $i^*$. If instead $i^*\in\mathcal{A}$, she outsources the corresponding product type. Under the indexing conventions used in the definition of $\mathcal{A}$ and in (\ref{mathcal_U})--(\ref{workload_process_singular}), the set of outsourced product type is $i^* -p$.

\textit{Corner band subsets and beyond the band.} Recall that when the workload vector lies in a corner-band subset, more than one half-space constraint is violated; that is,
$ \bigl|\mathcal{K}_{\mathrm{out}}(W^n(t))\bigr|>1$. For simplicity, we restrict attention in this case to outsourcing controls. For each $j\in \mathcal{K}_{\mathrm{out}}(W^n(t))$, we choose the index $i^*(j)$ of a pushing direction as follows:
\begin{align}
    i^*(j) \in \operatorname*{argmin}_{i\in \mathcal{A}:\  \tilde{g}_i \cdot \sigma_j > 0} \frac{\tilde{g}_i \cdot \widetilde{G}(t) + \tilde{\alpha}_i}{\tilde{g}_i \cdot \sigma_j},\label{proposed_corner_outside_index}
\end{align}
and we apply the pushing controls
$ \bigl\{i^*(j):j\in \mathcal{K}_{\mathrm{out}}(W^n(t))\bigr\} $
simultaneously, thereby outsourcing the corresponding product types. Under the indexing convention used to define $\mathcal{A}$, the set of outsourced product types is
$ \bigl\{i^*(j)-p:j\in \mathcal{K}_{\mathrm{out}}(W^n(t))\bigr\}$. We proceed in the same manner, when the workload vector is in neither the state space nor the band, i.e., it is beyond the band.

\begin{remark}\label{remark:outsourcing_only_fallback}
The feasible index set in Equations (\ref{proposed_interior_index})--(\ref{proposed_corner_outside_index}) might be empty for some violated face in general. In that case, we define this outsourcing backup policy: at each arrival time $t$, we outsource new arrivals of type-$k$ ($k = 1,\dots,K$) order if its total queue length is at or above bound $q_k^n(\rho)$, i.e., $\sum_{l=1}^{n_k}Q^n_{k,l}(t)\geq q_k^n(\rho)$. This policy remains active until the workload returns to state space $\mathcal{W}_\rho^n$.
This case does not arise in our test problems (see Section \ref{sec: test_problems}).
\end{remark}

\subsection{Proposed Pricing Policy\label{sec: policy_pricing}}

As a preliminary to introducing our proposed pricing policy, we first describe how prices are set dynamically under the drift-rate control policy $\{\zeta(t):t\geq 0\}$ for the Brownian control problem (\ref{BCP}):
\begin{align}
    p^n(t) = \Lambda^{-1}(\lambda^*)+ \frac{\nabla \Lambda^{-1}(\lambda^*)\zeta(t)}{\sqrt{n}}, \ t\geq 0.\label{p_n_Taylor}
\end{align}
Here, $\Lambda^{-1}(\lambda^*)$ is the vector of nominal prices, and $\nabla\Lambda^{-1}(\lambda^*)$ is its Jacobian evaluated at $\lambda^*$. See Appendix \ref{appendix: derivation_policy_pricing} for a derivation of (\ref{p_n_Taylor}) from (\ref{assumption_Lambda_n_mu_n}) and (\ref{lambda_assumption}).

Recall from Lemma \ref{lemma:c_x} and (\ref{learned_drift_control_closed_form}) that the optimal drift-rate control policy for the Brownian control problem is given by
\begin{align}
     -\frac{H^{-1}\tilde{M}'}{2}G_{w_2}(W^n(t)/\sqrt{n};\rho),\ t\geq 0.\label{zeta_ast}
\end{align}
Substituting the proposed choice (\ref{zeta_ast}) for $\zeta(t)$ in (\ref{p_n_Taylor}) yields the proposed pricing policy:
\begin{align}
    p^n(t) = \Lambda^{-1}(\lambda^*) - \frac{1}{2\sqrt{n}} \big(\nabla \Lambda^{-1}(\lambda^*)H^{-1}\tilde{M}'\big) \widetilde{G}(t), \ t\geq 0.\label{pricing_policy}
\end{align}
Here, $\widetilde{G}(t)$ is defined in (\ref{recent_interior_gradient}) and evaluates the gradient on most recently observed workload vector in state space $\mathcal{W}_\rho^n$.

\subsection{Proposed Scheduling Policy\label{sec: policy_scheduling}}

Since scheduling does not have a direct counterpart in the workload problem, we design it separately by building on the maximum-pressure policy (see, e.g., \cite{Dai_Lin_2005_maximum,Dai_Lin_2008,Ata_Lin_2008}). To account for the throughput-time constraints, we augment the standard maximum-pressure policy in two ways. First, we add a route-level congestion term to the pressure of the last buffer on each route, so that jobs on routes close to their throughput-time bounds receive additional priority. Second, when deciding which buffer the server should serve next, we account for safety-stock requirements in the downstream buffer.

More specifically, we let $\Delta_k(t;\tilde{\tau}, \tilde{c})$ represent the congestion level of route $k$ ($k = 1,\dots,K$) at time $t$, where $\tilde{\tau}$ is used in choosing the functional form of such term, and $\tilde{c} \geq 0$ is tuning parameter therein. We provide two specific choices:
\begin{align}
\Delta_k(t;\text{abs}, \tilde{c}) &= \sum_{l'=1}^{n_k} Q^n_{k,l'}(t) - \big(q_k^n(\rho) - \tilde{c}\big),\label{proposed_scheduling_extra_abs} \\
\Delta_k(t;\text{rel}, \tilde{c}) &= \frac{\sum_{l'=1}^{n_k} Q^n_{k,l'}(t)}{q_k^n(\rho)} - \tilde{c}.\label{proposed_scheduling_extra_rel}
\end{align}
For both choices, $\sum_{l'=1}^{n_k} Q^n_{k,l'}(t)$ is the queue length of route $k$ at time $t$ and $q_k^n(\rho)$ is its bound, and $\tilde{c} \geq 0$ is a tuning parameter. The form $\Delta_k(t;\text{abs}, \tilde{c})$ measures route-level congestion by the absolute distance of the route-$k$ queue lengths from its bound, while the form $\Delta_k(t;\text{rel}, \tilde{c})$ measures congestion by the ratio of them. We consider both forms in our policy and treat the choice, i.e., $\tilde{\tau}\in\{\text{abs}, \text{rel}\}$, as a tuning parameter. 

Then, given queue length $Q^n(t)$ at time $t$ in $n$th system and a tuning parameter $\tilde{\lambda}$, we define the pressure term $P_{k,l}(t)$ of buffer $(k,l)$ at time $t$ as
\begin{align}
P_{k,l}(t) = \left\{\begin{array}{ll}  Q^n_{k,l}(t)-Q^n_{k,l+1}(t), & \text{if} \ l < n_k,\\ Q^n_{k,l}(t) +  \tilde{\lambda}\, \Delta_k(t;\tilde{\tau}, \tilde{c}), & \text{if} \ l = n_k,
\end{array}\right.\text{ for } l = 1,\dots,n_k,\ k = 1,\dots,K.\label{scheduling_policy_P}
\end{align}

For each server $i$, the system manager makes prioritization decisions as follows. Suppose that server $i$ does not exercise the discretionary idleness prescribed by the optimal pushing controls for the workload problem. The manager first checks whether the downstream buffer of any nonempty buffer served by server $i$ is below its safety-stock level. If so, server $i$ prioritizes one such buffer, where the choice among such buffers follows a fixed order. Otherwise, for each nonempty buffer $(k,l)\in\mathcal{C}_i$, the system manager computes the index $\mu_{k,l}P_{k,l}(t)$, and server $i$ prioritizes the buffer with the highest index. The ties are broken deterministically according to the fixed order in implementation.

The proposed scheduling policy is a modification of the maximum pressure policy. When $\tilde{\lambda} = 0$ and safety stock $s = 0$ in our setting, the pressure comparison reduces to the standard maximum-pressure policy. The extra term $\Delta_k(t;\tilde{\tau}, \tilde{c})$ for each route $k$ is intended to help meet the associated throughput time constraint. When the system is not congested, this term is small and negative. In this case, meeting the throughput time constraints is not really a concern and the system manager prioritizes the upstream buffers to reduce unnecessary server idleness. However, when the system is congested, this term is larger and positive, reflecting the concern for the throughput time constraints. This leads to prioritizing the last buffer to help meet those constraints.

Tuning parameters in proposed scheduling policy include $\tilde{\tau},\tilde{\lambda}, \tilde{c}$, and $s$. We discuss how to set them, along with other policy parameters, in the next subsection.

\subsection{Choosing Policy Parameters\label{sec: policy_parameters}}

Our proposed policy in Sections \ref{sec: policy_outsourcing}--\ref{sec: policy_scheduling} includes policy parameters $\rho$, $\epsilon^n$, $s$, $\tilde{\tau}$, $\tilde{\lambda}$, $\tilde{c}$. Regarding band-width $\epsilon^n$, we choose it so that
\begin{align}
    \epsilon^n =\epsilon\cdot   n^{\kappa}, \ \kappa \in (0,1/2),\label{epsilon_n_kappa}
\end{align}
where $\kappa$ is a constant and $\epsilon$ is a tuning parameter. The requirement $\kappa \in (0,1/2)$ balances two considerations. First, since $\kappa - 1/2<0$, $\epsilon^n$ is small relative to $\sqrt{n}$, thus the scaled band enclosing the state space is also small. Second, because $\epsilon^n\gg 1$ as $n$ becomes large, the probability that the workload process moves beyond the band is negligible.

Now we have a family of parameters to determine: $\rho$, $\epsilon$, $s$, $\tilde{\tau}$, $\tilde{\lambda}$, $\tilde{c}$. For simplicity, we use a common safety-stock parameter $s$; that is, $s_{k,l}=s$ for all $(k,l)$. We determine the best value of these parameters via simulation-based search, where the objective is to ensure delay probabilities of all routes do not exceed $\varepsilon = 1\%$ while minimizing cost in the original system.
To be specific, for each candidate value of $\rho$, we first solve the corresponding Brownian control problem and obtain the neural-network approximation of the gradient function $G_{w_2}(\cdot;\rho)$ (see Section \ref{sec: computational_method}). We then evaluate the full grid of the remaining five parameters using this gradient $G_{w_2}(\cdot;\rho)$. Repeating the above procedure for every candidate value of $\rho$ amounts to an exhaustive search over the full six-dimensional parameter grid. Although the search is six-dimensional, the candidate sets for several parameters are small (e.g., safety stock $s$ and scheduling form $\tilde{\tau}\in \{\text{abs},\text{rel}\}$), resulting in a total number of parameter combinations on the order of $10^4$ in our test problems, making this brute-force search feasible; see search grid of each parameter in Table \ref{tab:search_grid} of Appendix \ref{appendix: prelimit_implementation_details}.

\section{Test Problems and Benchmark Policies}

\subsection{Test Problems\label{sec: test_problems}}

In this subsection, we introduce two test problems and describe their model primitives. The first is Rybko-Stolyar network \citep{Rybko_Stolyar_1992} whose workload problem is two-dimensional. The second one is the extended criss-cross network, which builds on criss-cross network \citep{Harrison_Wein_1989_scheduling} and provides a set of high-dimensional problems: we consider a $3$-dimensional case and a $10$-dimensional case. For all test problems, we set system number as $n = 400$ and summarize model parameters for $n$th system in Table \ref{tab: Prelimit_parameters}.

\begin{table}[htbp]
\TABLE
{Parameters in the $n$th system where $n = 400$. In the $p$-server extended criss-cross network, numerical values of parameters depend on $p$, the number of servers.\label{tab: Prelimit_parameters}}
{\begin{small}
\setlength{\tabcolsep}{1.2em}
\renewcommand{\arraystretch}{1.0}
\begin{tabular}{lcc}
\toprule
& Rybko--Stolyar network & $p$-server extended criss-cross network \\
\midrule
Demand function $\Lambda^n(x)$    & $(316.66-63.33x_1,\ 316.66-63.33x_2)$ & $(475-95x_1,\dots,475-95x_{p+1})$ \\
Production cost $\delta^n$        & $(1,1)$            & $(1,\dots,1)$ \\
Nominal arrival rate $(\lambda^n)^*$ & $(126.67,126.67)$ & $(190,\dots,190)$ \\
Nominal static price $(p^n)^*$    & $(3,3)$            & $(3,\dots,3)$ \\
Outsourcing cost $\nu^n$          & $(0.05,0.125)$     & $(0.05,\dots,0.05,\ 0.025p)$ \\
Service rate $\mu^n$              & $(400,200,400,200)$ & $(400,\dots,400)$ \\
Average utilization               & $(0.95,0.95)$      & $(0.95,\dots,0.95)$ \\
Throughput time bound $d^n$       & $(0.33,0.33)$      &
  \renewcommand{\arraystretch}{0.85}\begin{tabular}{@{}c@{}}
  $(0.15,\dots,0.15,\ 0.225)$ for $p = 3$\\
  $(0.15,\dots,0.15,\ 0.6)$ for $p = 10$
  \end{tabular} \\
Interest rate $r^n$               & $0.01$             & $0.01$ \\
\bottomrule
\end{tabular}
\end{small}}
{}
\end{table}

\subsubsection{Rybko-Stolyar Network.\label{sec: test_problem_RS_network}%
}

\begin{figure}[htbp]
\FIGURE
{\includegraphics[width=0.65\figbase]{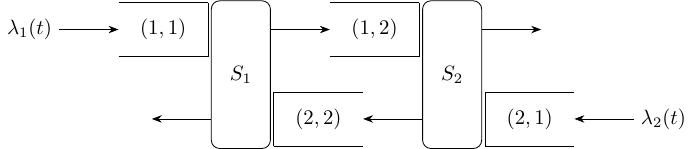}}
{Rybko-Stolyar network. \label{fig:RS_network}}
{}
\end{figure}

As shown in Figure \ref{fig:RS_network}, the Rybko-Stolyar network consists of $p = 2$ servers and $K = 2$ types of products. Demand of product $1$ (resp. $2$) arrives to the system according to a nonhomogeneous Poisson process with instantaneous rate $\lambda^n_1(t)$ (resp. $\lambda^n_2(t)$) for $t \geq 0$. Once arrived, type $1$ (resp. $2$) customer orders need service sequentially at servers $1$ and $2$ (resp. $2$ and $1$), and then leave the system. There are in total $4$ classes of jobs and the jobs of each class reside in the corresponding buffer (represented by open-ended rectangle) both before and during the service. We assume that the service time at each queue is exponentially distributed. Second column of Table \ref{tab: Prelimit_parameters} summarizes all model primitives in Rybko-Stolyar network. The nominal arrival rate and service rate lead to an average utilization of $0.95$ for each server.

Introduced by \cite{Rybko_Stolyar_1992}, the Rybko-Stolyar network provides an example of a system which is unstable under a certain work-conserving static priority scheduling rule and certain parameter region, even when the average utilization of both servers is less than one. Our nominal arrival rate and mean service time in second column of Table \ref{tab: Prelimit_parameters} lie in such a region.

If one regards the time unit in the $n$th system as three months, then the demand rate of each product is around $1.5$ per day, the average total production time for each product is around $2/3$ days, the throughput time bound is about one month, and the annual interest rate is $4\%$.

\subsubsection{Extended Criss-cross
Network.\label{sec: test_problem_extended_CC}}

\begin{figure}[htbp]
\FIGURE
{\includegraphics[width=0.7\figbase]{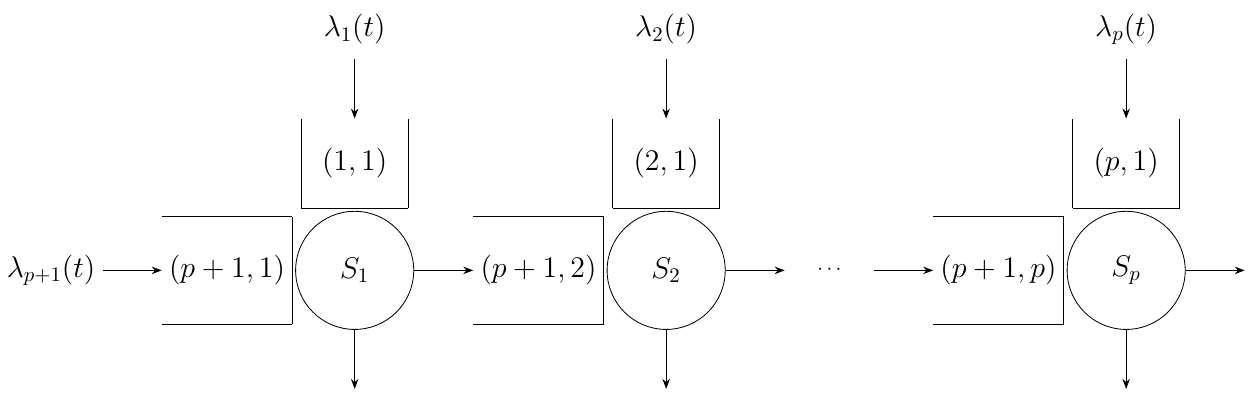}}
{Extended criss-cross network. \label{fig:CC_network}}
{}
\end{figure}

Figure \ref{fig:CC_network} shows a network with $p$ servers and $K = p+1$ routes, which builds on criss-cross network by adding more servers and routes to it, thus we refer to it as extended criss-cross network. In such a network, type $i$ ($i = 1,\dots,p$) product needs service only at server $i$, whereas type $p+1$ product needs service sequentially at all $p$ servers. For convenience, we also refer to route $1,\dots,p$ (resp. $p+1$) as a vertical (resp. the horizontal) route. As we vary $p$ (i.e., the number of servers), we obtain a set of high-dimensional problems. We emphasize the dependence of network on parameter $p$ by calling it $p$-server extended criss-cross network. 

Third column of Table \ref{tab: Prelimit_parameters} summarizes all model primitives in $p$-server extended criss-cross network. As can be seen from the table, our specific choice of model primitives also depends on parameter $p$. Regarding the throughput time bound, we choose the same throughput time bound $d^n_v$ for all vertical routes, and $d^n_h$ for the horizontal route. The values of $d^n_v$ and $d^n_h$ are set to be of moderate magnitude such that the control problem is neither ``trivial'' nor ``infeasible''. Indeed, if the throughput time bounds are too large (resp. small), the control problem becomes less interesting (resp. infeasible), since throughput time requirement will be achieved with nearly zero control (resp. only with an excessive amount of outsourcing). In our computational results we separately consider a $3$-server and a $10$-server case, for which we set $(d^n_v,d^n_h)$ equal to $(0.15,0.225)$ and $(0.15, 0.6)$ respectively. 

If one regards time unit in the $n$th system as three months, then the demand for each product is around $2$ per day, the average production time for each service is around $1/4$ days, the throughput time bound is approximately $2$ weeks for vertical routes and $3$ (resp. $8$) weeks for horizontal route in $3$-server (resp. $10$-server) case, and the annual interest rate is $4\%$.

\subsection{Benchmark Policies\label{sec: benchmark_main_text}}

To assess the performance of our proposed policy, we consider $12$ benchmark policies. More specifically, we consider two alternatives for each of the three policy components that are briefly described in Table \ref{tab: benchmark_three_parts_description} and detailed in Appendices \ref{sec: benchmark_outsourcing}--\ref{sec: benchmark_scheduling}. Regarding the scheduling component, we further include our proposed scheduling policy as an alternative, as it is independent from our neural network-based solution to workload problem. This allows us to distinguish the effect of the scheduling policy from those of the other policy components. Each benchmark policy contains tuning parameters in its three policy components (see Table \ref{tab: benchmark_three_parts} in Appendix \ref{sec: benchmark}), and Appendix \ref{sec: benchmark_combination} explains how these parameters are selected.

\begin{table}[htp]
\TABLE
{Benchmark policies in outsourcing and discretionary idling, pricing, and scheduling. 
\label{tab: benchmark_three_parts_description}}
{\begin{small}
\tabcolsep=0.6em
\renewcommand{\arraystretch}{1.0}
\begin{tabular}{
    p{0.42\figbase}
    p{0.50\figbase}
}
\toprule
Policy & Brief description \\
\midrule

\multicolumn{2}{l}{
\textit{Panel A: Outsourcing and Discretionary Idling Benchmark Policies 
(Appendix \ref{sec: benchmark_outsourcing})}
} \\
\midrule

\textit{Single pushing control per face policy}
&
Assigns a single pushing control to each face of the state space of the workload process based on instantaneous pushing cost, otherwise mimics our proposed outsourcing and discretionary idling policies. \\
\addlinespace[0.4em]

\textit{Threshold-based policy}
&
Outsources an arriving job of a product whenever the total number of jobs on its route exceeds a specified threshold. \\

\midrule
\multicolumn{2}{l}{
\textit{Panel B: Pricing Benchmark Policies 
(Appendix \ref{sec: benchmark_pricing})}
} \\
\midrule

\textit{Linear pricing policy}
&
Follows the dynamic pricing policy (\ref{p_n_Taylor}) prescribed by the Brownian control problem, and assumes a linear dependence of drift control $\zeta(t)$ in (\ref{p_n_Taylor}) on queue lengths in original system. \\
\addlinespace[0.4em]

\textit{High-low pricing policy}
&
Follows the dynamic pricing policy (\ref{p_n_Taylor}) prescribed by the Brownian control problem, and sets different values for drift control $\zeta(t)$ in (\ref{p_n_Taylor}) depending on the number of jobs in each route relative to a specified threshold. \\

\midrule
\multicolumn{2}{l}{
\textit{Panel C: Scheduling Benchmark Policies 
(Appendix \ref{sec: benchmark_scheduling})}
} \\
\midrule

\textit{Proposed scheduling policy}
&
See Section \ref{sec: policy_scheduling}. \\
\addlinespace[0.4em]

\textit{Least-relative-slack policy}
&
Defines an index of each buffer as the age of the oldest job relative to its throughput-time bound, prioritizes the buffer with the highest index, and further incorporates safety stocks. \\
\addlinespace[0.4em]

\textit{Least-relative-slack policy based on queue lengths}
&
Rests on a similar idea as the least-relative-slack policy, but approximates the age using queueing lengths in current and upstream buffers (normalized by arrival rate of its route), and further incorporates safety stocks.  \\

\bottomrule
\end{tabular}
\end{small}}
{}
\end{table}

\section{Computational Results\label{sec: simulation}}

In this section, we present our computational results. We first present our proposed policy in Section \ref{sec: simulation_our_proposed_policy}, including our solution to the workload problem and the parameters of the proposed policy for the original system. Then we compare the performance of our proposed policy with the benchmark policies in Section \ref{sec: simulation_results}, using Monte Carlo simulation. For all test problems, we provide details of the workload problem, the neural-network training, the Monte-Carlo simulation setup, and the computational burden in Appendix \ref{appendix: test_problem_details}.

\subsection{Proposed Policy\label{sec: simulation_our_proposed_policy}}

We first present our proposed policy for the workload problem separately for two test problems, and interpret their direct implication for the outsourcing, discretionary idling, and pricing policy in the original system. Then we describe the parameters of our proposed policy, which are introduced in Section \ref{sec: proposed_policy} and tuned as described in Section \ref{sec: policy_parameters}.

\subsubsection{Proposed Policy for the Workload Problem for the Rybko-Stolyar Network.\label{sec: simulation_our_proposed_policy_workload_RS}}

As a preliminary to presenting our proposed policy for the workload problem, we illustrate the (limiting) workload state space. It is a two-dimensional polyhedron 
\begin{align}
\mathcal{W} = \left\{ w\in \mathbb{R}_+^{2}:  w_1,w_2 \leq 6.67,\ 2w_1 - 8.89 \leq w_2 \leq 0.5w_1 +4.44\right\},\label{W_state_space_RS}
\end{align}
represented by the grey polygon in Figure \ref{fig:RS_state_space}. There are four singular controls $L_1$, $L_2$, $U_1$, and $U_2$, with pushing matrix $\tilde{G}$ and costs vector $\tilde{\alpha}$ given by
\begin{align}
    \tilde{G} = \begin{pmatrix}1&0&-1.05&-2.10\\
0&1&-2.10&-1.05\end{pmatrix}\ \  \text{and} \ \  \tilde{\alpha} = \begin{pmatrix}
    0 & 0 & 1 & 2.5
\end{pmatrix}^\top.\label{singular_direction_cost_RS}
\end{align}
Here, the pushing direction of $i$th $(i = 1,\dots,4)$ singular control is given by $i$th column of matrix $\tilde{G}$. Figure \ref{fig:RS_state_space} plots the pushing directions of four singular controls as arrows with different colors. Note that all singular controls can be applied at any point of the state space, not restricted to the boundary points or the edge visually close to the plotted arrow. 

\begin{figure}[htbp]
\FIGURE
{\includegraphics[width=0.35\figbase]{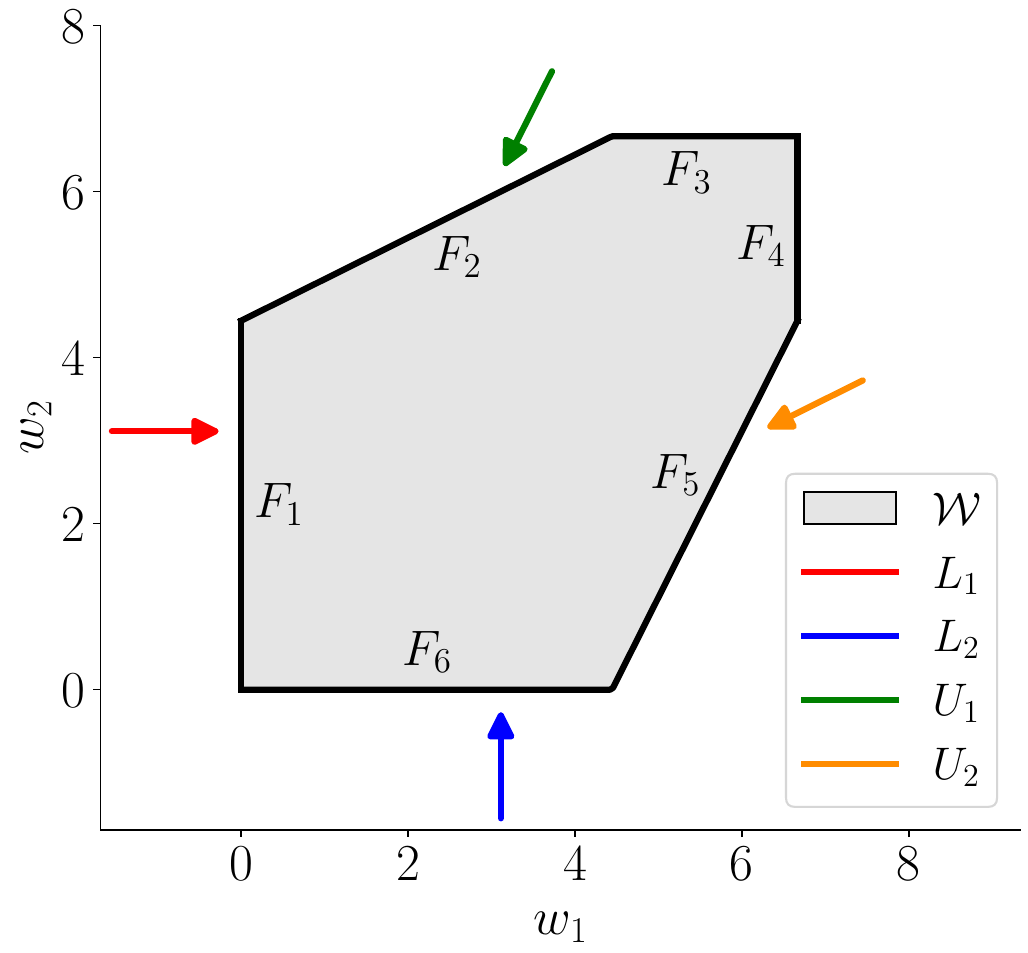}}
{Workload problem of Rybko-Stolyar network. The polygon represents the state space $\mathcal{W}$. Arrows with different colors represent the pushing directions of four singular controls, including $L_i \ (i = 1,2)$ and $U_k \ (k = 1,2)$.\label{fig:RS_state_space}}
{}
\end{figure}

It is worth mentioning that, at each face of state space multiple singular controls might be feasible (i.e., pointing towards the interior of state space), offering the flexibility in choosing singular controls at each boundary point. To facilitate the analysis, we let $F_j$ ($j = 1,\dots,6$) denote six edges of state space $\mathcal{W}$ in the clockwise order starting from the origin (see Figure \ref{fig:RS_state_space}), and identify the set of feasible singular controls on (interior of) each face $F_j$ ($j = 1,\dots,6$):
\begin{align}
    \left(
        \{L_1\} \ \{L_1,U_1\} \ \{U_1,U_2\} \ \{U_1,U_2\} \ \{L_2,U_2\} \ \{L_2\}
    \right). \label{W_cvx_polyhedron_RS_available}
\end{align}
For example, both $L_1$ and $U_1$ can be used on face $F_2$ (i.e., the upper-left edge) when it's necessary to confine the workload process to the state space. 

Recall that our proposed singular control (\ref{learned_boundary_control}) selects one singular control among a feasible set (\ref{W_cvx_polyhedron_RS_available}) for each boundary point. Figure \ref{fig:RS_EWF_solu_singular} draws the proposed active region of each singular control on the boundary of the state space. Two features stand out. First, the singular control $U_2$ with high cost (see Equation (\ref{singular_direction_cost_RS})) is never used, and $L_1$, $L_2$, and $U_1$ together partition the boundary. Second, a single face can host more than one singular control: on face $F_2$ (i.e., the upper-left edge), $L_1$ is used on the left segment and $U_1$ on the right. Intuitively, when both discretionary idling and outsourcing controls are feasible on the same face, free discretionary idling is preferred at small workloads and costly outsourcing at large ones; we will see a similar pattern in next test problem.

\begin{figure}[htbp]
\FIGURE
{\includegraphics[width=0.35\figbase]{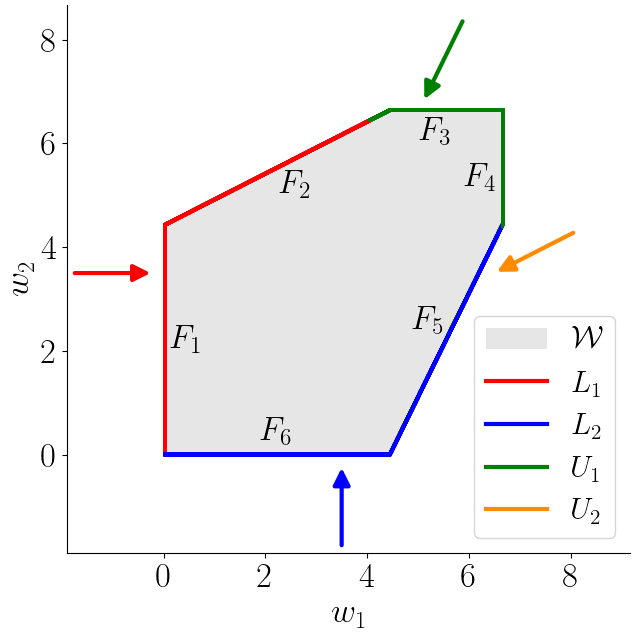}}
{Proposed singular control in workload problem of Rybko-Stolyar network. The red (resp. blue, green) region on the boundary of state space $\mathcal{W}$ represents the proposed active region of singular control $L_1$ (resp. $L_2$, $U_1$), using the same color as the arrow representing its pushing direction. \label{fig:RS_EWF_solu_singular}}
{}
\end{figure}

On face $F_2$ (i.e., the upper-left edge), the active segment of $U_1$ shows the long-term effect incorporated in our proposed singular control (\ref{learned_boundary_control}), i.e., the gradient $G_{w_2}(w)$. Indeed, for a policy that neglects such long-term effect (e.g., the single-pushing benchmark policy in Appendix \ref{sec: benchmark_outsourcing}), the singular control $L_1$ can be used on whole range of face $F_2$. 

Next, we consider the implications of our proposed singular control for the original queueing network. First, we notice that a face in the workload problem represents a specific congestion scenario in the original system. Take face $F_2$ as an example, it represents a scenario that the workload of two servers is not necessarily high but imbalanced, and route $1$ is congested. In fact, on face $F_2$ (i.e., $w_2 =  0.5w_1+ 4.44$), the workload of server $2$ is higher than (half of) that of server $1$ by a significant margin, and only route-$1$ jobs positively contribute to such margin. Second, given a congestion scenario, different feasible singular controls map to different ways in dealing with it. For example, on face $F_2$, an active $U_1$ (resp. $L_1$) suggests that outsourcing route-$1$ product (resp. allocating more service capacity to route $1$) is more economical in reducing congestion in route $1$ when workload is high (resp. low).  

Now we present our proposed drift control for the workload problem. Recall that the proposed drift control (\ref{learned_drift_control_closed_form}) is a two-dimensional vector defined in the (limiting) state space $\mathcal{W}$. We use the left and right panels of Figure \ref{fig:RS_EWF_solu_drift} to show its first and second coordinates across the workload space $(w_1,w_2)$. Three features of proposed drift control stand out. First, each coordinate takes mostly negative values. Second, each coordinate becomes more negative as the corresponding workload component increases. Intuitively, when $w$ lies close to the upper boundary of its state space, stronger drift control is favored because it steers the workload away from the boundary, where costly singular control would otherwise be used. Third, each coordinate is non-linear with respect to workload and is steeper for larger workloads. 

\begin{figure}[htbp]
\FIGURE
{\includegraphics[width=0.6\figbase]{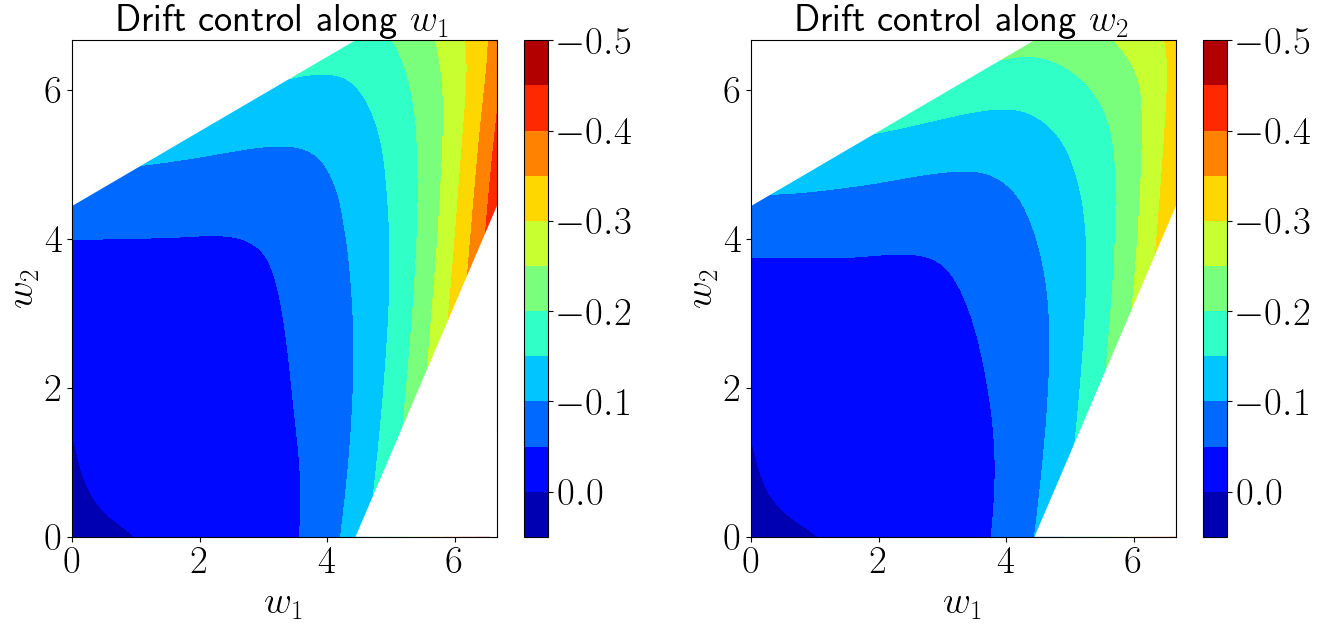}}
{Proposed drift control in workload problem of Rybko-Stolyar network. The left (resp. right) panel plots first (resp. second) coordinate of proposed drift control $\theta^*(w)$ across the workload space $(w_1,w_2)$. \label{fig:RS_EWF_solu_drift}}
{}
\end{figure}

Aforementioned three features in Figure \ref{fig:RS_EWF_solu_drift} suggest the following properties of our proposed pricing control in the original system. First, the pricing control in our problem is mainly used to reduce the congestion level of the system. Second, this role is stronger as the congestion level increases. Third, the pricing control depends non-linearly on queue lengths in the original system.

\subsubsection{Proposed Policy for the Workload Problem for the Extended Criss-cross Network.\label{sec: simulation_our_proposed_policy_workload_ECC}}

As a preliminary to presenting our proposed policy for the workload problem, we describe the state space and singular controls. The state space is a $p$-dimensional bounded convex polyhedron
\begin{align}
    \mathcal{W} = \{ w\in \mathbb{R}_+^{p}:\ w_i \leq w_j + 10d^n_v, \ 1\leq i < j \leq p;\ \ 
    w_i \leq 
    10d^n_v+10d^n_h,\ 1\leq i \leq p\},\label{W_state_space_ECC}
\end{align}
where $d^n_v$ and $d^n_h$ are throughput time bounds for vertical and horizontal routes respectively. There are in total $2p+1$ singular controls, including $p$ idling controls $L_1,\dots,L_{p}$ and $p+1$ outsourcing controls $U_1,\dots,U_{p+1}$. The matrix $\tilde{G}$ of pushing directions and the associated cost rate vector $\tilde{\alpha}$ are given as follows:
$$\tilde{G} = \begin{pmatrix}
    1 & & & -1.05 & &  &-1.05\\
    & \ddots & & & \ddots & &\vdots \\
    & & 1 & & & -1.05 & -1.05
\end{pmatrix}\ \  \text{and} \ \  \tilde{\alpha} = \begin{pmatrix}
    0 & \cdots & 0 & 1 & \cdots & 1 & 0.5p
\end{pmatrix}^\top,$$
where $i$th column of matrix $\tilde{G}$ represents pushing direction of $i$th singular control ($i = 1,\dots,2p+1$). As seen in Equation (\ref{W_state_space_ECC}), the faces of state space $\mathcal{W}$ can be classified into three types; see Table \ref{tab:faces_ECC}. For each type of face, one can identify the set of feasible (i.e., pointing towards the interior of state space) singular controls on (interior of) each face. For the first (resp. second and third) type of faces, there is only one (resp. two) feasible singular control.

\begin{table}[htbp]
\TABLE
{Face classification of state space $\mathcal{W}$ in $p$-server extended criss-cross network.\label{tab:faces_ECC}}
{\begin{small}
\setlength{\tabcolsep}{0.5em}
\renewcommand{\arraystretch}{0.9}
\begin{tabular}{lcccccc}
\toprule
Type & & Face & & Feasible singular controls & & Range \\
\midrule
i   & & $w_i = 0$                   & & $\{L_i\}$          & & $1 \leq i \leq p$ \\
ii  & & $w_i = w_j + 10 d_v^n$      & & $\{L_j, U_i\}$     & & $1 \leq i < j \leq p$ \\
iii & & $w_i = 10 d_v^n + 10 d_h^n$ & & $\{U_i, U_{p+1}\}$ & & $1 \leq i \leq p$ \\
\bottomrule
\end{tabular}
\end{small}}
{}
\end{table}

Next, we describe our proposed policy for the workload problem in $3$-server case (i.e., when $p = 3$). Figure \ref{fig:CC_state_space} shows the state space and the singular controls, where the grey polyhedron represents the $3$-dimensional state space $\mathcal{W}$, and seven arrows with different colors represent pushing directions of singular controls $L_i \ (i = 1,\dots,3)$ and $U_k \ (k = 1,\dots,4)$. As discussed earlier and shown in Table \ref{tab:faces_ECC}, the faces of the state space can be divided into three types. For the first type, there is only one feasible pushing direction for the faces. Thus, the singular-control decision is trivial for those faces. For the second (resp. third) type, the proposed singular control is shown in the first (resp. second) row of graphs in Figure \ref{fig:CC_EWF_solu_singular}. For each of the six figures, two pushing controls are used in different parts of the face. More specifically, for the first row of graphs, an idleness control is used virtually on the entire face and an outsourcing control is used only in a small part of the face. In contrast, for the graphs in the second row, each face is somewhat evenly split between two outsourcing controls.

\begin{figure}[htbp]
\FIGURE
{\includegraphics[width=0.35\figbase]{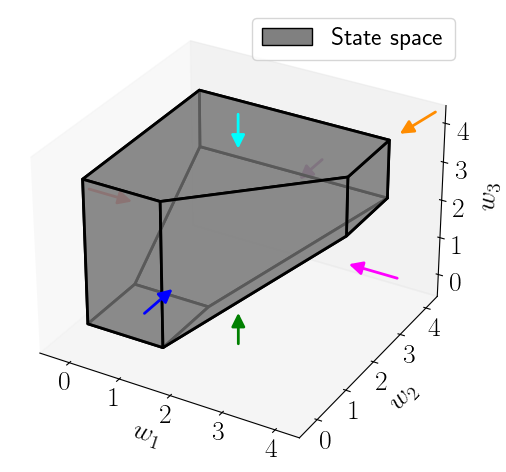}}
{Workload problem of $3$-server extended criss-cross network. The grey polyhedron with black edges represents the state space $\mathcal{W}$. Seven arrows with different colors represent pushing directions of singular controls, including $L_i\ (i = 1,\dots,3)$ and $U_k\ (k = 1,\dots,4)$.\label{fig:CC_state_space}}
{}
\end{figure}

\begin{figure}[htbp]
\FIGURE
{\includegraphics[width=0.6\figbase]{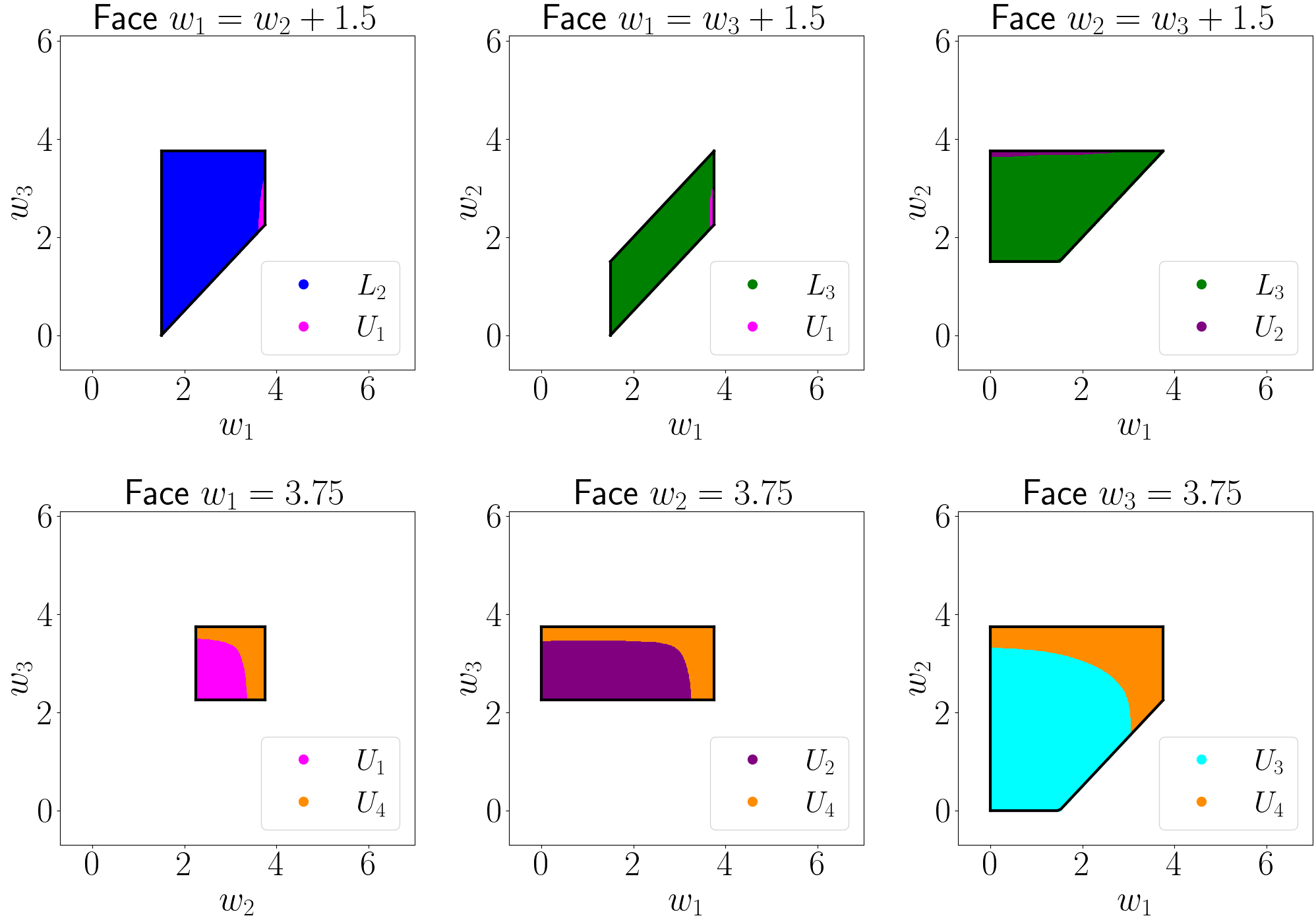}}
{Proposed singular control in workload problem of $3$-server extended criss-cross network. Each panel shows proposed active region of singular controls on each second- or third-type face of state space $\mathcal{W}$ (see Table \ref{tab:faces_ECC}).\label{fig:CC_EWF_solu_singular}}
{}
\end{figure}

To study the performance of our proposed singular control displayed in Figure \ref{fig:CC_EWF_solu_singular}, we compare it with single-pushing benchmark policy (see Appendix \ref{sec: benchmark_outsourcing}), which greedily minimizes instantaneous cost and would use idling controls $L_2,L_3,L_3$ (resp. outsourcing controls $U_1,U_2,U_3$) for three second-type (resp. third-type) faces.

Lastly, we show our proposed drift control in $3$-server problem. Figure \ref{fig:CC_EWF_solu_drift} shows the three coordinates of it in three panels, each plotted as colored contours inside aforementioned three-dimensional polyhedral state space (see Figure \ref{fig:CC_state_space}). Similar to the Rybko-Stolyar problem (see Figure \ref{fig:RS_EWF_solu_drift}), the proposed drift control in Figure \ref{fig:CC_EWF_solu_drift} takes mostly negative values, has each coordinate growing in magnitude with corresponding workload component, and shows non-linearity with respect to workload. 

\begin{figure}[htbp]
\FIGURE
{\includegraphics[width=0.8\figbase]{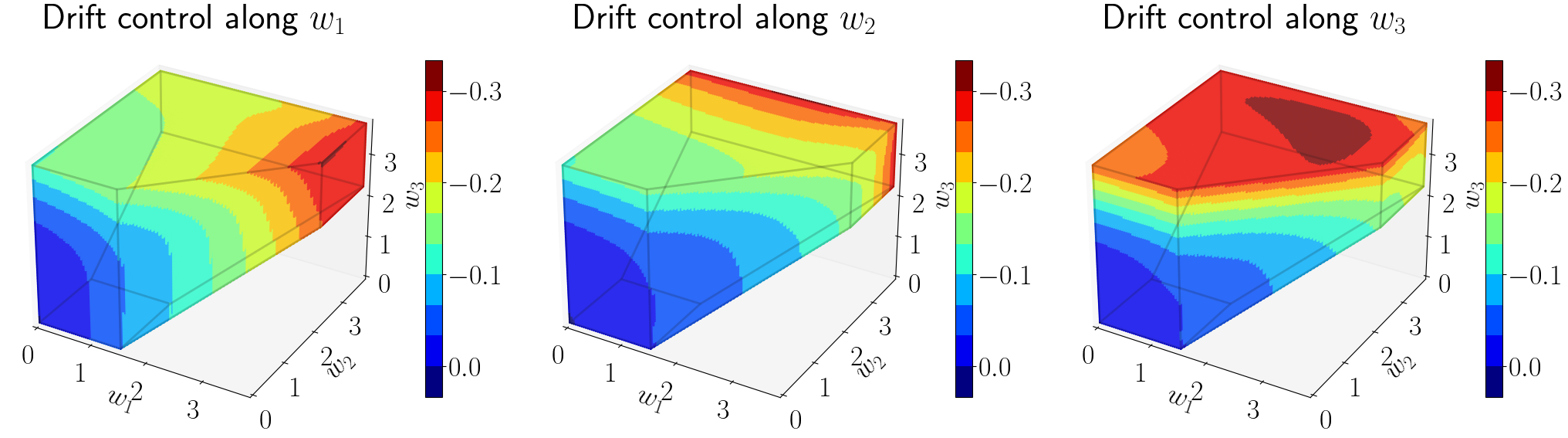}}
{Proposed drift control in workload problem of $3$-server extended criss-cross network. The left (resp. middle, right) panel shows the first (resp. second, third) coordinate of proposed drift control $\theta^*(w)$ on three-dimensional state space $\mathcal{W}$. \label{fig:CC_EWF_solu_drift}}
{}
\end{figure}

For the workload problem in the $10$-server case (i.e., when $p = 10$), a direct visualization of the proposed policy is not feasible due to the high dimensionality of the state space. Since both the proposed singular and drift controls are determined by the gradient of the value function (i.e., $G_{w_2}(\cdot)$), we instead examine two-dimensional slices of the $10$-dimensional gradient. These slices exhibit qualitative patterns similar to those in the $3$-server case; see Appendix \ref{appendix: test_problem_NN_training} for the gradient slices for both cases.

\subsubsection{Tuning the Parameters of Our Proposed Policy for the Original Queueing Network.\label{sec: simulation_our_proposed_policy_original}}

Recall that our proposed policy in the original system (see Section \ref{sec: proposed_policy}) involves tuning parameters $\rho,\epsilon,s,\tilde{\tau},\tilde{\lambda},\tilde{c}$. To choose them, we conduct simulation-based grid search in the original system; see search procedure in Section \ref{sec: policy_parameters} and implementation details in Appendix \ref{appendix: prelimit_implementation_details}. We set $\tilde{\tau}=\text{abs}$ in the Rybko-Stolyar problem (see Section \ref{sec: test_problem_RS_network}) to avoid redundant computation, because two scheduling form choices $\tilde{\tau}\in\{\text{abs},\text{rel}\}$ for that test problem are equivalent subject to a parameter transformation in $\tilde{\lambda}$ and $\tilde{c}$.

Table \ref{tab:optimal_parameter_proposed} summarizes the tuning parameters for our proposed policy. Having $\rho \approx 1$ in the Rybko-Stolyar problem means directly replacing throughput-time bound with queue lengths bound would lead to delay probabilities around $\varepsilon = 1\%$. Having $\rho < 1$ in two extended criss-cross problems suggests that a tighter queue-length bound thus a stricter outsourcing policy is required to achieve delay-probability constraint. In our grid search we set $\epsilon^n= \epsilon\cdot n^{1/4}$, i.e., $\kappa = 1/4$ in Equation (\ref{epsilon_n_kappa}), and search over $\epsilon$. The reported value of $\epsilon$ with order $1$ implies a small band width $\epsilon^n$ compared to (unscaled) state space $\mathcal{W}_\rho^n$ with order $\sqrt{n}$. The safety stock is $s = 1$ for the Rybko-Stolyar problem and $s = 0$ for both extended criss-cross problems. 

\begin{table}[htbp]
\TABLE
{Tuning parameters for our proposed policy in the original system.\label{tab:optimal_parameter_proposed}}
{\begin{small}
\setlength{\tabcolsep}{0.3em}
\renewcommand{\arraystretch}{0.9}
\begin{tabular}{lcccccccc}
\toprule
Test problem & & Bound scale $\rho$ & & Band width $\epsilon$ & & Safety stock $s$ & & Scheduling ($\tilde{\tau}$, $\tilde{\lambda}$, $\tilde{c}$) \\
\midrule
Rybko-Stolyar   
& & $1.02$ & & $1$ & & $1$          
& & $(\text{abs},20, 13)$   \\
$3$-server extended criss-cross  
& & $0.91$ & & $1$ & & $0$          
& & $(\text{rel},80, 0.87)$   \\
$10$-server extended criss-cross 
& & $0.93$ & & $1.5$ & & $0$          
& & $(\text{abs},6, 6)$   \\
\bottomrule
\end{tabular}
\end{small}}
{}
\end{table}

\subsection{Comparison with Benchmark Policies in Prelimit Simulation\label{sec: simulation_results}}

Next, we examine the performance of our proposed policy by comparing it with the $12$ benchmark policies (see brief description in Section \ref{sec: benchmark_main_text} and details in Appendix \ref{sec: benchmark}) using Monte Carlo simulation in the original system. For our proposed policy, the tuning parameters were given in Table \ref{tab:optimal_parameter_proposed}. For benchmark policies, we conduct similar simulation-based grid search to tune their parameters in the original system; see search procedure in Appendix \ref{sec: benchmark_combination} and implementation details in Appendix \ref{appendix: prelimit_implementation_details}. The resulting parameter values for each benchmark policy are given in Appendix \ref{appendix: test_problem_optimal_parameters}.

We report the cost for each policy in Table \ref{tab: prelimit_simulation}, with standard errors shown in parentheses. Three panels correspond to results in three test problems. In each panel, the cost of proposed policy is given in first row with bold font, and each benchmark is represented by its outsourcing and discretionary idling, pricing, and scheduling components. We highlight the cost of the best benchmark (i.e., the benchmark with lowest cost) for each test problem by underlining it. For the delay performance of each policy included in Table \ref{tab: prelimit_simulation}, see Appendix \ref{appendix: test_problem_delay_performance} for further details. 

\begin{table}[t]
\TABLE
{Simulation results in the original system. Least-relative-slack (queues) denotes least-relative-slack policy based on queue lengths. Each policy uses the policy parameters given in Appendix \ref{appendix: test_problem_optimal_parameters} that are obtained by a simulation-based search.\label{tab: prelimit_simulation}}
{\begin{small}
\setlength{\tabcolsep}{0.5em}
\renewcommand{\arraystretch}{0.9}
\begin{tabular}{lccccccc}
\toprule
\multicolumn{8}{l}{\textit{Panel A: Rybko-Stolyar network}} \\
\midrule
Proposed policy & & & \multicolumn{5}{c}{$\mathbf{2.93\ (0.03)}$} \\
\midrule
 & & & 
 \multicolumn{2}{c}{Single-pushing} & & \multicolumn{2}{c}{Threshold-based} \\
\cmidrule(lr){4-5} \cmidrule(lr){7-8}
Benchmark policies & & & Linear pricing & High-low pricing
           & & Linear pricing & High-low pricing \\
\midrule
Proposed scheduling   & & & \uline{$3.24\ (0.04)$}  & $3.25\ (0.04)$  & & $3.64\ (0.04)$  & $3.59\ (0.04)$  \\
Least-relative-slack  & & & $4.35\ (0.04)$  & $4.42\ (0.04)$  & & $5.92\ (0.05)$  & $6.01\ (0.05)$  \\
Least-relative-slack (queues)
    & & & $4.03\ (0.04)$  & $4.15\ (0.04)$  & & $5.66\ (0.04)$  & $5.80\ (0.05)$  \\
\midrule
\multicolumn{8}{l}{\textit{Panel B: $3$-server extended criss-cross network}} \\
\midrule
Proposed policy & & & \multicolumn{5}{c}{$\mathbf{13.37\ (0.14)}$} \\
\midrule
 & & & 
 \multicolumn{2}{c}{Single-pushing} & & \multicolumn{2}{c}{Threshold-based} \\
\cmidrule(lr){4-5} \cmidrule(lr){7-8}
Benchmark policies & & & Linear pricing & High-low pricing
           & & Linear pricing & High-low pricing \\
\midrule
Proposed scheduling   & & & \uline{$14.60\ (0.15)$} & $14.84\ (0.16)$ & & $15.52\ (0.15)$ & $15.63\ (0.15)$ \\
Least-relative-slack  & & & $16.60\ (0.16)$ & $16.73\ (0.16)$ & & $17.77\ (0.15)$ 
& $18.25\ (0.15)$ \\
Least-relative-slack (queues)
& & & $23.59\ (0.17)$ & $24.20\ (0.17)$ & & $20.44\ (0.16)$             
& $20.71\ (0.17)$ \\
\midrule
\multicolumn{8}{l}{\textit{Panel C: $10$-server extended criss-cross network}} \\
\midrule
Proposed policy & & & \multicolumn{5}{c}{$\mathbf{17.37\ (0.45)}$} \\
\midrule
 & & & 
 \multicolumn{2}{c}{Single-pushing} & & \multicolumn{2}{c}{Threshold-based} \\
\cmidrule(lr){4-5} \cmidrule(lr){7-8}
Benchmark policies & & & Linear pricing & High-low pricing
           & & Linear pricing & High-low pricing \\
\midrule
Proposed scheduling   
& & & $18.93\ (0.46)$ & \uline{$18.92\ (0.44)$} & & $23.11\ (0.56)$ & $25.80\ (0.58)$ \\
Least-relative-slack  
& & & $28.43\ (0.62)$ & $28.49\ (0.58)$ & & $36.20\ (0.54)$ & $37.35\ (0.59)$ \\
Least-relative-slack (queues)
 & & & $43.97\ (0.49)$ & $46.28\ (0.62)$ & & $45.85\ (0.62)$ & $48.46\ (0.70)$ \\
\bottomrule
\end{tabular}
\end{small}}
{}
\end{table}

Table \ref{tab: prelimit_simulation} shows that our proposed policy significantly outperforms all $12$ benchmark policies. Among the benchmark policies, the one that combines single-pushing, linear pricing, and the proposed scheduling policy performs the best in the Rybko-Stolyar and $3$-server test problems, whereas the one that combines single-pushing, high-low pricing, and the proposed scheduling policy performs the best in the $10$-server test problem. In the Rybko-Stolyar test problem (see Panel A), our proposed policy has cost $2.93$ and reduces the cost by $9.6\%$ in comparison with the best benchmark with cost $3.24$. For the $3$-server test problem (see Panel B), the best benchmark has cost of $14.60$ whereas our proposed policy has a cost of $13.37$, an $8.4\%$ improvement. Similarly, for the $10$-server test problem (see Panel C), the best benchmark has a cost of $18.92$ whereas that for our proposed policy is $17.37$, an $8.2\%$ improvement.

Additionally, the comparison of different benchmark policies presented in Table \ref{tab: prelimit_simulation} shows that the proposed scheduling policy (inspired by the maximum pressure policy) is superior to the other two scheduling rules across three test problems. 

As an illustration, we present delay time distribution under different policies. Across our test problems, we find that the shape of delay time distribution mainly depends on the scheduling policy used. Therefore, we consider one representative policy for each scheduling policy. Figure \ref{fig: delay_time_RS} shows the results in the Rybko-Stolyar problem: the left panel for proposed policy with $\tilde{\tau} = \text{abs}$, the middle (resp. right) panel for the benchmark policy using single-pushing, linear pricing, and the least-relative-slack policy (resp. least-relative-slack policy based on queue lengths). Figure \ref{fig: delay_time_CC3} repeats the same representative policies for $3$-server problem, but separately considers two forms of proposed policy (i.e., $\tilde{\tau} \in \{\text{abs}, \text{rel}\}$). The figure for $10$-server problem is deferred to Appendix \ref{appendix: test_problem_delay_performance}, due to its similarity to the $3$-server one.

\begin{figure}[tbp]
\FIGURE
{\includegraphics[scale=0.8]{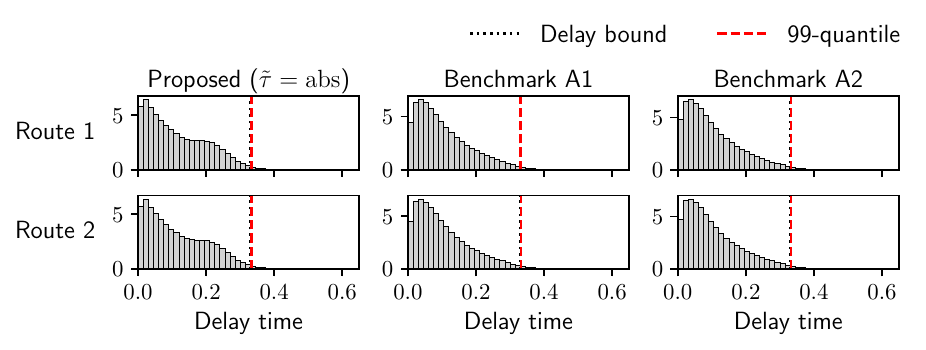}}
{Delay time histogram under the proposed policy and representative benchmark policies in the Rybko-Stolyar network. Benchmark A1 (resp. A2): the benchmark policy using single-pushing, linear pricing, and least-relative-slack policy (resp. least-relative-slack policy based on queue lengths). \label{fig: delay_time_RS}}
{}
\end{figure}

\begin{figure}[htbp]
\FIGURE
{\includegraphics[scale=0.8]{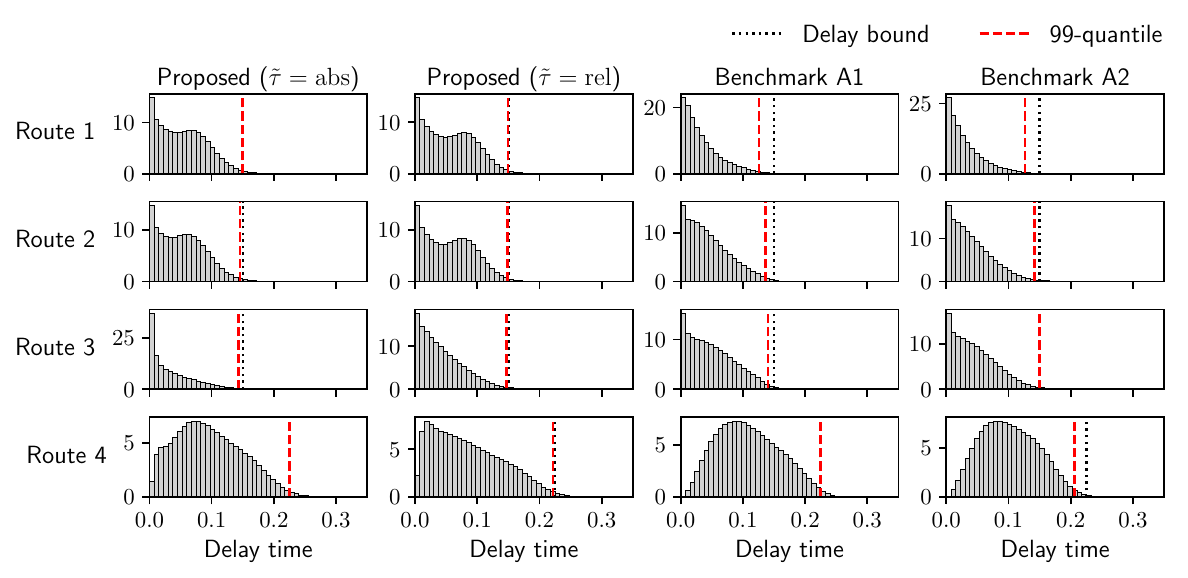}}
{Delay time histogram under the proposed policy and representative benchmark policies in the 3-server extended criss-cross network. Benchmark A1 (resp. A2): the benchmark policy using single-pushing, linear pricing, and least-relative-slack policy (resp. least-relative-slack policy based on queue lengths).\label{fig: delay_time_CC3}}
{}
\end{figure}

In each panel of Figures \ref{fig: delay_time_RS}--\ref{fig: delay_time_CC3}, the dotted line and the dashed red line mark the delay time bound and the 99-quantile of the delay time distribution respectively. The 99-quantile lying to the left of the bound indicates that the delay probability constraint $\varepsilon = 1\%$ is satisfied; a larger gap between the two lines indicates that the constraint is less binding on that route. Three observations stand out. First, the delay-probability constraints are overall more binding for routes in our proposed policy than those in two benchmark policies. Second, for jobs not violating delay-time bound, jobs of certain route (e.g., both routes in Figure \ref{fig: delay_time_RS} and first two routes in Figure \ref{fig: delay_time_CC3}) under our proposed policy spend on average longer time in system than those under two benchmark policies. Lastly, two forms of our proposed scheduling policy, i.e., $\tilde{\tau}\in \{\text{abs},\text{rel}\}$, have different behaviors in balancing last buffers of different routes; see routes $3$ and $4$ in Figure \ref{fig: delay_time_CC3}.

\section{Concluding Remarks\label{sec: conclusion}}

We study the dynamic control problem of multiclass queueing networks with throughput-time constraints, and develop an effective policy in high-dimensional settings. Our approach approximates the original problem by a Brownian control problem, reduces it to a lower-dimensional workload problem, and solves the resulting HJB equation using neural networks. This reduction provides both computational tractability and structural interpretability. Computationally, we are able to extend existing computational methods in \cite{Ata_Harrison_Si_2024_drift, Ata_Harrison_Si_2024_singular} to the bounded convex polyhedral state spaces arising in our setting. 

Despite its tractability and interpretability, translating the solution of the workload problem into implementable policy in the original system requires an intricate policy design. First, we select the proposed singular control only among those pushing towards interior of the state space.
Second, we introduce necessary modifications in the prelimit policy, such as bands and safety stocks, which allow the workload solution to be implemented in the original queueing network. Third, we propose a scheduling policy that builds on maximum-pressure policy and is tailored for our setting with throughput-time constraints. Overall, our policy consistently outperforms the benchmark policies by $5\%$--$10\%$ in our test problems.

In extending the computational method in \cite{Ata_Harrison_Si_2024_drift, Ata_Harrison_Si_2024_singular} to our bounded-state-space setting, we observe an additional tradeoff in reference-policy design. Besides adequately covering regions frequently visited under the optimal policy, the reference process should also generate sufficiently many samples near the boundary, particularly the upper faces. Such coverage is important for two reasons. First, the boundary conditions play a major role in shaping the gradient function throughout the state space. Second, costly singular controls are activated around the boundary, making accurate approximation there essential for control decisions. Our numerical results illustrates this tradeoff: moderately positive reference drifts lead to successful training, whereas the proposed drift controls are mostly negative. Intuitively, the former generates more visits to the upper faces, while the latter tends to avoid such costly visits.

\begingroup
\parindent 0pt \parskip 0.0ex
\def\enotesize{\normalsize}
\endgroup

\bibliographystyle{informs2014}
\bibliography{RBM_TPT}

\newpage
\appendix
\setcounter{equation}{0}
\renewcommand{\theequation}{\Alph{section}.\arabic{equation}}

\section{Notations in Brownian Control Problem and Workload Problem \label{appendix: notation_derivation_BCP}}

\subsection{Notations in Approximating Brownian Control
Problem\label{appendix: notation_BCP}}

In Equations (\ref{BCP_Z_k_1})--(\ref{BCP_Z_k_l}) defining (scaled) queue length process $Z$, $X_{k}=(X_{k,l})$ is an $n_{k}$-dimensional Brownian motion
with drift vector $\varphi_{k}$ and covariance matrix
$\Sigma_{k}$, for which $\varphi
_{k}=(\varphi_{k,l})$ is a $n_k$-dimensional vector with
\begin{align}
\varphi_{k,1}  &  =-\eta_{k,1}x_{k,1}^{\ast},\label{BCP_BM_drift_1}\\
\varphi_{k,l}  &  =\eta_{k,l-1}x_{k,l-1}^{\ast}-\eta_{k,l}%
x_{k,l}^{\ast},\quad l=2,\ldots,n_{k}, \label{BCP_BM_drift_l}%
\end{align}
and $\Sigma_{k}=(\Sigma_{k,l_1,l_2})$ is a $n_k\times n_k$ dimensional matrix with entry
\begin{equation}
\Sigma_{k,l_1,l_2}=\left\{
\begin{array}
[c]{ll}%
\lambda_{k}^{\ast}\big(1+v_{k,1}^{2}\big), & \text{if }l_1=l_2=1,\\
\lambda_{k}^{\ast}\big(v_{k,l_1-1}^{2}+v_{k,l_1}^{2}\big), & \text{if }l_1=l_2>1,\\
-\lambda_{k}^{\ast}v_{k,l_2}^{2}, & \text{if }l_2=l_1-1,\\
-\lambda_{k}^{\ast}v_{k,l_1}^{2}, & \text{if }l_2=l_1+1,\\
0, & \text{otherwise,}%
\end{array}
\right.\quad \text{for } l_1,l_2=1,\ldots,n_{k}.  \label{BCP_BM_variance}%
\end{equation}
We then use $\varphi$ and $\Sigma$ to represent drift and variance of $m$-dimensional Brownian motion $X = (X_{k,l})$:
\begin{align}
\varphi = \begin{pmatrix}
    \varphi_{1}\\
    \vdots \\ 
    \varphi_{K}
\end{pmatrix},\quad 
\Sigma = \begin{pmatrix}
    \Sigma_{1} & &\\
    & \ddots  &\\
    & & \Sigma_{K}
\end{pmatrix},\label{X_BM_drift_variance}
\end{align}
where $\varphi_k$ and $\Sigma_k$ ($k = 1,\dots,K$) are defined in (\ref{BCP_BM_drift_1})--(\ref{BCP_BM_variance}).

\subsection{Notations in Workload Problem\label{appendix: notation_EWF}}

In Equation (\ref{workload_process_diffusion}) defining workload process $W$, $\xi = (\xi_i)$ is a $p$-dimensional Brownian motion with drift $\gamma$ and variance $\Gamma$ defined as
\begin{equation}
\gamma=M\varphi,\quad \Gamma=M\Sigma M^{\top}.
\label{gamma_Gamma_def}%
\end{equation}
Here, $\varphi$ and $\Sigma$ are defined in (\ref{X_BM_drift_variance}). The matrix $M = (M_{i,(k,l)})$ is a $p\times m$ matrix with each
component defined as%
\begin{equation}
M_{i,(k,l)}=\sum_{l':(k,l')\in \mathcal{C}_{i},l'\geq l}m_{k,l'},\label{M_matrix}%
\end{equation}
for $i=1,\ldots,p$, $k=1,\ldots,K$, and $l=1,\ldots,n_{k}$. Indeed, $M$ is the
matrix such that the definition (\ref{workload_process_diffusion}) of workload process $W$ and the definition (\ref{workload_BM}) of Brownian motion $\xi$ can be written into the matrix forms:
\begin{equation}
W(t)=MZ(t), \quad \xi(t)=MX(t). \label{W_Z_matrix_B_X}%
\end{equation}

\section{Derivations and Proofs}\label{appendix: proofs}

\subsection{Derivations of Approximating Brownian Control
Problem\label{appendix: derivation_BCP}}

In what follows, we formally derive the Brownian approximation of (scaled) queue
length process and cumulative cost process in the heavy-traffic limit.

\subsubsection{(Scaled) Queue Length Process.}

For (scaled) queue length process $Z^{n}$, we separately consider $Z^n_{k,1}(t)$ and $Z^n_{k,l}(t)$ ($l = 2,\dots,n_k$). First, regarding $Z_{k,1}^{n}(t)$ defined by Equations (\ref{Q_k_1}) and (\ref{Z_n_U_n}), i.e.,
\begin{align}
    Z_{k,1}^{n}(t) = Z_{k,1}^{n}(0) + \frac{1}{\sqrt{n}} \left(N_{k}\left(  \int_{0}^{t}\lambda^n_{k}(s)ds\right)
-S^n_{k,1}(T^n_{k,1}(t))-O^n_{k}(t)\right),
\end{align}
we decompose it into
\begin{align}
Z_{k,1}^{n}(t)  &  
 =Z_{k,1}^{n}(0) +\frac{1}{\sqrt{n}}\left(  N_{k}\left(  \int_{0}^{t}\lambda_{k}%
^{n}(s)ds\right)  -\int_{0}^{t}\lambda_{k}^{n}(s)ds\right)
  +\frac{1}{\sqrt{n}}\left(  \int_{0}^{t}\lambda_{k}^{n}(s)ds-\mu_{k,1}%
^{n}x_{k,1}^{\ast}t\right) \nonumber\\
&  +\frac{1}{\sqrt{n}}\left(  \mu_{k,1}^{n}x_{k,1}^{\ast}t-\mu_{k,1}%
^{n}T_{k,1}^{n}(t)\right)  -\frac{1}{\sqrt{n}}\left(  S_{k,1}^{n}(T_{k,1}^{n}(t))-\mu_{k,1}%
^{n}T_{k,1}^{n}(t)\right)  -\frac{1}{\sqrt{n}}O_{k}^{n}(t). \label{Z_n_k_1}%
\end{align}
In what follows, we separately derive the limit of five terms (except initial value $Z_{k,1}^{n}(0)$) on right-hand side of (\ref{Z_n_k_1}), after which we will combine them and obtain limit of $Z_{k,1}^n$. 
\begin{itemize}
    \item[1)] For the first term, we have
\begin{equation}
\frac{1}{\sqrt{n}}\left(  N_{k}\left(  \int_{0}^{t}\lambda_{k}^{n}%
(s)ds\right)  -\int_{0}^{t}\lambda_{k}^{n}(s)ds\right)  =\sqrt{\lambda
_{k}^{\ast}}B_{k}(t)+o(1), \label{Z_n_k_1_part1_derivation}%
\end{equation}
where $B_{k}$ is one-dimensional standard Brownian motion. It follows from the functional strong
approximations 
\begin{equation}
N(nt)=nt+\sqrt{n}B(t)+o(\sqrt{n})\text{ a.s.},\nonumber
\end{equation}
and assumption (\ref{lambda_assumption}) on demand rate $\lambda^n(t)$ that 
$$\lambda_k^n(t) = n\lambda_k^* + \mathcal{O}(\sqrt{n}).$$
\item[2)] For the second term, it comes from assumptions (\ref{assumption_Lambda_n_mu_n}) and (\ref{lambda_assumption}) on service rate $\mu^n$ and demand rate $\lambda^n(t)$, and definition $x^*_{k,l} = \lambda_k^*/\mu_{k,l}$ that 
\begin{align}
\frac{1}{\sqrt{n}}\left(  \int_{0}^{t}\lambda_{k}^{n}(s)ds-\mu_{k,1}%
^{n}x_{k,1}^{\ast}t\right) &  =\frac{1}{\sqrt{n}}\left(  n\lambda_{k}^{\ast}t+\sqrt{n}\int_{0}^{t}%
\zeta_{k}(s)ds-(n\mu_{k,1}+\sqrt{n}\eta_{k,1})x_{k,1}^{\ast}t\right)\nonumber\\
&  =\int_{0}^{t}\zeta_{k}(s)ds-\eta_{k,1}x_{k,1}^{\ast}t.
\label{Z_n_k_1_part2_derivation}%
\end{align}
\item[3)] For the third term, definition (\ref{scaled_scheduling_process}) of centered and scaled allocation process $Y^n(t)$ and assumption (\ref{assumption_Lambda_n_mu_n}) on service rate $\mu^n$ give that
\begin{equation}
\frac{1}{\sqrt{n}}\left(  \mu_{k,1}^{n}x_{k,1}^{\ast}t-\mu_{k,1}%
^{n}T_{k,1}^{n}(t)\right)  = \frac{1}{n} \mu_{k,1}^n Y_{k,1}^n(t) = \mu_{k,1}Y_{k,1}^n(t) + o(1). \label{Z_n_k_1_part3_derivation}%
\end{equation}
\item[4)] For the fourth term, as $x_{k,1}^*$ serves as a first-order approximation to $T_{k,1}^n(t)$, we have
\begin{equation}
\frac{1}{\sqrt{n}}\left(  S_{k,1}^{n}(T_{k,1}^{n}(t))-\mu_{k,1}%
^{n}T_{k,1}^{n}(t)\right)   \sim \frac{1}{\sqrt{n}}\left(  S_{k,1}%
^{n}(x_{k,1}^{\ast}t)-\mu_{k,1}^{n}x_{k,1}^{\ast}t\right) \Rightarrow  \operatorname*{BM}\left(0,\lambda_{k}^{\ast}v_{k,1}^{2}\right).
\label{Z_n_k_1_part4_derivation}%
\end{equation}
\item[5)] For the fifth term, Equation (\ref{Z_n_U_n}) leads to 
\begin{equation}
\frac{1}{\sqrt{n}}O_{k}^{n}(t) = U_k^n(t).
\label{Z_n_k_1_part5_derivation}%
\end{equation}
\end{itemize}
Combining Equations (\ref{Z_n_k_1_part1_derivation})--(\ref{Z_n_k_1_part5_derivation}) and let $Y^n \Rightarrow Y$, $U^n \Rightarrow U$, we have $Z_{k,1}^{n}  \Rightarrow Z_{k,1}$, where $Z_{k,1}$ is as defined in (\ref{BCP_Z_k_1}) with $X_{k,1}(t)$ defined by
\begin{equation}
    X_{k,1}(t) = \sqrt{\lambda_{k}^{\ast}}B_{k}(t) -\eta_{k,1}x_{k,1}^{\ast}t - \operatorname*{BM}\left(0,\lambda_{k}^{\ast}v_{k,1}^{2}\right).\label{X_k_1_appendix}
\end{equation}
Similarly, we can decompose $Z_{k,l}^{n}(t)$ ($l = 2,\dots,n_k$) as follows based on Equations (\ref{Q_k_l}) and (\ref{Z_n_U_n}):
\begin{align}
Z_{k,l}^{n}(t)  &  =Z_{k,l}^{n}(0) + \frac{1}{\sqrt{n}}\left(  S_{k,l-1}^{n}(T_{k,l-1}^{n}(t))-\mu
_{k,l-1}^{n}T_{k,l-1}^{n}(t)\right)  -\frac{1}{\sqrt{n}}\left(  S_{k,l}^{n}(T_{k,l}^{n}(t))-\mu_{k,l}%
^{n}T_{k,l}^{n}(t)\right) \nonumber \\
&  -\frac{1}{\sqrt{n}}\left(  \mu_{k,l-1}^{n}x_{k,l-1}^{\ast}%
t-\mu_{k,l-1}^{n}T_{k,l-1}^{n}(t)\right)  +\frac{1}{\sqrt{n}}\left(  \mu_{k,l}^{n}x_{k,l}^{\ast}t-\mu_{k,l}%
^{n}T_{k,l}^{n}(t)\right) \nonumber \\
&+\frac{1}{\sqrt{n}}\left(  \mu_{k,l-1}^{n}x_{k,l-1}^{\ast}t-\mu
_{k,l}^{n}x_{k,l}^{\ast}t\right)  , \text{ for } l = 2,\dots,n_k.  \label{Z_n_k_l}%
\end{align}
Taking each term on right-hand side of Equation (\ref{Z_n_k_l}) to limit, we have $Z_{k,l}^n \Rightarrow Z_{k,l}$, where 
\begin{align}
Z_{k,l}(t) &=  Z_{k,l}(0) + \operatorname*{BM}\left(0,\lambda_{k}^{\ast
}v_{k,l-1}^{2}\right)-\operatorname*{BM}\left(0,\lambda_{k}^{\ast}v_{k,l}^{2}%
\right)-\mu_{k,l-1}Y_{k,l-1}(t)+\mu_{k,l}Y_{k,l}(t)\nonumber\\
&  +(\eta_{k,l-1}x_{k,l-1}^{\ast}t-\eta_{k,l}x_{k,l}^{\ast}t).
\label{Z_n_k_l_limit}%
\end{align}
Here, five terms on right hand side of (\ref{Z_n_k_l_limit}) are limit of 
five terms in  (\ref{Z_n_k_l}). Let 
\begin{align}
    X_{k,l}(t) = \operatorname*{BM}\left(0,\lambda_{k}^{\ast
}v_{k,l-1}^{2}\right)-\operatorname*{BM}\left(0,\lambda_{k}^{\ast}v_{k,l}^{2}%
\right) +(\eta_{k,l-1}x_{k,l-1}^{\ast}t-\eta_{k,l}x_{k,l}^{\ast}t),\label{X_k_l_appendix}
\end{align}
we arrive at definition (\ref{BCP_Z_k_l}) for $Z_{k,l}(t)$. 

Lastly, based on expressions (\ref{X_k_1_appendix}) and (\ref{X_k_l_appendix}) of $X_{k,l}(t)$, $l = 1,\dots,n_k$, where $B_k(t)$ and $ \operatorname*{BM}\big(0,\lambda_{k}^{\ast}v_{k,l}^{2}%
\big)$, $l = 1,\dots,n_k$ are $n_k+1$ mutually independent Brownian motions, we obtain drift $\varphi_k$ and variance $\Sigma_k$ of $n_k$-dimensional Brownian motion $X_k$ given in (\ref{BCP_BM_drift_1})--(\ref{BCP_BM_variance}).

\subsubsection{Cumulative Cost Process.}

The cumulative cost process defined in (\ref{cumulative_cost_n}) can be
written as%
\begin{equation}
C^{n}(t)=n\Pi(\lambda^{\ast})t-\left(  \int_{0}^{t}\Pi^{n}(\lambda
^{n}(s))ds-\nu^{n}\cdot O^{n}(t)\right)  . \label{xi_n_t}%
\end{equation}
The assumption (\ref{lambda_assumption}) on demand rate $\lambda^{n}(t)$ and the property
(\ref{property_revenue}) of profit rate function $\Pi^{n}$ imply that
\begin{align}
\Pi^{n}(\lambda^{n}(t))    =\Pi^{n}(n\lambda^{\ast}+\sqrt{n}\zeta
(t)) =n\Pi\left(  \lambda^{\ast}+\frac{1}{\sqrt{n}}\zeta(t)\right). \nonumber
\end{align}
And then Assumptions \ref{assumption: profit_function} and \ref{assumption: static_planning} leads to a Taylor expansion of $\Pi$ around $\lambda^*$:
\begin{align}
\Pi^{n}(\lambda^{n}(t)) = n\left(  \Pi(\lambda^{\ast})+\frac{1}{2n}\zeta(t)^{\prime}\nabla^{2} \Pi(\lambda^{\ast})\zeta(t)\right)  +o(1).\label{revenue_n_expand}
\end{align}
Plugging (\ref{revenue_n_expand}) into (\ref{xi_n_t}), and
noting that $\nu^{n}=\alpha/\sqrt{n}$ and $O^{n}/\sqrt
{n}=U^n \Rightarrow U$, we have $C^n \Rightarrow C$, where $C$ is as defined in Equation (\ref{BCP_xi}).

\subsection{Proof of Lemma \ref{lemma:c_x}}\label{appendix: proof_lemma_c_x}
The cost function $c(x)$ defined in (\ref{c_x}) can be written into matrix form:
\begin{align}
c(x)  &  =\min\left\{  \zeta^{\prime}H\zeta:\tilde{M}\zeta=x\right\}  ,\ x \in \Theta,\label{c_x_matrix}
\end{align}
where $\tilde{M}$ is as defined in (\ref{c_x_direction}). Then we use Lagrangian method with multiplier $\bar{\lambda}$:
\begin{align}
    \mathcal{L}(\zeta,\bar{\lambda}) = \zeta'H\zeta - 2\bar{\lambda}'(\tilde{M}\zeta - x),
\end{align}
and have first-order conditions
\begin{align}
    &\nabla_{\zeta} \mathcal{L}(\zeta,\bar{\lambda}) = 2H\zeta - 2\tilde{M}'\bar{\lambda} =0,\label{FOC_1}\\
    &\nabla_{\bar{\lambda}} \mathcal{L}(\zeta,\bar{\lambda}) = -2(\tilde{M}\zeta - x) =  0. \label{FOC_2}
\end{align}
Solving (\ref{FOC_1}), we have 
\begin{align}
    \zeta = H^{-1}\tilde{M}'\bar{\lambda},\label{zeta_lambda}
\end{align}
and substituting (\ref{zeta_lambda}) into (\ref{FOC_2}) leads to the linear system
\begin{align}
    (\tilde{M}H^{-1} \tilde{M}')\bar{\lambda} = x. \label{linear_system}
\end{align}
Assumption \ref{assumption: profit_function} and Equation (\ref{Hessian}) imply that matrix $H$ is positive definite, thus matrix $\tilde{M}H^{-1} \tilde{M}'$ is positive semidefinite but not necessarily positive definite (e.g., when $p>K$). Thus, we solve Equation (\ref{linear_system}) by
\begin{align}
    \bar{\lambda} = (\tilde{M}H^{-1} \tilde{M}')^{\dagger} x,\label{lambda}
\end{align}
where $\dagger$ denotes pseudo-inverse. When matrix $\tilde{M}H^{-1} \tilde{M}'$ is full-rank, we can replace the pseudo-inverse by an inverse. Combining (\ref{zeta_lambda}) and (\ref{lambda}) we have the expression of $\zeta^*(x)$ and then the expression of $c(x)$ in Lemma \ref{lemma:c_x}.

\subsection{Proof of Proposition \ref{proposition: equivalent_BCP_EWF}}\label{appendix: proof_equivalent_BCP_EWF}

The proof follows from Proposition EC.1 of \cite{Ata_Barjesteh_2023}, Proposition 3 of \cite{Rubino_Ata_2009}, and Propositions 3--4 and Theorem 2 of \cite{Harrison_Van_Mieghem_1997}. 

First, suppose that $(U,Y,\zeta)$ is an admissible control for Brownian control problem (\ref{BCP}) with state process $Z$, then we define workload process $W(t) = MZ(t)$ for $t\geq 0$, drift control process $\theta(t) = \tilde{M}\zeta(t)$ for $t\geq 0$, and idling process $L$ by Equation (\ref{BCP_I_i}). $(L,U,\theta)$ is an admissible policy for the workload problem (\ref{EWF}) with state process $W$, and its cost is less than or equal to that of the policy $(U,Y,\zeta)$ for the Brownian control problem (\ref{BCP}).

Next, suppose that $(L,U,\theta)$ is an admissible control for the workload problem (\ref{EWF}) with state process $W$. By the definition of the workload state space $\mathcal{W}$ (see Equation (\ref{W_state_space})), for each $W(t)\in \mathcal{W}$ there exists at least one scaled queue-length vector $Z(t)\in \mathcal{Z}$ such that $MZ(t) = W(t)$. Also let drift control $\zeta(t)=H^{-1}\tilde{M}^{\prime}(\tilde{M}H^{-1}\tilde{M}^{\prime})^{\dagger}\theta(t)$ for $t\geq 0$, and the centered and scaled allocation process $Y$ given by Equations (\ref{BCP_Z_k_1})--(\ref{BCP_Z_k_l}). $(U,Y,\zeta)$ is an admissible control for Brownian control problem (\ref{BCP}); and two policies have the same cost.

\subsection{Proof of Proposition \ref{proposition: equivalent_SDE}}\label{Appendix: proof: equivalent_SDE}

Applying It\^{o}'s formula to $e^{-\tilde{r} t} \tilde{V}(\tilde{W}(t))$, we have 
\begin{align}
    e^{-\tilde{r} T} \tilde{V} (\tilde{W}(T)) - \tilde{V} (\tilde{W}(0)) & = \int_0^T e^{-\tilde{r} t}\big((\mathcal{L} - \tilde{r}) \tilde{V} (\tilde{W}(t)) + \nabla \tilde{V} (\tilde{W}(t)) \cdot  \tilde{\theta}  + \tilde{G}^\top \nabla \tilde{V} (\tilde{W}(t)) \cdot \tilde{\psi}\big)dt \nonumber \\
    &+ \int_0^T e^{-\tilde{r} t}\nabla \tilde{V} (\tilde{W}(t)) \cdot dB(t) + \int_0^T e^{-\tilde{r} t} \mathcal{D} \tilde{V} (\tilde{W}(t)) \cdot d\tilde{\Upsilon}(t).\label{proof_equivalent_SDE_Ito}
\end{align}
Using boundary condition (\ref{HJB_BC}) and complementarity condition (\ref{backup_face_assign}), we have
\begin{align}
    \int_0^T e^{-\tilde{r} t} \mathcal{D} \tilde{V} (\tilde{W}(t)) \cdot d\tilde{\Upsilon}(t)  =  \int_0^T e^{-\tilde{r} t} (-\hat{\alpha}) \cdot d\tilde{\Upsilon}(t).
    \label{proof_equivalent_SDE_1}
\end{align}
Using HJB equation (\ref{HJB}), we have
\begin{align}
    &\int_0^T e^{-\tilde{r} t}\big((\mathcal{L} - \tilde{r}) \tilde{V} (\tilde{W}(t)) +  \nabla \tilde{V} (\tilde{W}(t)) \cdot \tilde{\theta} + \tilde{G}^\top \nabla\tilde{V} (\tilde{W}(t)) \cdot \tilde{\psi}\big)dt \nonumber \\
    &= -\int_0^T e^{-\tilde{r} t} F(\nabla \tilde{V}(\tilde{W}(t)))dt,\label{proof_equivalent_SDE_2}
\end{align}
where function $F$ is defined in (\ref{F_function}). Plugging Equations (\ref{proof_equivalent_SDE_1})--(\ref{proof_equivalent_SDE_2}) into (\ref{proof_equivalent_SDE_Ito}), we arrive at the identity (\ref{reference_SDE}).

\subsection{Derivation of Dynamic Pricing Policy (\ref{p_n_Taylor}) in Brownian Control Problem\label{appendix: derivation_policy_pricing}}

Recall the assumption (\ref{lambda_assumption}) on arrival rate $\lambda^n$ that
\begin{align}
    \lambda^{n}(t)=n\lambda^{\ast}+\sqrt{n}\zeta(t),\  t\geq 0.
\end{align}
Given heavy-traffic assumption (\ref{assumption_Lambda_n_mu_n}), we have
\begin{align}
    p^n(t) = (\Lambda^n)^{-1}\left(\lambda^n(t)\right) = \Lambda^{-1}\left(\lambda^* + \frac{\zeta(t)}{\sqrt{n}} \right).
\end{align}
Then its first-order Taylor expansion gives
\begin{align}
    p^n(t) = \Lambda^{-1}(\lambda^*) + \frac{\nabla \Lambda^{-1}(\lambda^*)\zeta(t)}{\sqrt{n}} + o\left(\frac{1}{\sqrt{n}}\right).\label{p_n_Taylor_appendix}
\end{align}

\section{Additional Details of Computational Method}

\subsection{Existence and Uniqueness of SRBMs in our Setting for the Reference Process\label{appendix: SRBM}}

To facilitate the analysis, we introduce the auxiliary definitions as follows. For a subset $\mathcal{K}\subset \mathcal{J}$, we define the intersection of faces $\{F_j,\ j \in \mathcal{K}\}$ as 
\begin{align}
    F_{\mathcal{K}} = \{w: \ w \in F_j \text{ for all } j \in \mathcal{K}\}, \ \mathcal{K}\subset \mathcal{J}.\label{F_K}
\end{align}
A subset $\mathcal{K}\subset \mathcal{J}$ is maximal if $\mathcal{K}\neq \emptyset$, $F_{\mathcal{K}}\neq \emptyset$, and $F_{\mathcal{K}} \neq F_{\tilde{\mathcal{K}}}$ for any $\tilde{\mathcal{K}}\supsetneq \mathcal{K}$. 

To verify that one can choose the pushing directions of the backup singular control so that the reference process is well defined for our problem, we utilize the theory of existence and uniqueness for semimartingale reflecting Brownian motions (SRBMs) in convex polyhedrons developed by \cite{Dai_Williams_1996_existence}; also see \cite{Reiman_Williams_1988}. Assumption 1.1 of \cite{Dai_Williams_1996_existence} provides sufficient conditions for the existence and uniqueness of SRBMs in convex polyhedrons, which subsumes our setting for the reference process. Under that assumption, Theorem 1.3 of \cite{Dai_Williams_1996_existence} establishes the existence and uniqueness of the SRBM. We state their Assumption 1.1 below using our notation for completeness.

\begin{assumption}[Assumption 1.1 in \cite{Dai_Williams_1996_existence}]\label{assumption:dai_williams} For each maximal $\mathcal{K} \subset \mathcal{J}$,
\begin{itemize}
    \item[(i)] there is a positive linear combination $\hat{g} = \sum_{j\in \mathcal{K}} u_j\hat{g}_j$ ($u_j>0 \ \forall j \in \mathcal{K}$) of the $\{\hat{g}_j,\  j \in \mathcal{K}\}$ such that $\sigma_j\cdot \hat{g}   > 0$ for all $j \in \mathcal{K}$; 
    \item[(ii)] there is a positive linear combination $\sigma = \sum_{j\in \mathcal{K}} y_j\sigma_j$ ($y_j>0 \ \forall j \in \mathcal{K}$) of the $\{\sigma_j, \ j \in \mathcal{K}\}$ such that $\sigma\cdot \hat{g}_j > 0$ for all $j \in \mathcal{K}$. 
\end{itemize}
\end{assumption}

Proposition \ref{proposition: existence_singular_control} below establishes that one can choose the reflection matrix $\hat{G}$ for the backup singular control problem so that Assumption \ref{assumption:dai_williams} holds. Thus, by Theorem 1.3 of \cite{Dai_Williams_1996_existence}, the reference process is well defined. As a preparation for Proposition \ref{proposition: existence_singular_control}, we first prove Lemma \ref{lemma:simplified_dai_williams_assumption} and Lemma \ref{lemma:pushing_direction_span} as follows.

\begin{lemma}\label{lemma:simplified_dai_williams_assumption}
For each maximal $\mathcal{K} \subset \mathcal{J}$, there exists a positive linear combination $g = \sum_{j\in \mathcal{K}} u_j\sigma_j$ ($u_j>0, \ \forall j \in \mathcal{K}$) of the $\{\sigma_j,\ j \in \mathcal{K}\}$ such that $\sigma_j \cdot g > 0$ for all $j\in \mathcal{K}$.
\end{lemma}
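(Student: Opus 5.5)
The plan is to find a single vector $g$ in the cone generated by $\{\sigma_j,\ j\in\mathcal{K}\}$ that makes a strictly positive inner product with each $\sigma_j$. The key input is that, for a maximal $\mathcal{K}$, the face intersection $F_{\mathcal{K}}$ is nonempty and $\mathcal{W}$ has nonempty interior. Together these give a direction that points strictly into every half-space $\{\sigma_j\cdot w\ge h_j\}$, $j\in\mathcal{K}$, at once.

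\textbf{Step 1: a strictly interior direction.} Fix a maximal $\mathcal{K}$ and pick $w_0\in F_{\mathcal{K}}$. The bounded polyhedron $\mathcal{W}$ is full-dimensional, which we verify from (\ref{W_state_space}): $\mathcal{W}$ contains a small neighborhood of a point $M z$ with $z$ interior to $\mathcal{Z}$, since $M$ maps onto $\mathbb{R}^p$ in our setting. Hence there is an interior point $w_1$ with $\sigma_j\cdot w_1>h_j$ for all $j\in\mathcal{J}$. Set $d=w_1-w_0$. Then $\sigma_j\cdot d>0$ for every $j\in\mathcal{K}$, because $\sigma_j\cdot w_0=h_j$.

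\textbf{Step 2: reduce to a Gram-matrix statement.} We need $u\in\mathbb{R}^{|\mathcal{K}|}_{>0}$ with $Su>0$ componentwise, where $S=(\sigma_i\cdot\sigma_j)_{i,j\in\mathcal{K}}$ is the Gram matrix. By Gordan's alternative, such a $u$ fails to exist only if some nonzero $y\ge 0$ satisfies $y^\top S\le 0$ and $y^\top S u$ cannot be made positive. A cleaner route is to argue directly. Suppose no positive combination works. A separation argument on the cone $C=\{\sum u_j\sigma_j: u\ge 0\}$ then produces nonzero $y\ge0$ with $v:=\sum_j y_j\sigma_j$ satisfying $v\cdot\sigma_j\le 0$ for all $j\in\mathcal{K}$. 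Consequently $|v|^2=\sum_j y_j\, v\cdot\sigma_j\le 0$, so $v=0$. But $v=0$ gives $v\cdot d=\sum_j y_j\,\sigma_j\cdot d>0$ by Step 1, which is a contradiction.

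\textbf{Step 3: upgrade to strict positivity of the coefficients.} The separation argument in Step 2 must deliver $u_j>0$ for all $j$, not only $u\ge0$. I would handle this with the duality (Ville/Gordan) form applied to the matrix $S$ restricted to strictly positive $u$. Consider the open cone $\{u>0\}$ and its image under $S$. If this image misses the open positive orthant, the two convex sets can be separated by some $y\ge0$, $y\neq0$, with $y^\top S u\le 0$ for all $u>0$. This forces $Sy\le 0$, since $S$ is symmetric. Then $v=\sum y_j\sigma_j$ satisfies $v\cdot\sigma_j\le0$, and the argument in Step 2 gives $v=0$, contradicting $v\cdot d>0$.

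The main obstacle is Step 3, namely carrying out the separation carefully with the symmetry of $S$ and the strict inequalities. Step 1, which relies on the full-dimensionality of $\mathcal{W}$, is the only place where problem structure enters.
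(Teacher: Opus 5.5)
Your argument is correct, but it takes a different route from the paper's. The paper works in the ambient space. With $C=\operatorname{cone}(\sigma_j,\ j\in\mathcal{K})$, $S$ its span and $C^*$ the dual cone of $C$ inside $S$, it notes that any point of $\operatorname{int}(C\cap C^*)$ will do. It then shows this set is nonempty by separating $C$ from $C^*$ with some $a\in S\setminus\{0\}$; such an $a$ lies in $C^*$ and satisfies $a\cdot a\le 0$, a contradiction.

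You work in coefficient space instead. Your Step 3 is a Gordan-type alternative for the system $u>0$, $Su>0$, which produces $y\ge 0$, $y\neq 0$ with $Sy\le 0$. The same self-inner-product trick then gives $v=\sum_j y_j\sigma_j=0$, and the inward direction $d$ from Step 1 yields the contradiction $0=v\cdot d>0$.

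Your route shows exactly where the geometry of $\mathcal{W}$ is needed. The paper's argument never uses $F_{\mathcal{K}}\neq\emptyset$ or any property of $\mathcal{W}$. Yet its separation step tacitly requires $C^*$ to have nonempty interior in $S$, that is, $C$ must be pointed. For $\sigma_2=-\sigma_1$ one has $C=S$ and $C^*=\{0\}$, no separating $a$ exists, and the lemma's conclusion is false. Your vector $d$ satisfies $x\cdot d>0$ for all $x\in C\setminus\{0\}$, which is precisely this pointedness. With it, the paper's shorter cone/dual-cone argument also becomes complete.

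Two things to tighten in a write-up.
\begin{itemize}
\item The Gordan sentence opening Step 2 is muddled, and the separation claim there is only proved in Step 3. Merge the two steps: Gordan's theorem applied to $S$ stacked on top of the identity gives exactly $y\ge0$, $y\neq0$, $Sy\le0$.
\item In Step 1, justify that $M$ is onto. Column $(k,l)$ of $M$ minus column $(k,l+1)$ equals $m_{k,l}e_{s(k,l)}$, and the column of the last stage $(k,n_k)$ is $m_{k,n_k}e_{s(k,n_k)}$. The heavy-traffic assumption forces every $\mathcal{C}_i$ to be nonempty, so these vectors span $\mathbb{R}^p$. Together with $\beta_k>0$, which gives $\mathcal{Z}$ nonempty interior, this yields full-dimensionality of $\mathcal{W}$.
\end{itemize}
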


\begin{proof}[Proof of Lemma \ref{lemma:simplified_dai_williams_assumption}]\label{proof:simplified_dai_williams_assumption}
For each maximal $\mathcal{K}\subset \mathcal{J}$ we denote a cone and linear space generated by unit normal vector $\{\sigma_j,\ j\in \mathcal{K}\}$ respectively as follows:
\begin{align*}
    C = \operatorname{cone}(\sigma_j, \ j \in \mathcal{K})\quad \text{and} \quad S = \operatorname{span}(\sigma_j, \ j \in \mathcal{K}).
\end{align*}
And denote the dual cone of $C$ restricted to space $S$ as
\begin{align*}
    C^* = \{x \in S: \ x\cdot y \geq 0 \text{ for } \forall y \in C\}.
\end{align*}
Based on the notations above, now $g \in \operatorname{int} (C)$ implies that $g$ can be represented as a positive linear combination $g = \sum_{j\in \mathcal{K}} u_j\sigma_j$ ($u_j>0, \ \forall j \in \mathcal{K}$), and $g \in \operatorname{int} (C^*)$ implies that $g\cdot \sigma_j > 0$ for all $j \in \mathcal{K}$. Then any $g \in \operatorname{int} (C \cap C^*)$ satisfies the conditions.
    
We then prove that the set $\operatorname{int} (C \cap C^*)$ is not empty. By contradiction, if $\operatorname{int} (C \cap C^*)$ is an empty set, then since $C$ and $C^*$ are both convex sets, there exists a hyperplane $\{a^\top x = 0\}$ with coefficient $a \in S \backslash \{0\}$, such that 
\begin{align}
    &a^\top x \geq 0 \quad \text{for} \quad \forall x \in C,\label{hyperplane_C}\\
    &a^\top x \leq 0  \quad \text{for} \quad \forall x \in C^*.\label{hyperplane_C_ast}
\end{align} 
Equation (\ref{hyperplane_C}) implies that $a \in C^*$, then let $x = a$ in Equation (\ref{hyperplane_C_ast}), we have $a^\top a \leq 0$, which contradicts with non-zero property of coefficient vector $a$.\Halmos
\end{proof}

\begin{lemma}\label{lemma:pushing_direction_span}
For any vector $g \in \mathbb{R}^p$, there exists $a \in \mathbb{R}^{p+K}_+$ such that $g = \tilde{G}a$.
\end{lemma}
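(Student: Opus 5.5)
The lemma says that the columns of $\tilde{G} = (I_p \; -\tilde{M})$ generate all of $\mathbb{R}^p$ as a convex cone. I will prove it by construction. I first show that the outsourcing columns supply a direction that is strictly negative in every coordinate. Then any $g$ can be written as a nonnegative multiple of that direction plus a nonnegative combination of the identity columns.

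\emph{Step 1: every column sum of $\tilde{M}$ is positive.} I need $\tilde{M}\mathbf{1}_K$ to be strictly positive componentwise, i.e. every server does work on at least one product type. By definition (\ref{c_x_direction}), $(\tilde{M}\mathbf{1}_K)_i = \sum_{(k,l)\in\mathcal{C}_i} m_{k,l}$, and every $m_{k,l}>0$. So it suffices that $\mathcal{C}_i\neq\emptyset$ for each server $i$. This follows from the heavy-traffic condition in Assumption \ref{assumption: heavy_traffic}: $\sum_{(k,l)\in\mathcal{C}_i}\lambda_k^*/\mu_{k,l}=1$, and an empty sum would equal $0$, not $1$. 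Hence $v:=\tilde{M}\mathbf{1}_K$ satisfies $v_i>0$ for all $i$.

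\emph{Step 2: construct the coefficients.} Given $g\in\mathbb{R}^p$, choose
\[
t=\max\Big\{0,\ \max_i \frac{-g_i}{v_i}\Big\}\ \ge 0 .
\]
Then $g+tv\ge 0$ componentwise. Set
\[
a=\begin{pmatrix} g+tv\\ t\mathbf{1}_K\end{pmatrix}\in\mathbb{R}^{p+K}_+ .
\]
This gives $\tilde{G}a=(g+tv)-\tilde{M}(t\mathbf{1}_K)=g+tv-tv=g$, as required.

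The only nontrivial point is Step 1, the strict positivity of $\tilde{M}\mathbf{1}_K$, which relies on the implicit fact that every server has a nonempty set of classes. I will invoke the heavy-traffic identity for that. Everything else is a direct computation.
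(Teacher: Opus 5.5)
Your proof is correct and is essentially the paper's argument. The paper combines a single outsourcing column $\tilde{g}_{p+k}$ with $\tilde{M}_{ik}>0$ and identity columns to produce each $-e_i$, then writes $g$ using $\pm e_i$; you instead sum all outsourcing columns into the strictly negative direction $-\tilde{M}\mathbf{1}_K$ and absorb the surplus with identity columns. Both rest on every server having $\mathcal{C}_i\neq\emptyset$, which you justify via the heavy-traffic identity and the paper simply takes from the definition of $\tilde{M}$. One cosmetic slip: $\tilde{M}\mathbf{1}_K$ is the vector of row sums of $\tilde{M}$, not column sums, so the heading of your Step 1 should say ``row sum''.
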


\begin{proof}[Proof of Lemma \ref{lemma:pushing_direction_span}]\label{proof:pushing_direction_span}
Recall that $\tilde{G} = (
I_p \ -\tilde{M})$. Representing columns of $\tilde{G}$ as $\tilde{g}_1,\dots,\tilde{g}_{p+K}$, we have $\tilde{g}_i = e_i$ for $i = 1,\dots,p$ (i.e., $i$th entry of $\tilde{g}_{i}$ is $1$ and others are zero). The proof resorts to showing that $\{-e_i,i=1,\dots,p\}$ (i.e., $i$th entry is $-1$ and others are zero) can be represented by nonnegative linear combinations of $\tilde{g}_1,\dots,\tilde{g}_{p+K}$, after which we can use $\{e_i,-e_i,i = 1,\dots,p\}$ to construct any vector $g \in \mathbb{R}^p$. 

To construct $-e_i$ for $i = 1,\dots,p$, we proceed as follows. For any $i = 1,\dots, p$, according to definition (\ref{c_x_direction}) of $\tilde{M}$, there exists a $k$ such that $\tilde{M}_{ik}\neq 0$. Then we construct a vector as
\begin{align}
    \tilde{g}_{-i} = (\tilde{g}_{p+k} + \sum_{j: 1\leq j \leq p, j\neq i} \tilde{g}_j \tilde{M}_{jk} ) / \tilde{M}_{ik}. \label{g_minus_i}
\end{align}
It's straightforward to check that $\tilde{g}_{-i} = -e_i$.\Halmos

\end{proof}

\begin{proposition}\label{proposition: existence_singular_control}
    There exists a reflection matrix $\hat{G}$ satisfying Assumption \ref{assumption:dai_williams} for our problem.
\end{proposition}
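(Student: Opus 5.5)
The plan is to choose the simplest possible reflection matrix: let each face push along its own inward normal, i.e., set $\hat{g}_j = \sigma_j$ for every $j \in \mathcal{J}$. Two things then have to be checked. First, this choice must be admissible in the sense of Section \ref{sec: computational_method_approximation}, meaning that each $\hat{g}_j$ lies in the convex cone generated by the columns of $\tilde{G}$. Second, it must satisfy both parts of Assumption \ref{assumption:dai_williams}.

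Admissibility follows immediately from Lemma \ref{lemma:pushing_direction_span}. Applying that lemma with $g=\sigma_j$ gives $a_j\in\mathbb{R}_+^{p+K}$ such that $\sigma_j=\tilde{G}a_j$. With these $a_j$ in hand, the backup costs $\hat{\alpha}_j = a_j\cdot\tilde{\alpha}$ are well defined.

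Next, fix a maximal $\mathcal{K}\subset\mathcal{J}$. Because $\hat{g}_j=\sigma_j$, parts (i) and (ii) of Assumption \ref{assumption:dai_williams} become the same statement. Each asks for strictly positive weights $u_j$, $j\in\mathcal{K}$, such that the vector $g=\sum_{j\in\mathcal{K}}u_j\sigma_j$ satisfies $\sigma_j\cdot g>0$ for all $j\in\mathcal{K}$. This is exactly the conclusion of Lemma \ref{lemma:simplified_dai_williams_assumption}. So for part (i) we take $\hat{g}=g$ with weights $u_j$. For part (ii) we take $y_j=u_j$ and $\sigma=g$, which gives $\sigma\cdot\hat{g}_j=g\cdot\sigma_j>0$.

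With both parts verified for every maximal $\mathcal{K}$, the matrix $\hat{G}=(\sigma_1,\dots,\sigma_J)$ satisfies Assumption \ref{assumption:dai_williams}, and this proves the proposition. Once the choice $\hat{G}$ is made, every step is a direct appeal to the two preceding lemmas, so no substantial obstacle remains.

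The one point worth a remark is that the proof relies on the correctness of Lemma \ref{lemma:simplified_dai_williams_assumption}, in particular its separation argument on the relative interior within $S$. One might worry about redundant normals in $\mathcal{K}$, for example when $\mathcal{K}$ is degenerate at a vertex shared by more than $p$ faces. The lemma's statement already covers that case, so I will simply invoke it.
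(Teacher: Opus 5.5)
Your proposal is correct and follows the paper's proof step for step: take $\hat{g}_j=\sigma_j$, get membership in the cone generated by the columns of $\tilde{G}$ from Lemma \ref{lemma:pushing_direction_span}, and note that with this choice both parts of Assumption \ref{assumption:dai_williams} reduce to Lemma \ref{lemma:simplified_dai_williams_assumption}. Your worry about degenerate vertices is unfounded. The separation step in that lemma only tacitly needs the cone generated by $\{\sigma_j, j\in\mathcal{K}\}$ to be pointed, and this follows from $\mathcal{W}$ having nonempty interior: for an interior point $w_0$ and any $w\in F_{\mathcal{K}}$, we have $\sigma_j\cdot(w_0-w)>0$ for all $j\in\mathcal{K}$.
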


\begin{proof}[Proof of Proposition \ref{proposition: existence_singular_control}]\label{proof:existence_singular_control}

We construct a reflection matrix $\hat{G}$ by setting the pushing direction for each face as
\begin{align}
    \hat{g}_j = \sigma_j, \ j \in \mathcal{J},\label{construct_direction_sigma}
\end{align}
where $\sigma_j$ is the unit vector normal to face $F_j$ of state space. That is, each pushing direction is orthogonal to its associated face.

Then we prove that such choice of reflection matrix $\hat{G}$ satisfies Assumption \ref{assumption:dai_williams}, and each pushing direction $\hat{g}_j$ can be represented as nonnegative linear combinations of columns of pushing direction matrix $\tilde{G}$ (that is, the pushing direction matrix of singular control $\mathcal{U}$) in our problem. The first half is proved by noting that Assumption \ref{assumption:dai_williams} under choice (\ref{construct_direction_sigma}) can be simplified to Lemma \ref{lemma:simplified_dai_williams_assumption}. And the second half is implied by Lemma \ref{lemma:pushing_direction_span}.\Halmos
\end{proof}

Recall from Section \ref{sec: computational_method_policy}, the workload problem under the proposed policy has two additional features. First, its drift rate is state dependent. This model feature is not covered by the theory of \cite{Dai_Williams_1996_existence}. However, one can combine that theory with Girsanov's theorem to handle the case of state-dependent drifts under certain conditions. The second feature is that under the proposed policy more than one pushing direction might be used on each face $F_j$ of the state space. Such policies have been observed in other papers as the optimal singular control too, e.g., \cite{Ata_Barjesteh_Kumar_2020}. However, we are not aware of existing theory that handles the question of existence and uniqueness of the SRBM is this context. Thus, we take a formal, i.e., nonrigorous, approach in Sections \ref{sec: computational_method_SDE}--\ref{sec: computational_method_policy}, leaving the rigorous analysis of existence and uniqueness questions for future research.

\subsection{Simulation under the Reference Process\label{appendix: computational_description}}

\begin{algorithm}[t]
\begin{small}
\caption{Euler Discretization Scheme.}
\label{alg:euler_discretization}
\KwIn{Drift vectors $\tilde{\theta}, \tilde{\psi}$; drift vector $\gamma$ and covariance matrix $\Gamma$ of Brownian motion $\xi$; reflection matrix $\hat{G}$ for backup control $\Upsilon$; state space $\mathcal{W}$; time horizon $T$; step size $h$ with $N := T/h \in \mathbb{N}$; initial point $\tilde{W}(0) = w$.}
\KwOut{Discretized workload $\tilde{W}(t)$, pushing increments $\Delta\tilde{\Upsilon}(t)$, and Brownian increments $\Delta B(t)$ at $t = 0, h, \ldots, Nh$.}
\Fn{\textsc{Discretize($T$, $h$, $w$)}}{
    Construct the time partition $0 = t_0 < t_1 < \cdots < t_N = T$ with $\Delta t_n = t_{n+1}-t_n =  h$ for $n = 0,\dots,N-1$\;
    Generate i.i.d. Gaussian increments $\Delta B(kh) \sim \mathcal{N}(0, h\Gamma)$ for $k = 0, \ldots, N-1$\;
    \For{$k \gets 0$ \KwTo $N-1$}{
        $x \gets \tilde{W}(kh) + \gamma h + \Delta B(kh) + \tilde{\theta} h + \tilde{G}\tilde{\psi}h$\;
        $\tilde{W}((k+1)h), \Delta \tilde{\Upsilon}(kh) \gets $ \textsc{Skorokhod}($x$)\;
    }
    \KwRet $\{\tilde{W}(kh)\}_{k=1}^{N}$, $\{\Delta \tilde{\Upsilon}(kh)\}_{k=0}^{N-1}$, $\{\Delta B(kh)\}_{k=0}^{N-1}$\;
}
\end{small}
\end{algorithm}

Algorithm \ref{alg:euler_discretization} describes the Euler discretization scheme in the first step of our computational method. The reference policy $(\tilde{\theta},\tilde{\psi})$ specifies the drift parameters of the reference process $\tilde{W}$. Additionally, when $\tilde{W}$ reaches the boundary $\partial \mathcal{W}$ of the state space, it is ``pushed back'' to the interior of the state space. This boundary behavior is formalized using the Skorokhod map that, in turn, can be viewed as a solution of a certain linear complementarity problem. To be specific, the function \textsc{Skorokhod}($x$) (see Algorithm \ref{alg:skorokhod}) in Algorithm \ref{alg:euler_discretization} solves the Skorokhod problem given the bounded convex polyhedron state space $\mathcal{W}$, current workload $x$, and the reflection matrix $\hat{G}$. Its output is the updated workload vector $\tilde{W}((k+1)h)$ and the pushing process increment $\Delta \tilde{\Upsilon}(kh)$. 

In principle any algorithm for solving linear complementarity problem can be used in Algorithm \ref{alg:skorokhod}, however, we need it to be efficient as it is invoked repeatedly in Algorithm \ref{alg:euler_discretization} for simulating the reference process. \cite{Ata_Harrison_Si_2024_drift} provides an efficient method for their setting, i.e., the state space is the positive orthant and the reflection matrix is a Minkowski matrix. Because our reflection matrix $\hat{G}$ for the backup process need not be a Minkowski matrix in general, one needs to solve the linear complementarity problem from first principles.
For our test problems (see Section \ref{sec: test_problems}), it is worth mentioning that using the problem-specific special structure, we derive simple yet efficient implementations of the Skorokhod map; see Appendix \ref{appendix: implementation_reflection_mapping}.

\begin{algorithm}[t]
\begin{small}
\caption{Solve the Skorokhod Problem (Linear Complementarity Problem).}
\label{alg:skorokhod}
\KwIn{The state space $\mathcal{W} = \{w\in \mathbb{R}^p: \sigma_j \cdot w \geq h_j \text{ for all } j \in \mathcal{J}\}$, current workload $x$, reflection matrix $\hat{G} = (\hat{g}_1,\dots,\hat{g}_J)$.}
\KwOut{Updated workload vector $y$, pushing process increment $u$.}
\Fn{\textsc{Skorokhod($x$)}}{
    \If{$x \in \mathcal{W}$}{
    \KwRet $x$, $0_J$\;
    }
    \Else{
    Find $\{u_j\}_{j=1}^J$ and $y$ such that
    $y = x + \sum_{j=1}^J u_j \hat{g}_j$, $\sigma_j \cdot y \geq h_j$, $u_j \geq 0$, $u_j(\sigma_j \cdot y - h_j) = 0$ for all $j$\;
        $u \gets (u_1, \ldots, u_J)'$\;
        \KwRet $y$, $u$\;
    }
}
\end{small}
\end{algorithm}

\section{Detailed Definition of Benchmark Policies\label{sec: benchmark}}

\subsection{Outsourcing and Discretionary Idling Policies\label{sec: benchmark_outsourcing}}

In this subsection, we introduce two alternative benchmark policies for outsourcing and discretionary idling. We refer to these alternatives as the single-pushing-control-per-face policy and the threshold-based policy. The former assigns a single pushing control to each face of the state space of the workload process but otherwise mimics our proposed policy. Thus, unlike our proposed policy, it does not partition each face into subsets and assign a different pushing direction to each subset. The threshold-based policy outsources an arriving job of a product whenever the total number of jobs of that product in the system exceeds a specified threshold.

\subsubsection{Single Pushing Control per Face Policy.} We start by defining the singular control policy for the workload problem in single-pushing-control-per-face policy (single-pushing policy hereafter). Recall that the limiting state space $\mathcal{W}$ has a set of faces $F_j$ ($j \in \mathcal{J}$); see Equation (\ref{W_face}). We assign a single singular control to each face $F_j$ ($j \in \mathcal{J}$) of the state space by index
\begin{align}
    i^{\text{SP}}(j) \in \operatorname*{argmin}_{i:\ \tilde{g}_i \cdot \sigma_j>0}\ \frac{\tilde{\alpha}_i}{\tilde{g}_i \cdot \sigma_j}.\label{benchmark_outsourcing_single_push_index}
\end{align}
Here, $\tilde{g}_i$ (resp. $\tilde{\alpha}_i$) is the pushing direction (resp. cost rate) of singular control $\mathcal{U}_i$, and $\sigma_j$ is the inward-pointing unit normal to the face $F_j$. The policy is to apply singular control $\mathcal{U}_{i^{\text{SP}}(j)}$ at any interior point $w$ of face $F_j$. It greedily minimizes the instantaneous pushing cost in the workload problem by using the cheapest pushing control at the boundary.

It is noteworthy that Equation (\ref{benchmark_outsourcing_single_push_index}) and our proposed singular control (\ref{learned_boundary_control}) both provide a singular-control solution to limiting workload problem (at interior of faces), but differ in their objectives for selecting singular control. To be specific, Equation (\ref{benchmark_outsourcing_single_push_index}) is solely based on instantaneous cost $\tilde{\alpha}_i$, whereas our proposed one (\ref{learned_boundary_control}) further include a gradient term $G_{w_2}(w)$ characterizing long-term effect. In our computational results (see Section \ref{sec: simulation}), we identify long-term effect in our proposed policy by comparing it with single-pushing policy. Furthermore, different objectives in selecting index lead to different structures of limiting solution in two policies. In Equation (\ref{benchmark_outsourcing_single_push_index}) only one singular control is assigned to one face, while in our proposed one (\ref{learned_boundary_control}) singular control is state-dependent and can vary among different states on one face.

Based on Equation (\ref{benchmark_outsourcing_single_push_index}), we form an outsourcing and discretionary idling policy in the original system using the same approach used for our proposed one (see Section \ref{sec: policy_outsourcing}). To be specific, we apply the policy only when the workload process $W^n(t)$ leaves the state space $\mathcal{W}^n_\rho$, and distinguish among three cases, namely, the face-interior band subsets, the corner band subsets, and the region beyond the band. We define the policy for three cases in what follows, where the notation introduced in Section \ref{sec: policy_outsourcing} is reused.

\textit{Face-interior band subsets.} In this case, exactly one half-space constraint is violated, that is, \(\mathcal{K}_{\mathrm{out}}(W^n(t))\) is a singleton. We suppose that $\mathcal{K}_{\mathrm{out}}(W^n(t))=\{j\}$,
and recall that \(\sigma_j\) is the inward-pointing unit normal to the face \(F_j\). We then choose the index \(i^{\text{SP}}\) of the pushing direction as follows:
\begin{align}
    i^{\text{SP}} \in \operatorname*{argmin}_{i\in S(t)\cup \mathcal{A}:\  \tilde{g}_i \cdot \sigma_j > 0} \frac{ \tilde{\alpha}_i}{\tilde{g}_i \cdot \sigma_j}.\label{SP_interior_index}
\end{align}
If $i^{\text{SP}}\in S(t)$, the system manager idles server $i^{\text{SP}}$. If instead $i^{\text{SP}}\in\mathcal{A}$, she outsources the corresponding product type. Under the indexing conventions used in the definition of $\mathcal{A}$ and in (\ref{mathcal_U})--(\ref{workload_process_singular}), the set of outsourced product type is $i^{\text{SP}} -p$.

\textit{Corner band subsets and beyond the band.} In this case, we restrict attention to outsourcing controls only. For each violated constraint $j\in \mathcal{K}_{\mathrm{out}}(W^n(t))$, we choose the index $i^{\text{SP}}(j)$ of a pushing direction as follows:
\begin{align}
    i^{\text{SP}}(j) \in \operatorname*{argmin}_{i\in \mathcal{A}:\  \tilde{g}_i \cdot \sigma_j > 0} \frac{\tilde{\alpha}_i}{\tilde{g}_i \cdot \sigma_j},\label{SP_corner_outside_index}
\end{align}
and we apply the pushing controls $\bigl\{i^{\text{SP}}(j):j\in \mathcal{K}_{\mathrm{out}}(W^n(t))\bigr\}$
simultaneously, thereby outsourcing the corresponding product types. Under the indexing convention used to define $\mathcal{A}$, the set of outsourced product types is $\bigl\{i^{\text{SP}}(j)-p:j\in \mathcal{K}_{\mathrm{out}}(W^n(t))\bigr\}$.

Compared to our proposed outsourcing and discretionary idling policies (see Section \ref{sec: policy_outsourcing}), the single-pushing policy differ only in its objective of selecting the index (see Equations (\ref{SP_interior_index})--(\ref{SP_corner_outside_index})), which are solely based on instantaneous cost $\tilde{\alpha}_i$, and does not include gradient $\widetilde{G}(t)$. Similar to our proposed policy, its tuning parameters include bound scale $\rho$, band width $\epsilon$, and safety stock $s$.

\begin{remark}
    In general, one might regard mapping $i^{\text{SP}}(\cdot)$ as a tuning parameter beyond the specific choice given in Equation (\ref{benchmark_outsourcing_single_push_index}). Indeed, one can use any mapping $i^{\text{SP}}(\cdot):\mathcal{J}\to \{1,\dots,p+K\}$ that assigns one \textit{feasible} singular control $\tilde{g}_{i^{\text{SP}}(j)}$ to each face $F_j$ of state space, i.e., $\tilde{g}_{i^{\text{SP}}(j)} \cdot \sigma_j > 0$. However, the number of possible mapping $i^{\text{SP}}(\cdot)$ can grow exponentially with the number of faces, making a thorough search computationally intractable. Thus in our benchmark policy, we consider only the choice given in Equation (\ref{benchmark_outsourcing_single_push_index}).
\end{remark}

\subsubsection{Threshold-based Policy.}

Given queue length vector $Q^n(t)$ at time $t$, the threshold-based policy outsources new arrivals of type $k$ ($k = 1,\dots,K$) order if its total number in system exceeds its bound $q^n_k(\rho)$, i.e.,
\begin{align}
    \sum_{l=1}^{n_k} Q_{k,l}^n(t) \geq q^n_k(\rho).\label{threshold_based_outsourcing_policy}
\end{align}
Here, the queue-length bound $q^n_k(\rho)$ is parameterized by a tuning parameter $\rho$ to fit into delay-probability constraint setting. There is no discretionary idling in threshold-based policy. That is, server $i$ ($i = 1,\dots, p$) will idle only if all of its associated buffers are empty.

\subsection{Pricing Policies\label{sec: benchmark_pricing}} 

We introduce two alternative pricing policies. Both policies follow the dynamic pricing policy (\ref{p_n_Taylor}) prescribed by the Brownian control problem (\ref{BCP}), but specify different choices of drift-rate control policy $\{\zeta(t),\ t\geq 0\}$ adapted from literature. \textit{Linear pricing policy} adapts from a benchmark policy in \cite{Ata_Harrison_Si_2024_drift}, which assumes that drift-rate control depends linearly on the state vector. \textit{High-low pricing policy} adapts from $N$-price benchmark policy in \cite{Ata_Barjesteh_2023}.

\subsubsection{Linear Pricing Policy.} 

The linear pricing policy assumes a linear dependence of drift control $\zeta(t)$ in Equation (\ref{p_n_Taylor}) on queue lengths $Q^n(t)$ in original system. More specifically, its choice of $\zeta(t)$ in Equation (\ref{p_n_Taylor}) is
\begin{align}
\zeta^{\text{Lin}}_k(t) = \phi_k \cdot \left(\sum_{l=1}^{n_k} Q^n_{k,l}(t)\right) + \phi_{-k} \cdot \left(\sum_{k'\neq k} \sum_{l = 1}^{n_{k'}} Q^n_{k',l}(t)\right), \quad k = 1,\dots,K.\label{benchmark_pricing_linear_zeta}
\end{align}
Here, $\phi_k$ (resp. $\phi_{-k}$) is the tuning parameter representing the linear dependence of $k$th entry of drift control $\zeta^{\text{Lin}}(t)$ on total queue lengths in route $k$ (resp. all other routes). Substituting the choice (\ref{benchmark_pricing_linear_zeta}) for $\zeta(t)$ in (\ref{p_n_Taylor}) yields the following linear pricing policy: Given queue length $Q^n(t)$ at time $t$, choose the price vector
\begin{align}
    p^n(t)=\Lambda^{-1}(\lambda^*) + \frac{1}{\sqrt{n}}\nabla \Lambda^{-1}(\lambda^*)\zeta^{\text{Lin}}(t).\label{benchmark_pricing_linear}
\end{align}
Tuning parameters in linear pricing policy (\ref{benchmark_pricing_linear}) includes $\{(\phi_k,\phi_{-k}),\ k= 1,\dots,K\}$, denoted as $(\phi,\phi_{-})$ for brevity.

\subsubsection{High-low Pricing Policy.}

The high-low pricing policy sets different values for drift control $\zeta(t)$ in Equation (\ref{p_n_Taylor}), depending on the number of jobs in each route. More specifically, its choice of $\zeta(t)$ in Equation (\ref{p_n_Taylor}) is
\begin{align}
\zeta^{\text{HL}}_k(t) = \left\{\begin{array}{ll} -\zeta^L_k &\text{if} \ \sum_{l=1}^{n_k} Q^n_{k,l}(t) < \tilde{T}_k, \\  -\zeta^H_k &\text{otherwise,} \end{array}\right.\quad \text{for } k = 1,\dots,K.\label{benchmark_pricing_highlow_zeta}
\end{align}
Here, $\tilde{T}_k$, $\zeta_k^H \geq \zeta_k^L \geq 0 $ are tuning parameters. The parameter $\zeta_k^L$ (resp. $\zeta_k^H$) represents the magnitude of the pre-set $k$th entry of drift control vector $\zeta^{\text{HL}}(t)$ if total queue lengths of route $k$ is below (resp. above) certain threshold $\tilde{T}_k$. Substituting the choice (\ref{benchmark_pricing_highlow_zeta}) for $\zeta(t)$ in (\ref{p_n_Taylor}) yields the high-low pricing policy, defined as follows: Given queue length $Q^n(t)$ at time $t$, we choose the price vector 
\begin{align}
    p^n(t)=\Lambda^{-1}(\lambda^*) + \frac{1}{\sqrt{n}}\nabla \Lambda^{-1}(\lambda^*)\zeta^{\text{HL}}(t).\label{benchmark_pricing_highlow}
\end{align}
Tuning parameters in high-low pricing policy (\ref{benchmark_pricing_highlow}) includes $\{(\tilde{T}_k, \zeta_k^L, \zeta_k^H),\ k = 1,\dots,K\}$, denoted as $(\tilde{T},\zeta^L,\zeta^H)$ for brevity.

\begin{remark}\label{remark: pricing-shared-parameters}
    The linear and high-low pricing benchmark policies have $2K$ and $3K$ tuning parameters, respectively. Combined with tuning parameters in other policy components, a joint search is intractable even in our low-dimensional test problems. To make the search tractable, we impose a shared-parameter assumption across routes in our test problems; see Appendix \ref{appendix: prelimit_implementation_details}. It reduces number of tuning parameters to $2$ and $3$ in linear and high-low pricing benchmark policies, respectively.
\end{remark}

\begin{remark}
    One can also consider the nominal (resp. static) pricing policy adapted from \cite{Ata_Barjesteh_2023}, where the drift control rate in workload problem is assumed to be $\theta(t) = 0$ for $t \geq 0$ (resp. $\theta(t) = \bar{\theta}$ for $t \geq 0$ with tuning parameter $\bar{\theta} \in \Theta$). Since they are subsumed by both linear pricing and high-low pricing and lead to inferior numerical performance, we do not include them for brevity.
\end{remark}

\subsection{Scheduling Policies\label{sec: benchmark_scheduling}}

We introduce two alternative {scheduling} policies motivated by the least-relative-slack-type policies in literature (see \cite{Plambeck_Kumar_Harrison_2001}, \cite{Ata_2006_thin}, \cite{Celik_Maglaras_2008}). As mentioned earlier, we further incorporate safety stocks in the policy definition. Two policies have the same structure as our proposed scheduling policy (see Section \ref{sec: policy_scheduling}), and only differ in the choice of their index (i.e., $\mu_{k,l}P_{k,l}(t)$ in our proposed scheduling policy), which we specify next.

\subsubsection{Least-relative-slack Policy.}

The least-relative-slack policy lets the server
prioritize the job with the largest age relative to its throughput-time bound. In other words, it prioritizes the job with the least relative slack. Thus, the index for an nonempty buffer $(k,l)$ is defined as
\begin{align}
    \frac{a_{k,l}(t)}{d^n_k},\label{benchmark_scheduling_LRS}
\end{align}
where $a_{k,l}(t)$ denote the age of the oldest job in buffer $(k,l)$ at time $t$, and $d_k^n$ is the throughput-time bound for route $k$. The policy prioritizes the job with the highest index.

\subsubsection{Least-relative-slack Policy Based on Queue Lengths.}

The least-relative-slack policy based on queue lengths rests on a similar idea as the least-relative-slack policy, but it approximates the age using queue lengths. More specifically, we approximate the age $a_{k,l}(t)$ as follows:
\begin{align}
    a_{k,l}(t) \approx \frac{\sum_{j=1}^l Q^n_{k,j}(t)}{\lambda^n_k},\label{age_approx}
\end{align}
where the numerator counts number of jobs in buffer $(k,l)$ and its upstream buffers, and the denominator $\lambda_k^n$ is the arrival rate for route $k$. Plugging (\ref{age_approx}) into least-relative-slack index (\ref{benchmark_scheduling_LRS}) and noting that $d_k^n \lambda^n_k$ equals queue length bound $q_k^n$ on route $k$, we obtain the queue-length based index for an nonempty buffer $(k,l)$:
\begin{align}
\frac{\sum_{j=1}^l Q^n_{k,j}(t)}{q^n_k}.\label{benchmark_scheduling_LRS_queue_length}
\end{align}

\subsection{Choosing Policy Parameters\label{sec: benchmark_combination}}

\begin{table}[tbp]
\TABLE
{Policy parameters of benchmark policies in outsourcing and discretionary idling, pricing, and scheduling. \label{tab: benchmark_three_parts}}
{\begin{small}
\tabcolsep= 0.8em
\renewcommand{\arraystretch}{1.0}
\begin{tabular}{lcccc}
\toprule
 & &  & & Policy parameters \\
\midrule
\multicolumn{5}{l}{\textit{Panel A: Outsourcing and Discretionary Idling Benchmark Policies (Appendix \ref{sec: benchmark_outsourcing})}} \\ 
\midrule
Single pushing control per face policy
& & 
 & & 
$\epsilon, s, \rho \in \mathbb{R}$ \\
Threshold-based policy 
 & & 
 & &  $\rho \in \mathbb{R}$
 \\
\midrule
\multicolumn{5}{l}{\textit{Panel B: Pricing Benchmark Policies (Appendix \ref{sec: benchmark_pricing})}} \\
\midrule
Linear pricing policy
& &  
& &  
$(\phi,\phi_{-})\in \mathbb{R}^{K\times 2}$ \\
High-low pricing policy
& &  
& &  $(\tilde{T}, \zeta^L, \zeta^H)\in \mathbb{R}^{K\times 3}$
  \\
\midrule
\multicolumn{5}{l}{\textit{Panel C: Scheduling Benchmark Policies (Appendix \ref{sec: benchmark_scheduling})}} \\ 
\midrule
Proposed scheduling policy (Section \ref{sec: policy_scheduling})
 & &  & &  
$\tilde{\tau}\in \{\text{abs}, \text{rel}\},\tilde{\lambda}, \tilde{c}, s \in \mathbb{R}$
 \\
Least-relative-slack policy
& &  & & 
$s\in \mathbb{R}$ \\
Least-relative-slack policy based on queue lengths
& & & &  
$s\in \mathbb{R}$  \\
\bottomrule
\end{tabular}
\end{small}}
{}
\end{table}

The policy parameters for each of our $12$ benchmark policies include policy parameters for its three components; see Table \ref{tab: benchmark_three_parts}. We determine these parameters via simulation-based search, where we seek to minimize costs while ensuring that the probability of violating the delay constraints does not exceed $\varepsilon = 1\%$. Let us elaborate by separately considering two groups of benchmark policies.

The first group includes $8$ benchmark policies using least-relative-slack policy or least-relative-slack policy based on queue lengths. In this group, the policy parameters include $\rho$, $s$, $\epsilon$ (if using single-pushing policy), and pricing parameters (see Panel B of Table \ref{tab: benchmark_three_parts}). For these benchmark policies, we perform a joint search for all policy parameters, and select the one with the lowest cost subject to the delay probability constraint $\varepsilon = 1\%$.

\begin{algorithm}[htbp]
\begin{small}
\caption{Parameter Search for Benchmark Policies Using Proposed Scheduling Policy.}
\label{alg:parameter_search}
\KwIn{Search grids $\Pi_o$, $\Pi_p$, $\Pi_s$ for $\pi_o$, $\pi_p$, and $\pi_s$; delay probability constraint $\varepsilon$; a simulator returning the cost $C(\pi)$ and the delay probabilities $D(\pi)$ of any parameter triple $\pi = (\pi_o, \pi_p, \pi_s)$.}
\KwOut{Selected parameter triple $(\pi_o^*, \pi_p^*, \pi_s^*)$.}
\Fn{\textsc{ParameterSearch($\Pi_o$, $\Pi_p$, $\Pi_s$, $\varepsilon$)}}{
    $\mathcal{F} \gets \emptyset$\;
    \For{$\pi_o \in \Pi_o$}{
        $(\pi_p^*(\pi_o), \pi_s^*(\pi_o)) \gets$ \textsc{InnerLoop}($\pi_o,\Pi_p,\Pi_s,\varepsilon$)\tcp*{see Algorithm \ref{alg:inner_loop}}
        \lIf{$(\pi_p^*(\pi_o), \pi_s^*(\pi_o)) \neq \emptyset$}{
            $\mathcal{F} \gets \mathcal{F} \cup \{(\pi_o, \pi_p^*(\pi_o), \pi_s^*(\pi_o))\}$
        }
    }
    \KwRet $\argmin_{(\pi_o,\pi_p,\pi_s) \in \mathcal{F}} C(\pi_o, \pi_p, \pi_s)$
}
\end{small}
\end{algorithm}

\begin{algorithm}[tbp]
\begin{small}
\caption{Inner Loop: Alternating Pricing and Scheduling Search for a Given $\pi_o$.}
\label{alg:inner_loop}
\KwIn{Outer-loop parameter $\pi_o$; search grids $\Pi_p$, $\Pi_s$; delay probability target $\varepsilon$; initial scheduling parameter $\pi_s^{\mathrm{init}}$; maximum number of iterations $\bar{K}$.}
\KwOut{Pricing and scheduling parameters $(\pi_p^*(\pi_o), \pi_s^*(\pi_o))$, or $\emptyset$ if no feasible choice.}
\Fn{\textsc{InnerLoop($\pi_o,\Pi_p,\Pi_s,\varepsilon$)}}{
    $\pi_s^{(0)} \gets \pi_s^{\mathrm{init}}$;\quad $k \gets 0$\;
    \Repeat{$\pi_s^{(k)} = \pi_s^{(k-1)}$ \textbf{\emph{or}} $\mathcal{S} = \emptyset$ \textbf{\emph{or}} $k = \bar{K}$\tcp*[f]{stop if: converged / infeasible / max iter}}
    {
        $k \gets k + 1$\;
        $\pi_p^{(k)} \gets \argmin_{\pi_p \in \Pi_p} C(\pi_o, \pi_p, \pi_s^{(k-1)})$\tcp*{pricing search; without delay-prob constraint$^{\dagger}$}
        $\mathcal{S} \gets \{\pi_s \in \Pi_s : D(\pi_o, \pi_p^{(k)}, \pi_s) \leq \varepsilon\}$\tcp*{scheduling search; with delay-prob constraint}
        \lIf{$\mathcal{S} \neq \emptyset$}{$\pi_s^{(k)} \gets \argmin_{\pi_s \in \mathcal{S}} C(\pi_o, \pi_p^{(k)}, \pi_s)$}
    }
    \lIf{$\mathcal{S} = \emptyset$}{\KwRet $\emptyset$}
    \KwRet $(\pi_p^{(k)}, \pi_s^{(k)})$\;
}
\vspace{0.5ex}
\hspace*{-\algomargin}%
\begin{minipage}{\dimexpr\linewidth}
\renewcommand{\baselinestretch}{1}\footnotesize
$^{\dagger}$ The delay probability constraint is not imposed in the pricing search, because delay probabilities are primarily controlled by the outsourcing/idling and scheduling decisions; imposing the constraint here would force $\pi_p$ to take unreasonably large values to compensate for limitations in those decisions.
\end{minipage}
\end{small}
\end{algorithm}

The second group includes $4$ benchmark policies that use the proposed scheduling policy. In this group, the policy parameters include $\rho$, $s$, $\epsilon$ (if using single-pushing policy), pricing parameters (see Panel B of Table \ref{tab: benchmark_three_parts}), and scheduling parameters $\tilde{\tau}$, $\tilde{\lambda}$, $\tilde{c}$. Due to the large number of policy parameters, a joint search over all parameters is computationally intractable. We therefore adopt the sequential approach. More specifically, we do the parameter search in two nested loops, which inherently classifies policy parameters into three types. The outer loop enumerates parameters $\pi_o: = (\rho,s,\epsilon,\tilde{\tau})$, while the inner loop alternatively search over two sets of parameters, namely pricing $\pi_p:=(\text{pricing parameters in Panel B of Table \ref{tab: benchmark_three_parts}})$ and scheduling $\pi_s:=(\tilde{\lambda},\tilde{c})$, until a stopping criterion is met. Details of parameter search in this group are provided in Algorithm \ref{alg:parameter_search}, where the inner-loop function is given in Algorithm \ref{alg:inner_loop}.

\section{Details in Test Problems\label{appendix: test_problem_details}}

\subsection{Problem Parameters in Brownian Control Problem and Workload Problem}\label{appendix: test_problem_numerical_parameters}

This section starts from the model primitives in $n$th system (see Table \ref{tab: Prelimit_parameters}), and provides derived Brownian control problem (\ref{BCP}) and workload problem (\ref{EWF}) for two test problems. Values are given in exact fractional form here, whereas round to two decimals in Section \ref{sec: test_problems}.

\subsubsection{Rybko-Stolyar
Network.\label{appendix: test_problem_numerical_parameter_RS}}

With system parameter $n=400$, the model primitives in $n$th system are as follows. The demand function
$$\Lambda^n(x) = (950/3 - 190x_1/3, 950/3 - 190x_2/3),$$
the production cost $\delta^n = (1, 1)$, the (nominal) instantaneous demand rate $n\lambda^* = (380/3,380/3)$, the outsourcing cost $\nu^n = (0.05, 0.125)$, and the service rate $\mu^n = (400,200,400,200)$. Besides, the throughput time bound is $d^n = (1/3, 1/3)$, thus the queue-length bound is $q^n = (n\lambda^*)d^n = (380/9, 380/9)$. And the interest rate is $r^n = 0.01$.

\textit{Problem Parameters in Brownian Control Problem.} First, according to Assumption \ref{assumption: heavy_traffic} we have $\mu = (0.95, 0.475, 0.95, 0.475)$, $\eta = (1,0.5,1,0.5)$, and in Equation (\ref{lambda_assumption}) for $\lambda^n(t)$ we have $\lambda^* = 0.95\cdot (1/3,1/3)$. Then, the long-run average fraction of service time $x^* = (1/3,2/3,1/3,2/3)$. In scaled queue length process
(\ref{BCP_Z_k_1})--(\ref{BCP_Z_k_l}), $X$ is $4$-dimensional Brownian motion
with drift $\varphi$ and variance $\Sigma$,
which are defined in (\ref{X_BM_drift_variance}) and have
values
\begin{equation}
\varphi=\begin{pmatrix}
    -1/3\\
0\\
-1/3\\
0
\end{pmatrix}
,\quad\Sigma=0.95\begin{pmatrix}
2/3 & -1/3 &  & \\
-1/3 & 2/3 &  & \\
&  & 2/3 & -1/3\\
&  & -1/3 & 2/3
\end{pmatrix} . \label{vartheta_Sigma_RS}%
\end{equation}
The (scaled) queue-length bounds vector is $\beta =q^n/\sqrt{n} = (19/9,19/9)$. The outsourcing cost for each (scaled) unit of product is $\alpha = \nu^n \sqrt{n} = (1,2.5)$. And the interest rate $\tilde{r} = r^n = 0.01$.

\textit{Problem Parameters in Workload Problem.} First, in the definition (\ref{workload_process}) of workload process $W$, we have $m_{1,1} = m_{2,1} = 20/19$, $m_{1,2} = m_{2,2} = 40/19$, and the drift $\gamma$ and variance $\Gamma$ of Brownian motion $\xi$ are%
\begin{equation}
\gamma=\frac{20}{19}\begin{pmatrix}
-1\\
-1%
\end{pmatrix}
,\quad\Gamma=\frac{20}{19}\begin{pmatrix}
10/3 & 4/3\\
4/3 & 10/3
\end{pmatrix}, \label{gamma_Gamma_RS}%
\end{equation}
which are calculated based on definition of $\gamma$ and $\Gamma$ in
(\ref{gamma_Gamma_def}), numerical values of $\varphi$ and $\Sigma$ in (\ref{vartheta_Sigma_RS}), and
the numerical value of $M$,%
\[
M=\frac{20}{19}\begin{pmatrix}
1 & 0 & 2 & 2\\
2 & 2 & 1 & 0
\end{pmatrix}.
\]
Second, the state space $\mathcal{W}$ for
workload process $W$ defined in (\ref{W_state_space}) is%
\begin{equation}
\mathcal{W}=\left\{  w\in \mathbb{R}_+^2:\ 
w_{1},w_{2}\leq 20/3,\ 
w_{2}\geq2w_{1}-80/9,\ 
w_{1}\geq2w_{2}-80/9
\right\}  . \label{W_state_space_RS_appendix}%
\end{equation}
Third, given the profit rate function $\Pi(x)$ defined via demand function $\Lambda(x)$ and production cost $\delta$, where $\Lambda(x) = \Lambda^n(x)/n$ is given in Equation (\ref{assumption_Lambda_n_mu_n}), the matrix $H$ is 
$$H = \begin{pmatrix}
    120/19 & 0 \\
    0 & 120/19
\end{pmatrix},$$
the cost function $c(x)$ for effective drift rate $x\in\mathbb{R}^{2}$
and the corresponding optimal drift rate $\zeta^{\ast}(x)$ defined in
(\ref{c_x}) with closed-form expression in Lemma \ref{lemma:c_x} are%
\begin{equation}
c(x)=x^{\top} \begin{pmatrix}
    19/6 & -38/15\\
    -38/15 & 19/6
\end{pmatrix} x\quad \text{and}\quad \zeta^{\ast}(x)=\begin{pmatrix}
-19/60 & 19/30\\
19/30 & -19/60
\end{pmatrix}
x.\quad\label{c_zeta_RS}%
\end{equation}
Fourth, the singular control $\mathcal{U}$ has pushing matrix $\tilde{G}$ and costs vector $\tilde{\alpha}$ given as:
\begin{align}
    \tilde{G} = \begin{pmatrix}1&0&-20/19&-40/19\\
0&1&-40/19&-20/19\end{pmatrix},\quad \tilde{\alpha} = \begin{pmatrix}
    0 & 0 & 1 & 2.5
\end{pmatrix}^\top.\label{singular_direction_cost_RS_appendix}
\end{align}
Lastly, the interest rate is $\tilde{r} = 0.01$.

\subsubsection{Extended Criss-cross
Network.\label{appendix: test_problem_numerical_parameter_CC}}

With system parameter $n=400$, the model primitives in $n$th system are as follows. The demand function 
$$\Lambda^n(x) = (475 - 95x_1,\dots,475 - 95x_{p+1}),$$
the production cost $\delta^n = (1,\dots, 1)$, the (nominal) instantaneous demand rate $n\lambda^* = (190,\dots,190)$, the outsourcing cost $\nu^n = (0.05,\dots,0.05, 0.025p)$, and the service rate $\mu^n = (400,\dots,400)$. Besides, the throughput time bound is $d^n = (d^n_v, \dots, d^n_v, d^n_h)$, where we set $(d^n_v,d^n_h)$ equal to $(0.15,0.225)$ and $(0.15, 0.6)$ for the $3$-server and $10$-server cases respectively; we keep general form of $(d^n_v,d^n_h)$ in the following related quantities. The queue-length bounds vector is $q^n = (n\lambda^*)d^n =  (190d^n_v,\dots, 190d^n_v, 190d^n_h)$. And the interest rate is $r^n = 0.01$.

\textit{Problem Parameters in Brownian Control Problem.} First, according to Assumption \ref{assumption: heavy_traffic} we have $\mu = (0.95,\dots,0.95)$, $\eta = (1,\dots,1)$, and in Equation (\ref{lambda_assumption}) for $\lambda^n(t)$ we have $\lambda^* = 0.95\cdot (1/2,\dots,1/2)$. Then, the long-run average fraction of service time $x^* = (1/2,\dots,1/2)$. In scaled queue length process
(\ref{BCP_Z_k_1})--(\ref{BCP_Z_k_l}), $X$ is $2p$-dimensional Brownian
motion with drift $\varphi$ and variance
$\Sigma$, which are defined in (\ref{X_BM_drift_variance})
and have values
\begin{equation}
\varphi=\begin{pmatrix}
-1/2\\
\vdots\\
-1/2\\
-1/2\\
0\\
\vdots\\
0
\end{pmatrix}
,\quad\Sigma=0.95\begin{pmatrix}
1 &  &  &  &  &  & \\
& \ddots &  &  &  &  & \\
&  & 1 &  &  &  & \\
&  &  & 1 & -1/2 &  & \\
&  &  & -1/2 & \ddots & \ddots & \\
&  &  &  & \ddots & \ddots & -1/2\\
&  &  &  &  & -1/2 & 1
\end{pmatrix}. \label{vartheta_Sigma_CC}%
\end{equation}
The (scaled) queue-length bounds vector is $\beta = q^n/\sqrt{n} = (19d^n_v/2,\dots,19d^n_v/2,19d^n_h/2)$. The outsourcing cost for each (scaled) unit of product is $\alpha = \nu^n \sqrt{n} =(1,\dots,1,0.5p)$. And the interest rate is $\tilde{r} = r^n = 0.01$.

\textit{Problem Parameters in Workload Problem.}
First, in definition (\ref{workload_process}) of workload process $W$, we have $m_{k,l} = 20/19$ for $k = 1,\dots,K$, $l = 1,\dots,n_k$, and the drift $\gamma$ and 
covariance $\Gamma$ of Brownian motion $\xi$ are%
\begin{equation}
\gamma=\frac{20}{19}\begin{pmatrix}
-1\\
\vdots\\
-1%
\end{pmatrix}
,\quad\Gamma=\frac{20}{19}\begin{pmatrix}
2 &1/2&\dots&1/2\\
1/2&\ddots&\ddots&\vdots\\
\vdots&\ddots&\ddots&1/2\\
1/2&\dots&1/2&2
\end{pmatrix}
,
\label{gamma_Gamma_CC}%
\end{equation}
which are calculated based on definition of $\gamma$ and $\Gamma$ in
(\ref{gamma_Gamma_def}), numerical values of $\varphi$ and $\Sigma$ in (\ref{vartheta_Sigma_CC}), and
the numerical value of $p\times(2p)$ dimensional matrix $M$,%
\[
M=\frac{20}{19}\begin{pmatrix}
1 &  &  & 1 &  & \\
& \ddots &  & \vdots & \ddots & \\
&  & 1 & 1 & \cdots & 1%
\end{pmatrix}.
\]
Second, the state space $\mathcal{W}$ as defined in (\ref{W_state_space}) for
workload process $W$ is%
\begin{equation}
\mathcal{W}=\left\{  w\in \mathbb{R}_+^{p}:\ w_i\leq w_j +10d^n_v,\ 1\leq i<j\leq p;\ w_i \leq 10d^n_v + 10d^n_h,\ i = 1,\dots,p \right\}. \label{W_state_space_CC_appendix}%
\end{equation}
Third, given the profit rate function $\Pi(x)$ defined via demand function $\Lambda(x)$ and production cost $\delta$, where $\Lambda(x) = \Lambda^n(x)/n$ is given in Equation (\ref{assumption_Lambda_n_mu_n}), the matrix $H$ is 
$$H = \operatorname{diag} \begin{pmatrix}
    80/19 & \cdots & 80/19
\end{pmatrix}.$$
The cost function $c(x)$ for effective drift rate $x\in\mathbb{R}%
^{p}$ and the corresponding optimal drift rate $\zeta^{\ast}(x)$ defined in
(\ref{c_x}) with closed-form expression in Lemma \ref{lemma:c_x} are%
\begin{equation}
c(x)=x^{\prime}(\tilde{M}H^{-1}\tilde{M}^{\prime})^{-1}x,\qquad\zeta^{\ast
}(x)=H^{-1}\tilde{M}^{\prime}(\tilde{M}H^{-1}\tilde{M}^{\prime})^{-1}x,
\label{c_zeta_CC}%
\end{equation}
where the $p\times (p+1)$ dimensional matrix $\tilde{M}$ is 
\begin{align}
\tilde{M}=\frac{20}{19}\begin{pmatrix}
1 &  &  & 1 \\
& \ddots &  & \vdots  \\
&  & 1 & 1 
\end{pmatrix}.\label{M_tilde_ECC}
\end{align}
Fourth, the singular control $\mathcal{U}$ has direction matrix $\tilde{G}$ and cost vector $\tilde{\alpha}$:
$$\tilde{G} = \begin{pmatrix}
    I_p & -\tilde{M}
\end{pmatrix},\quad \tilde{\alpha} = \begin{pmatrix}
    0 & \cdots & 0 & 1 & \cdots & 1 & 0.5p
\end{pmatrix}^\top.$$
Lastly, the interest rate is $\tilde{r} = 0.01$.

\subsection{Implementation Details of Computational Method\label{appendix: computational_implementation_detail}}

This section provides implementation details of our computation method, including backup controls and the algorithms for solving the Skorokhod problem in Appendix \ref{appendix: implementation_reflection_mapping}, and neural network training setup in Appendix \ref{appendix: implementation_parameters_NN}.

\subsubsection{Backup Controls and Algorithm for Solving the Skorokhod Problem.\label{appendix: implementation_reflection_mapping}}

For each test problem, we specify the backup controls, including their pushing directions and associated costs, and the algorithm used to solve the resulting Skorokhod problem. The amounts of pushing reported below are aggregated over faces sharing the same backup-control direction and cost coefficient.

\paragraph{Rybko-Stolyar Network.}

We set the backup controls for the six faces $F_1,\ldots,F_6$ (in clockwise order from the origin; see Figure \ref{fig:RS_state_space}) as $\hat{L}_1,\hat{L}_1,\hat{U}_1,\hat{U}_1,\hat{L}_2,\hat{L}_2$, where $\hat{L}_1$ (resp. $\hat{L}_2$, $\hat{U}_1$) has the same pushing directions as $L_1$ (resp. $L_2$, $U_1$), but has cost $0.05$ (resp. $0.05$, $1.2$).
Thus, the reflection matrix is
$\hat{G} = \left(\tilde{g}_1,\ \tilde{g}_1, \ \tilde{g}_3,\ \tilde{g}_3,\ \tilde{g}_2,\  \tilde{g}_2\right)$.
Under this choice, the Skorokhod mapping admits a closed-form
expression, bypassing the need for an iterative solver.
Given a point $w=(w_1,w_2)\in\mathbb{R}^2 \backslash \mathcal{W}$, where $\mathcal{W}$ is given by (\ref{W_state_space_RS_appendix}), the projected boundary point
$\tilde{w}\in\partial\mathcal{W}$ is obtained sequentially by
\begin{equation}
\begin{aligned}
w^\circ
&= w+\tilde{g}_3
   \max\left\{
       0,\frac{w_1-b_1}{\tilde{M}_{11}},
       \frac{w_2-b_1}{\tilde{M}_{21}}
   \right\},\\
\tilde{w}_1
&= \max\left\{w^\circ_1, 0,\,2w^\circ_2-b_2\right\},\\
\tilde{w}_2 
&= \max\left\{w^\circ_2, 0,\,2\tilde{w}_1-b_2\right\}.
\end{aligned}\label{RS_skorokhod_explicit}
\end{equation}
Here, $w^\circ=(w^\circ_1,w^\circ_2)$ is an intermediate point, $b_1:=20/3$, $b_2:=80/9$, $\tilde{M}$ is given by (\ref{c_x_direction}) with $\tilde{M}_{11} = 20/19$, $\tilde{M}_{21} = 40/19$. Intuitively, the three steps in Equation (\ref{RS_skorokhod_explicit}) sequentially apply $\hat{U}_1$, $\hat{L}_1$, and $\hat{L}_2$. The corresponding push amount for each backup control is
$$\hat{L}_i:(\tilde{w}_i-w^\circ_i)^+,\ i=1,2,\quad \text{and}\quad \hat{U}_1:(w_1-w^\circ_1)^+/\tilde{M}_{11}.$$

\paragraph{Extended Criss-cross Network.}

We set the backup controls for each type of faces (see Table \ref{tab:faces_ECC}): 
\begin{equation} 
\begin{aligned} 
\text{i)}& \quad w_i = 0,\quad  1\leq i \leq p; && \text{ use }\hat{L}_i ;\\
\text{ii)}& \quad w_i = w_j + 10d^n_v,\quad  1\leq i<j\leq p; && \text{ use }\hat{L}_j;\\
\text{iii)}& \quad w_i = 10d^n_v + 10d^n_h,\quad 1\leq i \leq p;&& \text{ use }\hat{U}_i. 
\end{aligned}\label{extended_criss_cross_faces_backup} 
\end{equation} 
Here, $\hat{L}_i$ and $\hat{U}_i$ ($i = 1,\dots,p$) have the same pushing directions as $L_i$ and $U_i$, respectively; the costs of $\hat{L}_i$ are $0.05$ and $0.1$ in the 3-server and 10-server problems, respectively, and the cost of $\hat{U}_i$ is $24/19$ in both problems.

Under this choice, the Skorokhod mapping admits a closed-form expression, bypassing the need for an iterative solver. Given a point $w = (w_1,\dots,w_{p})\in \mathbb{R}^p\backslash \mathcal{W}$, the projected boundary point $\tilde{w} \in \partial\mathcal{W}$ is obtained sequentially by
\begin{align} 
\tilde{w}_1 = w_1|_{[0,b_v+b_h]},\quad
\tilde{w}_i = w_i |_{[\max(0,\max_{j\leq i-1} \tilde{w}_j - b_v),b_v + b_h]},\ i = 2,\dots,p,\nonumber
\end{align}
where $b_v:=10d^n_v$, $b_h:=10d^n_h$, and $a|_{[b,c]} := \min(\max(a,b),c)$. The corresponding amount of pushing for each backup control is
\begin{align}
\hat{L}_i:(\tilde{w}_i - w_i)^+\quad \text{and} \quad \hat{U}_i : (w_i - \tilde{w}_i)^+/\tilde{M}_{ii}\quad (i=1,\dots,p).\nonumber
\end{align}
Here, the value of $\tilde{M}_{ii}$ is given in Equation (\ref{M_tilde_ECC}).

\subsubsection{Neural Network Training Setup.\label{appendix: implementation_parameters_NN}}

\paragraph{Common Parameters Across All Test Problems.} Batch size $\bar{B} = 256$, time horizon $T = 0.1$, discretization step size $h = 0.1/64$, reference policy $\tilde{\psi} = 0$. The optimizer is Adam, and the activation function is elu.

\paragraph{Parameters Specific to Each Test Problem.}

Table \ref{tab: NN_parameters} lists the hyperparameters used for each test problem. These include the problem dimension and network architecture (\#Hidden layers and \#Neurons in each layer), the training schedule (\#Iterations, \#Epochs, and learning rate scheme), the upper bound $b$ on the drift control $\psi$ together with its growth rate (pace; see Equation (\ref{trick_increase_bound}) below), and the reference policy $\tilde{\theta}$. 

For each training run, we first simulate the reference process and discard the first 1{,}000 iterations as a warm-up period. We then save the remaining simulated iterations as the training data. One epoch consists of one complete pass through these retained data, and the networks are trained for \#Epochs epochs.

Recall that we train a separate neural network for each value of the policy parameter $\rho$. Within each test problem, the hyperparameters in Table \ref{tab: NN_parameters} are shared across all $\rho$, except for the reference policy $\tilde{\theta}$, which we choose adaptively to $\rho$. In reporting reference policy $\tilde{\theta}$, we parameterize it through total drift vector $\gamma + \tilde{\theta}$ under reference policy, assume identical value for all entries, and denote this common value by a scalar. For each $\rho$, we select this scalar from a short candidate list, discarding values that yield visually undertrained networks. The last row of Table \ref{tab: NN_parameters} reports the selected $\gamma + \tilde{\theta}$ for $11$ values of $\rho$ around the optimum.

\begin{table}[t]
\TABLE
{Parameters used in neural network training. ECC denotes extended criss-cross network. We assume the total drift $\gamma + \tilde{\theta} \in \mathbb{R}^p$ under the reference policy has identical entries and denote it by a scalar.\label{tab: NN_parameters}}
{\begin{small}
\tabcolsep= 0.2em
\renewcommand{\arraystretch}{0.9}
\begin{tabular}{lllllll}
\toprule
 & & Rybko-Stolyar & &
$3$-server ECC & &
$10$-server ECC \\
\midrule
Dimension & & $2$ & & $3$ & & $10$ \\
\#Hidden layers & & $3$ & & $3$ & & $4$ \\
\#Neurons in each layer & & $100$ & & $100$ & & $200$ \\
\#Iterations & & $6000$ & & $6000$ & & $9000$ \\
\#Epochs & & $40$ & & $50$ & & $100$ \\
\addlinespace[4pt]
Learning rate scheme & &
{\footnotesize\begin{tabular}{@{}l@{}}
5e-4 (0,2.8e4), \\ 3e-4 (2.8e4,6e4), \\
1e-4 (6e4,1e5), \\ 3e-5 (1e5,$\infty$)
\end{tabular}}
& &
{\footnotesize\begin{tabular}{@{}l@{}}
5e-4 (0, 1.9e4), \\ 3e-4 (1.9e4, 4.4e4), \\
1e-4 (4.4e4, 7e4), \\ 3e-5 (7e4, 1.4e5), \\
1e-5 (1.4e5, $\infty$)
\end{tabular}}
& &
{\footnotesize\begin{tabular}{@{}l@{}}
5e-4 (0, 1.9e4), \\ 3e-4 (1.9e4, 4.4e4), \\
1e-4 (4.4e4, 7e4), \\ 5e-5 (7e4, 1.4e5), \\
3e-5 (1.4e5, 2.7e5), \\ 1e-5 (2.7e5, $\infty$)
\end{tabular}} \\
\addlinespace[4pt]
Upper bound $b$ & & $100$ & & $200$ & & $100$ \\
Pace & & $10$ & & $10$ & & $20$ \\

\addlinespace[4pt]
{\begin{tabular}{@{}l@{}}
Total drift $\rho\mapsto \gamma + \tilde{\theta}$\\
under reference policy
\end{tabular}}
& &
{\footnotesize\begin{tabular}{@{}l@{}}
$0.95 \mapsto -0.8$, $0.96 \mapsto -0.8$,\\
$0.97 \mapsto -0.7$, $0.98 \mapsto -1.0$,\\
$0.99 \mapsto -0.9$, $1.00 \mapsto -0.9$,\\
$1.01 \mapsto -0.8$, $1.02 \mapsto -1.0$,\\
$1.03 \mapsto -1.0$, $1.04 \mapsto -0.9$,\\
$1.05 \mapsto -1.0$
\end{tabular}}
& &
{\footnotesize\begin{tabular}{@{}l@{}}
$0.85 \mapsto -0.8$, $0.86 \mapsto -0.8$,\\
$0.87 \mapsto -0.8$, $0.88 \mapsto -0.8$,\\
$0.89 \mapsto -0.8$, $0.90 \mapsto -0.8$,\\
$0.91 \mapsto -0.8$, $0.92 \mapsto -0.8$,\\
$0.93 \mapsto -0.8$, $0.94 \mapsto -0.8$,\\
$0.95 \mapsto -0.8$
\end{tabular}}
& &
{\footnotesize\begin{tabular}{@{}l@{}}
$0.85 \mapsto -0.4$, $0.86 \mapsto -0.4$,\\
$0.87 \mapsto -0.3$, $0.88 \mapsto -0.4$,\\
$0.89 \mapsto -0.5$, $0.90 \mapsto -0.6$,\\
$0.91 \mapsto -0.4$, $0.92 \mapsto -0.5$,\\
$0.93 \mapsto -0.6$, $0.94 \mapsto -0.4$,\\
$0.95 \mapsto -0.4$
\end{tabular}} \\
\bottomrule
\end{tabular}
\end{small}}
{}
\end{table}

In neural network training, we also adopt the following implementation choices introduced in \cite{Ata_Harrison_Si_2024_drift, Ata_Harrison_Si_2024_singular}:

\textit{1) Gradual increase of the upper bound $b$.} For all test problems, we gradually increase upper bound $b$ on drift control $\psi$ during training: at each iteration, $b$ is replaced by
\begin{align}
    \hat{b} = 
    \min\left(b,\  0.2b +\frac{\text{Iteration}}{40\cdot \text{Dim} \cdot \text{Pace}}\right),\label{trick_increase_bound}
\end{align}
where Dim is the dimension of problem, and Pace (listed in Table \ref{tab: NN_parameters}) controls the rate of increase.

\textit{2) Penalty term for the Rybko-Stolyar network.} 
Larger workload leads to higher future outsourcing costs, so the value function of the original singular-control problem should be monotone: $\nabla V(\cdot) \geq 0$. To encourage this during training, we add the penalty term $\Vert G_{w_2}(\cdot)^-\Vert^2$
to the loss function. 

\textit{3) Variance loss function.} For all test problems, we use a variance loss function in training rather than mean squared error. 
It is introduced in Appendix E.2 of \cite{Ata_Harrison_Si_2024_drift} to accelerate the training speed when interest rate is small. 
To be specific, we parameterize the value function as $V_{w_1}(x) = \hat{V}_{w_1}(x) + \xi_0$. Note that $\partial V_{w_1}(x)/ \partial x = \partial \hat{V}_{w_1}(x)/\partial x$. Rewriting the loss function (\ref{loss_function_SDE}), we have
\begin{align}
    \ell (w_1,w_2) &= \mathbb{E}\bigg[\bigg(e^{-\tilde{r} T}(\hat{V}_{w_1}(\tilde{W}(T))+\xi_0)-(\hat{V}_{w_1}(\tilde{W}(0))+\xi_0)  -\int_{0}^{T}e^{-\tilde{r} t}G_{w_2}(\tilde{W}(t))\cdot dB(t)\bigg.\nonumber\\
    &  \bigg.+\int_{0}^{T}e^{-\tilde{r} t}F(G_{w_2}(\tilde{W}(t)))dt+\int_{0}^{T}e^{-\tilde{r} t}%
    \hat{\alpha}\cdot d\tilde{\Upsilon}(t)\bigg)^2\bigg], \label{loss_function_SDE_rewrite}%
\end{align}
By optimizing $\xi_0$, we obtain the following variance loss function
\begin{align}
    \tilde{\ell} (w_1,w_2) &= \operatorname{Var}\bigg[e^{-\tilde{r} T}\hat{V}_{w_1}(\tilde{W}(T))-\hat{V}_{w_1}(\tilde{W}(0))  -\int_{0}^{T}e^{-\tilde{r} t}G_{w_2}(\tilde{W}(t))\cdot dB(t)\bigg.\nonumber\\
    &  \bigg.+\int_{0}^{T}e^{-\tilde{r} t}F(G_{w_2}(\tilde{W}(t)))dt+\int_{0}^{T}e^{-\tilde{r} t}%
    \hat{\alpha}\cdot d\tilde{\Upsilon}(t)\bigg]. \label{loss_function_SDE_variance}%
\end{align}
Note that $\hat{V}_{w_1}(\cdot)$ obtained via loss function (\ref{loss_function_SDE_variance}) determines the value function $V_{w_1}(\cdot)$ only up to an additive constant, and we only need gradient $G_{w_2}(\cdot)$ in determining our proposed policy.

\subsection{Implementation Details of Prelimit Simulation\label{appendix: prelimit_implementation_details}}

In this section, we provide implementation details of prelimit simulation, including our rationale in setting parameter search magnitude for pricing benchmark policies in Appendix \ref{appendix: prelimit_implementation_details_magnitude}, the shared-parameter assumption in pricing benchmark policies in Appendix \ref{appendix: prelimit_implementation_details_shared}, parameter-search setup in Appendix \ref{appendix: prelimit_implementation_details_search}, and simulation setup in Appendix \ref{appendix: prelimit_implementation_details_simulation}.

\subsubsection{Reference Parameter Magnitude for Pricing Benchmark Policies.\label{appendix: prelimit_implementation_details_magnitude}}
To provide a reasonable grid for the pricing parameters search, we note that the appropriate magnitude of pricing control in our problem is closely related to the outsourcing control. Indeed, since the alternative outsourcing option exists, a rational decision maker would not use pricing controls whose cost is dominated by using outsourcing. We utilize such rationale to provide a reference magnitude for the pricing parameters search, as follows.

Assume in the $n$th system arrival rate $\lambda^n(t) = n\lambda^* - \sqrt{n}\Delta\lambda(t)$. The arrival-rate deduction measured by $\sqrt{n}\Delta\lambda(t)$ can be achieved by either outsourcing or pricing. In approximating Brownian control problem, the outsourcing cost to achieve it is linear in $\Delta \lambda(t)$ with marginal cost $\alpha$, while the pricing cost rate for it is quadratic, $\Delta \lambda(t)^{\prime}H\Delta \lambda(t)$, with marginal cost $2H\Delta \lambda(t)$ strictly increasing in $|\Delta \lambda(t)|$. Equating the two marginal costs identifies the upper bound beyond which pricing is dominated by outsourcing:
\begin{equation}
\label{eq:mc-equality}
\alpha = 2H\Delta \lambda(t) \quad\Rightarrow\quad \Delta \lambda(t) = \frac{1}{2}H^{-1}\alpha.
\end{equation}
Equation~\eqref{eq:mc-equality} provides a reference magnitude for the drift control rate in Brownian control problem, based on which we define the following reference magnitude for high-low pricing:
\begin{equation}
\label{eq:zeta-ref}
\tilde\zeta := \frac{1}{2}H^{-1}\alpha.
\end{equation}
For linear pricing, assuming that the drift control $\zeta_k(t)$ equals $-\tilde\zeta_k$ in Equation (\ref{eq:zeta-ref}) when queue lengths of route $k$ hits its bound $q^n_k$ and $\phi_{-k} = 0$ in Equation (\ref{benchmark_pricing_linear_zeta}), we obtain a reference magnitude for the linear coefficient $\phi_k$:
\begin{equation}
\label{eq:phi-ref}
\tilde\phi_k = -\frac{\tilde\zeta_k}{q^n_k} = -\frac{1}{2}\frac{(H^{-1}\alpha)_k}{q^n_k}, \quad k = 1, \ldots, K.
\end{equation}
The numerical reference values of $\tilde{\zeta}$ and $\tilde{\phi}$ (see Equations (\ref{eq:zeta-ref})--(\ref{eq:phi-ref})) in our test problems are summarized in Table~\ref{tab:ref-magnitude}. 

\begin{table}[htbp]
\TABLE
{Reference magnitudes for high-low and linear pricing parameters. Rybko-Stolyar entries are for (route~1, route~2); ECC represents extended criss-cross network, whose entries are for (vertical route, horizontal route). \label{tab:ref-magnitude}}
{\begin{tabular}{lccc}
\toprule
 & Rybko-Stolyar & 3-server ECC & 10-server ECC \\
\midrule
High-low: $\tilde\zeta$ & $(0.08,\,0.20)$ & $(0.12,\,0.18)$ & $(0.12,\,0.59)$ \\
Linear: $\tilde\phi$    & $(-0.002,\,-0.0045)$ & $(-0.004,\,-0.004)$ & $(-0.004,\,-0.005)$\\
\bottomrule
\end{tabular}}
{}
\end{table}

It is worth mentioning that the derivation and resulting numerical values of $\tilde{\zeta}$ and $\tilde{\phi}$ only serves as a reference magnitude of the pricing parameters in our search grid, rather than a preset choice or a strict upper bound of parameter. In fact, while the rationale that pricing controls dominated by outsourcing would not be used holds broadly in the original system we consider, the specific numerical values derived above can depend on the modeling assumptions (e.g., the quadratic form of pricing cost). The detailed setup of the parameter search grid will be described later in this section.

\subsubsection{Shared-parameter Assumption in Pricing Benchmark Policies.\label{appendix: prelimit_implementation_details_shared}}

As said in Remark \ref{remark: pricing-shared-parameters}, we impose a shared-parameter assumption across routes for pricing benchmark parameters. We consider two variants.

The idea of first variant lies in applying a common percentage price change relative to the nominal price across routes during congestion periods. For high-low pricing, this means the congestion-period drift-control magnitude $\zeta_k^H$ is shared across routes; we also assume the non-congestion drift-control magnitude $\zeta_k^L$ and the threshold ratio $\tilde T_k / q^n_k$ are shared. For linear pricing, the analogous condition is that $\phi_k q^n_k$, which equals the drift control when route $k$'s queue length hits its bound $q^n_k$, is shared across routes. Since Equation (\ref{benchmark_pricing_linear_zeta}) also involves a cross-route coefficient $\phi_{-k}$ acting on the other routes' queue lengths (with scale $q^n_{-k} := \sum_{k' \neq k} q^n_{k'}$), we also assume $\phi_{-k} q^n_{-k}$ to be shared. Formally, for $k = 1, \ldots, K$:
\begin{align}
    & \zeta_k^H \propto 1, \quad \zeta_k^L \propto 1, \quad \frac{\tilde T_k}{q^n_k} \propto 1, \quad\text{(high-low pricing)}\label{eq:shared-A-hl}\\
    &\phi_k q^n_k \propto 1, \quad \phi_{-k} q^n_{-k} \propto 1. \quad \text{(linear pricing)} \label{eq:shared-A-lin}
\end{align}

The second variant builds on the first by further assuming that the pricing parameters are proportional to the route-specific outsourcing cost $\nu_k$ ($k = 1,\dots,K$), which is motivated by Equation (\ref{eq:mc-equality}). Formally, for $k = 1,\dots,K$:
\begin{align}
& \zeta_k^H \propto \nu_k, \quad \zeta_k^L \propto \nu_k, \quad \frac{\tilde T_k}{q^n_k} \propto 1, \quad \text{(high-low pricing)}\label{eq:shared-B-hl}\\
& \phi_k q^n_k \propto \nu_k, \quad \phi_{-k} q^n_{-k} \propto \nu_k. \quad \text{(linear pricing)}\label{eq:shared-B-lin}
\end{align}
This variant may yield parameters closer to the optimal one under our problem structure; however, as noted earlier, it relies on our model assumptions.

In our pricing benchmark parameter search, we consider both variants, rather than committing to one of them. More specifically, we pool the candidate parameters from the two variants into one single search list, so that the search procedure automatically selects the better-performing variant.

\subsubsection{Parameter Search Setup.\label{appendix: prelimit_implementation_details_search}}

Recall that each of our proposed policy and 12 benchmark policies consists of three components, i.e., outsourcing and discretionary idling, pricing, and scheduling. For each choice in each component, we set the same search grid for all related policies to ensure a fair comparison. The parameter search grid is summarized in Table \ref{tab:search_grid} and detailed in what follows. 

\begin{table}[tbp]
\TABLE
{Search grid of tuning parameters in proposed policy and 12 benchmark policies. ECC denotes extended criss-cross network. \label{tab:search_grid}}
{\tabcolsep= 0.88em
\renewcommand{\arraystretch}{0.9}
\begin{tabular}{llllll}
\toprule
\multicolumn{2}{l}{Parameter} & Spacing & Rybko-Stolyar & 3-server ECC & 10-server ECC \\
\midrule
\multicolumn{2}{l}{$\phi_1$}
 & uniform$\times\tilde{\phi}_1$ &
$[0, 1]$ \#11 &
$[0, 1]$ \#11 &
$[0, 1]$ \#11
\\
\multicolumn{2}{l}{$\phi_{-1}$}
 & uniform$\times\tilde{\phi}_1q^n_1/q^n_{-1}$ &
$[-0.5, 0.5]$ \#11 &
$[-0.5, 0.5]$ \#11 &
$[-0.5, 0.5]$ \#11
 \\
 \multicolumn{2}{l}{$\tilde{T}_1$}
 & uniform$\times q^n_1(\rho)$ &
$[0.6, 0.9]$ \#4 &
$[0.6, 0.9]$ \#4 &
$[0.6, 0.9]$ \#4 \\
\multicolumn{2}{l}{$\zeta_1^H, \zeta_1^L$}
 & uniform$\times\tilde{\zeta}_1$ &
$[0, 1]$ \#11 &
$[0, 1]$ \#11 &
$[0, 1]$ \#11 \\
\midrule
$\tilde{\lambda}$
&
\begin{tabular}{@{}l@{}}
$\tilde{\tau}=\text{abs}$ \\
$\tilde{\tau}=\text{rel}$
\end{tabular}
 &
\begin{tabular}{@{}l@{}}
uniform \\
uniform$\times\sqrt{n}$
\end{tabular}
&
\begin{tabular}{@{}l@{}}
$[2,20]$ \#10 \\
--
\end{tabular}
&
\begin{tabular}{@{}l@{}}
$[1,7]$ \#7 \\
$[2,14]$ \#7
\end{tabular} &
\begin{tabular}{@{}l@{}}
$[1,7]$ \#7 \\
$[2,14]$ \#7
\end{tabular}
\\
$\tilde{c}$
&
\begin{tabular}{@{}l@{}}
$\tilde{\tau}=\text{abs}$ \\
$\tilde{\tau}=\text{rel}$
\end{tabular}
 &
\begin{tabular}{@{}l@{}}
uniform$\times q^n_1(\rho)$ \\
uniform
\end{tabular}
&
\begin{tabular}{@{}l@{}}
$[0.1, 0.4]$ \#11 \\
--
\end{tabular}
 &
 \begin{tabular}{@{}l@{}}
$[0.1, 0.4]$ \#11 \\
$[0.6, 0.9]$ \#11
\end{tabular} &
 \begin{tabular}{@{}l@{}}
$[0.1, 0.4]$ \#11 \\
$[0.6, 0.9]$ \#11
\end{tabular}
\\
\midrule
\multicolumn{2}{l}{$\rho$}
 & uniform & step $0.01$ \#7 & step $0.01$ \#7 & step $0.01$ \#7 \\
\multicolumn{2}{l}{$s$}
 & integer
 & $\{0, 1, 2\}$ \#3
 & $\{0,1\}$ \#2
 & $\{0,1\}$ \#2 \\
\multicolumn{2}{l}{$\epsilon$}
 & uniform
 & $[0, 2]$ \#5
 & $[0, 1.5]$ \#4
 & $[0, 1.5]$ \#4 \\
\bottomrule
\end{tabular}}
{\textit{Note.} ``uniform$\times\tilde{\phi}_1$'' (resp.\ ``uniform$\times q^n_1(\rho)$'', etc.) denotes a uniform grid scaled by $\tilde{\phi}_1$ (resp.\ $q^n_1(\rho)$); the reference magnitudes $\tilde{\phi}_1$ and $\tilde{\zeta}_1$ are given in Table \ref{tab:ref-magnitude}. The notation ``$[a, b]\ \#N$'' indicates a grid of $N$ values on $[a, b]$ generated by the corresponding spacing rule; ``$\{\cdot\}\ \#N$'' indicates an integer set with $N$ elements. Under the shared-parameter assumption, only route-1 pricing parameters are searched; the rest are determined by the proportionality relations in (\ref{eq:shared-A-hl})--(\ref{eq:shared-B-lin}). The joint search over $\zeta_1^H$ and $\zeta_1^L$ is further restricted to $\zeta_1^H\geq \zeta_1^L \geq 0$. The parameter $\tilde{\tau}$ uses only $\tilde{\tau} = \text{abs}$ in Rybko-Stolyar network, indicated by ``--'', and takes values in $\{\text{abs}, \text{rel}\}$ for the extended criss-cross networks. For $\rho$, the grid is policy-specific and centered at each policy's approximate optimum (located via a coarse search) with 7 values of step $0.01$.}
\end{table}

\textit{1) Pricing parameters.}
Under both variants of shared-parameter assumption (see Appendix \ref{appendix: prelimit_implementation_details_shared}), only the route-1 pricing parameters are searched; the rest are automatically determined by Equations (\ref{eq:shared-A-hl})--(\ref{eq:shared-B-lin}). Therefore, the parameters to search include $(\phi_1,\phi_{-1})$ (resp. $(\tilde{T}_1,\zeta_1^H,\zeta_1^L)$) for linear (resp. high-low) pricing, whose search grid is set based on reference magnitude $\tilde{\phi}_1$ (resp. $\tilde{\zeta}_1$) given in Table \ref{tab:ref-magnitude}. For linear pricing, $\phi_1$ is generated as a ratio of $\tilde\phi_1$ from a uniform grid, and $\phi_{-1}$ as a ratio of $\tilde\phi_1q^n_1/q^n_{-1}$, where the factor $q^n_1/q^n_{-1}$ accounts for the scale difference between the queue-length terms multiplied by $\phi_1$ and $\phi_{-1}$ in Equation (\ref{benchmark_pricing_linear_zeta}). For high-low pricing, the queue threshold $\tilde{T}_1$ is generated by a ratio grid over queue-length bound $q^n_1(\rho)$, and drift-control values $\zeta_1^H$ and $\zeta_1^L$ are each generated by a shared ratio grid over $\tilde\zeta_1$, with the restriction $\zeta_1^H\geq \zeta_1^L \geq 0$. 

\textit{2) Scheduling parameters.} Recall that the tuning parameter $\tilde{\tau} \in \{\text{abs}, \text{rel}\}$ specifies the form of congestion-level term in proposed scheduling policy. For $\tilde{\tau} = \text{abs}$, we set the parameter $\tilde{c}$ as a ratio of the first-route queue-lengths bound $q^n_1(\rho)$. For $\tilde{\tau} = \text{rel}$, we set search list of $\tilde{\lambda}$ as a ratio of $\sqrt{n}$, as it balances the congestion-level term $\Delta_k(t;\text{rel},\tilde{c})$ with order $O(1)$ and the standard pressure term with order $O(\sqrt{n})$. In the Rybko-Stolyar problem, the two routes have identical bounds, making the abs and rel rules equivalent under a parameter transformation; we therefore search only $\tilde{\tau} = \text{abs}$ to avoid redundant computation.

\textit{3) Parameter $\rho$.} The search grid for $\rho$ is problem- and policy-specific. We adopt a two-stage search: a coarse grid first locates each policy's approximate optimal $\rho$, which suffices because cost and delay are monotone in $\rho$; we then refine with $7$ grid values spaced $0.01$ apart centered around this approximate optimum. A uniform grid covering all policies' optima with step $0.01$ would be computationally infeasible, since the optima differ substantially across policies. The two-stage search concentrates simulation effort near each policy's optimum.

\textit{4) Parameter $s$.} We search safety-stock $s$ over a small set of nonnegative integers, expanding the upper bound whenever the optimum is attained there.

\textit{5) Parameter $\epsilon$.} We assume $\epsilon^n = \epsilon \cdot n^{1/4}$ and search over tuning parameter $\epsilon$. The grid range is chosen such that either the optimum lies in the interior of the grid, or the optimum lies at the upper bound but the optimized cost grouped by $\epsilon$ has plateaued (i.e., the optimized cost at the upper bound differs by less than $1\%$ from that at the adjacent grid point).

Besides the tuning parameters mentioned in Table \ref{tab:search_grid}, there are additional parameters for $4$ benchmark policies using proposed scheduling policy (see Appendix \ref{sec: benchmark_combination}): we set the initial scheduling parameters $\pi_s^{\text{init}}$ as $\tilde{\lambda} = 4$ and $\tilde{c} = 0.25 q^n_1(\rho)$ for $\tilde{\tau} = \text{abs}$, and $\tilde{\lambda} = 4\sqrt{n}$ and $\tilde{c} = 0.75$ for $\tilde{\tau} = \text{rel}$; the maximum number of pricing-scheduling iterations is set as $\bar{K} = 5$.

After the search, we check whether the searched optimum lies in the interior of the grid or has plateaued at the boundary (i.e., the optimized cost at the boundary differs by less than $1\%$ from that at the adjacent grid point). If neither holds for a certain policy along some component, we extend that policy's grid for the component using the same spacing and rerun the parameter search for this policy, repeating until one of the two conditions is met. Other policies sharing the original grid are unaffected.

\subsubsection{Simulation Setup.\label{appendix: prelimit_implementation_details_simulation}} 

For each test problem, policy and tuning parameter, we simulate under the following setup.

\textit{Number of paths and simulation length.} We use $5{,}000$ simulation paths, and each path is simulated for $100{,}000$ events, where an event represents new job arrival or service completion. The discounted cost beyond the simulation horizon is approximated using a steady-state cost rate, which is estimated by the average cost rate over the last $20\%$ of the simulation events. The number of paths and simulation length are limited by the computation burden; see Appendix \ref{appendix: prelimit_implementation_details_burden}.

\textit{Initial state.} The initial state is set to $10$ jobs at the first buffer of each route and $0$ jobs elsewhere. The reported discounted cost depends on this initial state.

\textit{Delay probability.} In computing delay probability of each route, we discard jobs departing during the first $20\%$ of the simulation events, and only include those departing during the remaining $80\%$ simulation events.

\textit{Random seed.} For each test problem, all policies and parameter combinations share the same random seed, ensuring that relative performance differences reflect parameter choices rather than simulation noise.

\subsection{Computational Burden\label{appendix: prelimit_implementation_details_burden}} 

The neural network training is implemented in TensorFlow and adapts the code in \cite{Ata_Harrison_Si_2024_drift, Ata_Harrison_Si_2024_singular}. In neural network training, given each value of $\rho$ and corresponding reference policy (see Table \ref{tab: NN_parameters}), the Rybko-Stolyar problem takes approximately 3 hours using 10 CPUs, the 3-server problem takes approximately 6 hours using 5 CPUs, and the 10-server problem takes approximately 1 day using 20 CPUs.

The prelimit simulation is implemented in C++. We report an estimated computational burden of each test problem and policy in Table \ref{tab:comp_burden}, including the number of parameter combinations to be simulated (given search grid of each parameter in Table~\ref{tab:search_grid}) and a total single-core CPU time. The aliases of benchmark policies follow Table \ref{tab: benchmark_parameter_search_alias}. Recall the different search schemes introduced in Appendix \ref{sec: benchmark_combination}. For benchmarks using proposed scheduling (C1--C2, D1--D2) and thus the alternating pricing--scheduling search in the inner loop, the reported number equals the product of outer-loop grid size $|\Pi_o|$, inner-loop grid size $|\Pi_p|+|\Pi_s|$, and $2$ (the approximate average number of iterations observed in our experiments).

\begin{table}[tbp]
\TABLE
{Estimated computational burden of prelimit simulation with 5,000 paths per parameter combination. ECC denotes extended criss-cross network. \label{tab:comp_burden}}
{\tabcolsep= 0.88em
\renewcommand{\arraystretch}{0.9}
\begin{tabular}{lcccccc}
\toprule
 & \multicolumn{2}{c}{Rybko-Stolyar} & \multicolumn{2}{c}{3-server ECC} & \multicolumn{2}{c}{10-server ECC} \\
\cmidrule(lr){2-3} \cmidrule(lr){4-5} \cmidrule(lr){6-7}
Policy & \#Comb. & CPU$\cdot$days & \#Comb. & CPU$\cdot$days & \#Comb. & CPU$\cdot$days \\
\midrule
Proposed           & \textbf{11,550} & \textbf{22} &  \textbf{8,624} & \textbf{23} &  \textbf{8,624} & \textbf{420} \\
\midrule
A1, A2             & 25,410 & 11 & 13,552 &  6 & 13,552 &  12 \\
A3, A4             & 55,440 & 22 & 29,568 & 12 & 29,568 &  23 \\
B1, B2             &  5,082 &  2 &  3,388 &  1 &  3,388 &   3 \\
B3, B4             & 11,088 &  4 &  7,392 &  3 &  7,392 &   6 \\
\midrule
C1                 &  \textbf{73,920} &  \textbf{73} &  \textbf{71,456} &  \textbf{78} &  \textbf{71,456} & \textbf{164} \\
C2                 & \textbf{133,980} & \textbf{146} & \textbf{135,520} & \textbf{156} & \textbf{135,520} & \textbf{327} \\
D1                 &  14,784 &  15 &  17,864 &  19 &  17,864 &  41 \\
D2                 &  26,796 &  29 &  33,880 &  39 &  33,880 &  82 \\
\bottomrule
\end{tabular}}
{\textit{Note.} ``\#Comb.'' denotes the number of parameter combinations simulated, and ``CPU$\cdot$days'' denotes the approximate total single-core simulation time. The aliases of benchmark policies follow Table \ref{tab: benchmark_parameter_search_alias}. For A1--B4, each entry refers to a single policy (e.g., A1 and A2 each require the reported amount). For C1--D2, which use the alternating pricing--scheduling search in the inner loop of Algorithm \ref{alg:inner_loop}, entries assume two iterations of the inner loop for every outer-loop parameter $\pi_o$, which is the average number of iterations observed in our experiments. }
\end{table}

\begin{table}[htbp]
\TABLE
{Composition and aliases of the 12 benchmark policies. Least-relative-slack (queues) = Least-relative-slack policy based on queue lengths.\label{tab: benchmark_parameter_search_alias}}
{\tabcolsep= 0.4em
\renewcommand{\arraystretch}{0.9}
\begin{tabular}{lcccccc}
\toprule
Alias & & Outsourcing and Discretionary Idling & & Pricing & & Scheduling
\\  
\midrule
A1
& & Single-pushing & & Linear  & & Least-relative-slack \\
A2 & & Single-pushing & & Linear  & & Least-relative-slack (queues) \\
A3
& & Single-pushing & & High-low  & & Least-relative-slack \\
A4 & & Single-pushing & & High-low  & & Least-relative-slack (queues) \\ \midrule
B1 & & Threshold-based & & Linear  & & Least-relative-slack \\
B2  & & Threshold-based & & Linear  & & Least-relative-slack (queues) \\
B3 & & Threshold-based & & High-low  & & Least-relative-slack \\
B4  & & Threshold-based & & High-low  & & Least-relative-slack (queues) \\ \midrule
C1 & & Single-pushing & & Linear  & & Proposed scheduling \\
C2 & & Single-pushing & & High-low  & & Proposed scheduling \\ 
\midrule
D1  & & Threshold-based & & Linear  & & Proposed scheduling \\
D2  & & Threshold-based & & High-low  & & Proposed scheduling \\ 
\bottomrule
\end{tabular}}
{}
\end{table}

The number of parameter combinations varies across different benchmark policies, due to different number of tuning parameters involved (see Table \ref{tab: benchmark_three_parts}) and different search schemes used (see Appendix \ref{sec: benchmark_combination}). Among $12$ benchmark policies, the most computationally demanding benchmarks are C1/C2 using single-pushing policy, linear/high-low pricing, and proposed scheduling. In fact, they include $4$ parameters in outer-loop $\Pi_o$, $2$--$3$ parameters in inner-loop pricing $\Pi_p$, and $2$ parameters in inner-loop scheduling $\Pi_s$, thus the computational burden of their search is roughly equivalent to a $6$--$7$ dimensional joint search. In contrast, our proposed policy uses a $6$ dimensional joint search (see Section \ref{sec: policy_parameters}). Thus benchmark policies C1/C2 have significantly more parameter combinations than our proposed policy.

Despite having fewer parameter combinations, the proposed policy incur total simulation costs of the same order of magnitude as benchmarks C1/C2, as it involves evaluating neural network based gradient values per simulation step. In Rybko-Stolyar and 3-server problems, the per-simulation time is low because we cache the evaluated gradient values in a map and reuse the gradient by fast table lookup then. In the 10-server problem, such caching is infeasible due to large number of states in higher-dimensional state space, thus the neural network is evaluated during simulation and leads to large simulation time.

Assuming ideal parallelization across 200 CPUs, the prelimit simulation for all policies takes approximately 9.2 days in total, of which 2.3 days are for the proposed policy and 6.9 days are for the 12 benchmark policies. Most of the computational burden comes from the proposed policy and benchmarks C1 and C2, which together take 7.0 days. By test problem, the Rybko-Stolyar problem requires 1.2 days for these three policies (0.11, 0.36, and 0.73 days for the proposed policy, C1, and C2, respectively), the 3-server problem requires 1.3 days (0.12, 0.39, and 0.78 days), and the 10-server problem requires 4.6 days (2.10, 0.82, and 1.63 days).

\section{Further Results on Test Problems\label{appendix: test_problem_further_results}}

\subsection{Two-dimensional Slices of Gradient in $3$-server and $10$-server Problems\label{appendix: test_problem_NN_training}} 

\begin{figure}[h]
\FIGURE
{\begin{tabular}{c}
\includegraphics[width=0.5\figbase]{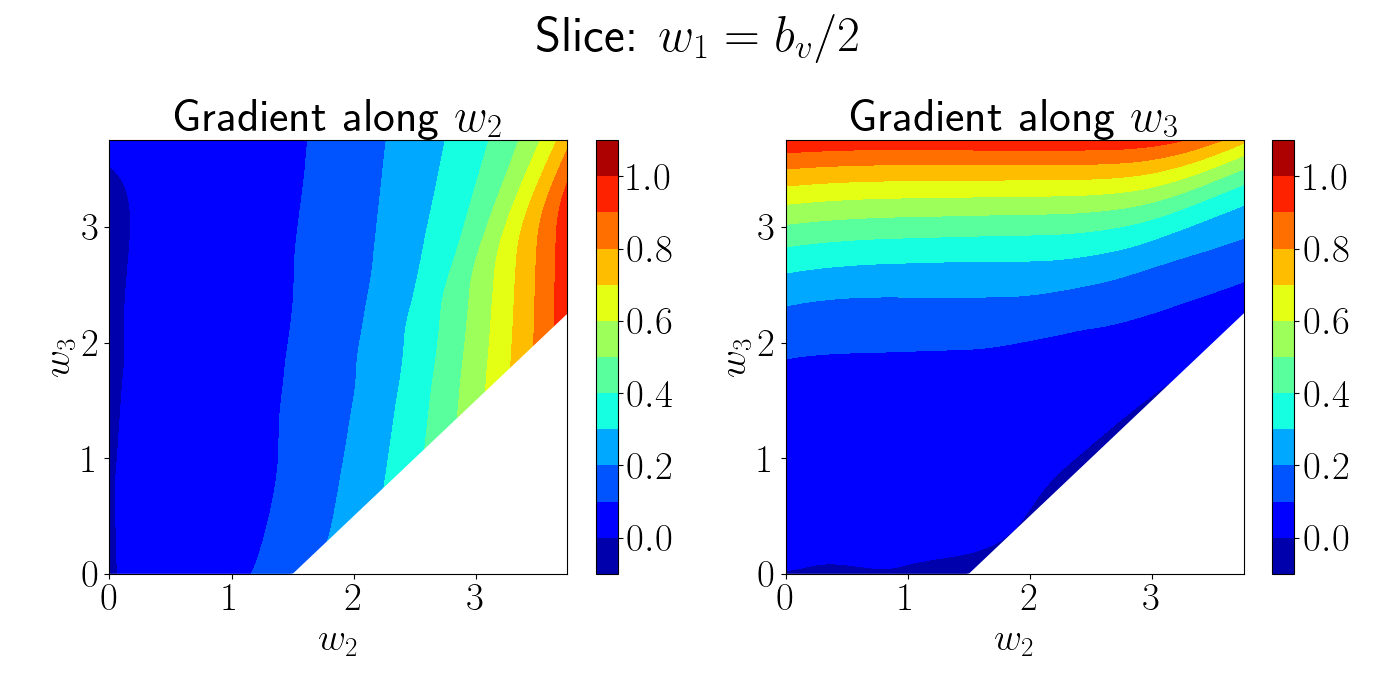}
\includegraphics[width=0.5\figbase]{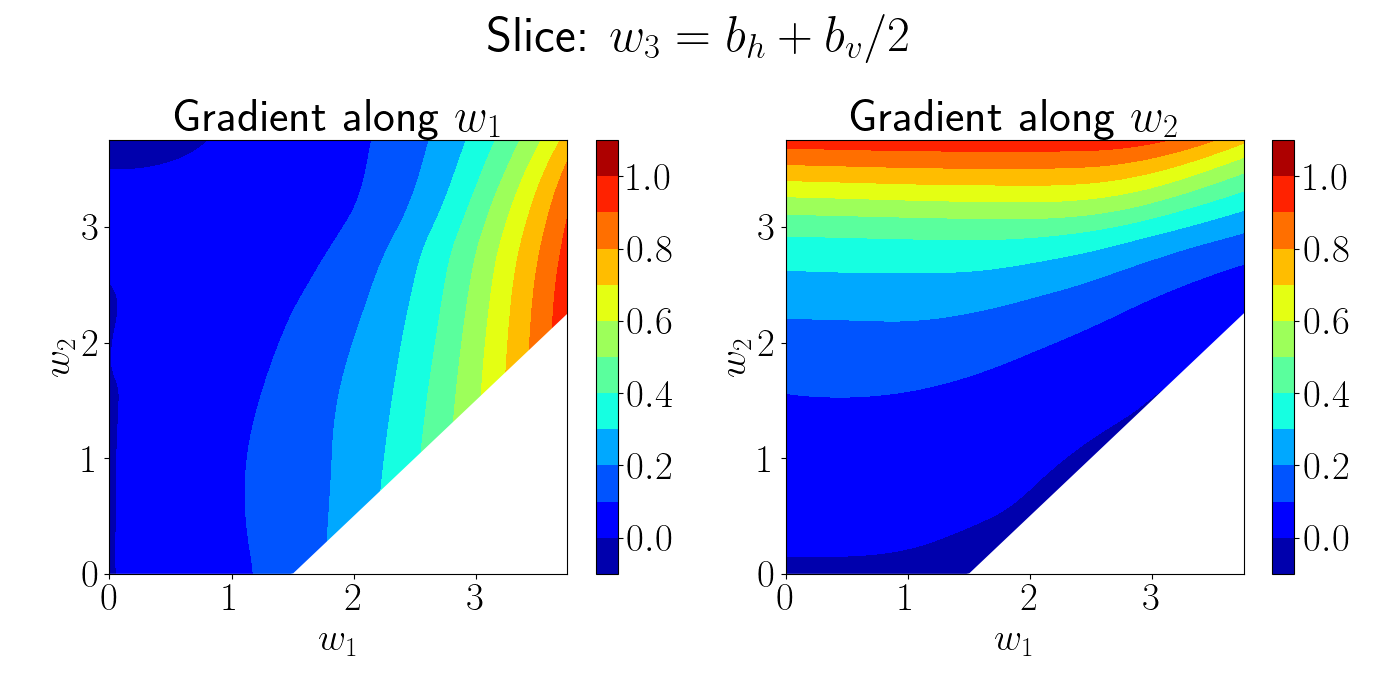}
\end{tabular}}
{Two-dimensional slices of the gradient function in $3$-server extended criss-cross problem. Denote $b_v:=10d^n_v$ and $b_h:=10d^n_h$. In the left two panels, we fix $w_1 = b_v/2$ and plot the second and third entries of gradient function on $(w_2, w_3)$-space. In the right two panels, we fix $w_3 = b_h + b_v/2$ and plot the first and second entries of gradient function on $(w_1, w_2)$-space.
\label{fig:ECC3_NN_gradient}}
{}
\end{figure}

\begin{figure}[h]
\FIGURE
{\begin{tabular}{c}
\includegraphics[width=0.5\figbase]{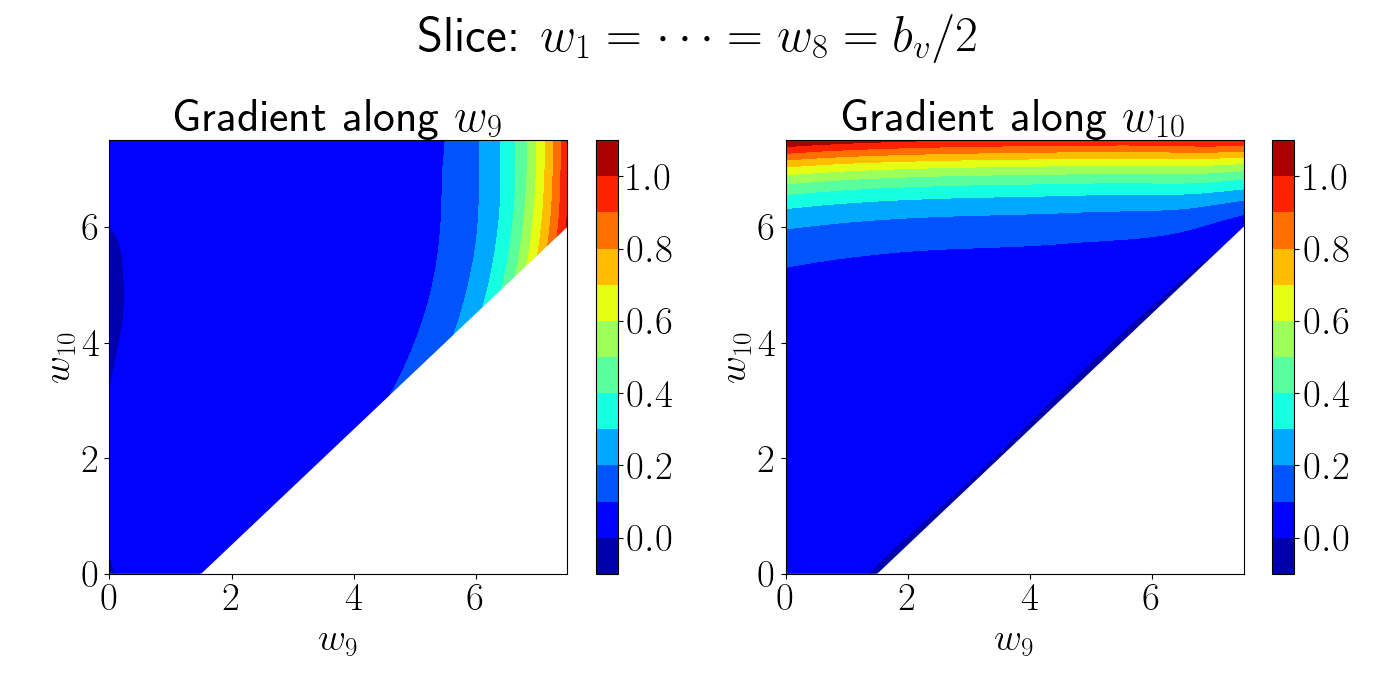}
\includegraphics[width=0.5\figbase]{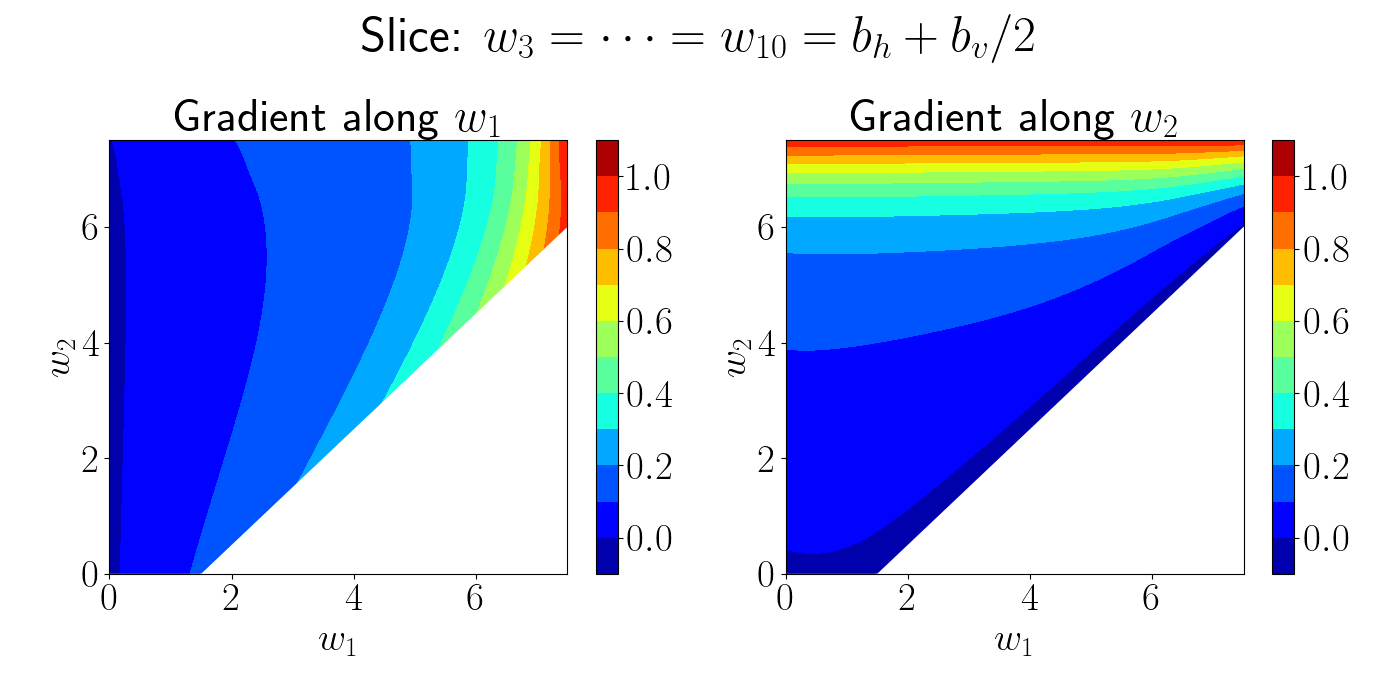}
\end{tabular}}
{Two-dimensional slides of the gradient function in $10$-server extended criss-cross problem. Denote $b_v:=10d^n_v$ and $b_h:=10d^n_h$. In the left two panels, we fix $w_1 = \cdots = w_8 = b_v/2$ and plot the $9$th and $10$th entries of gradient function on $(w_9, w_{10})$-space. In the right two panels, we fix $w_3 = \dots = w_{10} = b_h + b_v/2$ and plot the first and second entries of gradient function on $(w_1, w_2)$-space.
\label{fig:ECC10_NN_gradient}}
{}
\end{figure}

In Figures \ref{fig:ECC3_NN_gradient} and \ref{fig:ECC10_NN_gradient}, we visualize two-dimensional slices of the $3$-dimensional gradient and $10$-dimensional gradient in the workload problem for the $3$-server and $10$-server problems, respectively. In each $p$-server ($p$ equals $3$ or $10$) problem, we present two types of two-dimensional slices. The first type of slice fixes a workload value for first $p-2$ dimensions, and presents the value of last two entries of gradient $G_{w_2}(\cdot)$ along last two dimensions $(w_{p-1},w_p)$. The second type of slice fixes a workload value for last $p-2$ dimensions, and presents the value of first two entries of gradient $G_{w_2}(\cdot)$ along first two dimensions $(w_1,w_2)$.

\subsection{Prelimit Simulation\label{appendix: test_problem_prelimit_simulation}} 

We provide searched tuning parameters for proposed policy and $12$ benchmark policies in Appendix \ref{appendix: test_problem_optimal_parameters}, and the corresponding delay performance in Appendix \ref{appendix: test_problem_delay_performance}.

\subsubsection{Searched Tuning Parameters in the Original System.\label{appendix: test_problem_optimal_parameters}} 

Table \ref{tab: searched_optimal_parameters} reports the searched tuning parameters for proposed policy and $12$ benchmark policies in the original system, for which we explain in the following paragraphs. 

{\footnotesize
\begin{longtable}{@{}>{\raggedright\arraybackslash}p{1.9cm}>{\raggedright\arraybackslash}p{6.4cm}>{\raggedright\arraybackslash}p{2.0cm}>{\raggedright\arraybackslash}p{2.0cm}>{\raggedright\arraybackslash}p{2.0cm}@{}}
\caption{Searched tuning parameters for proposed policy and 12 benchmark policies.}
\label{tab: searched_optimal_parameters} \\

\toprule
Policy & Parameter $\pi_p$ & Parameter $\pi_s$ & Parameter $\pi_o$ & Cost (SE) \\ \midrule
\endfirsthead
\toprule
Policy & Parameter $\pi_p$ & Parameter $\pi_s$ & Parameter $\pi_o$ & Cost (SE) \\ \midrule
\endhead
\midrule
\multicolumn{5}{r}{\textit{Continued on next page}} \\
\endfoot
\bottomrule
\endlastfoot

\multicolumn{5}{c}{\textit{Panel A: Rybko-Stolyar network}} \\
\hline
Proposed-abs & -- & $20, 0.31$ & $1, 1.02, 1.0$ & $\mathbf{2.93\ (0.03)}$ \\
\midrule
A1 & $-0.0006, -0.0004; -0.0015, -0.001$ & -- & $1, 0.96, 2.0$ & $4.35\ (0.04)$ \\
A2 & $-0.0014, 0.0; -0.0014, 0.0$ & -- & $1, 0.96, 2.0$ & $4.03\ (0.04)$ \\
A3 & $0.8, 0.008, 0.04; 0.8, 0.02, 0.1$ & -- & $1, 0.95, 2.0$ & $4.42\ (0.04)$ \\
A4 & $0.7, 0.016, 0.064; 0.7, 0.016, 0.064$ & -- & $1, 0.96, 2.0$ & $4.15\ (0.04)$ \\
\midrule
B1 & $-0.0016, -0.0004; -0.0016, -0.0004$ & -- & $1, 1.07$ & $5.92\ (0.05)$ \\
B2 & $-0.002, 0.0; -0.002, 0.0$ & -- & $1, 1.07$ & $5.66\ (0.04)$ \\
B3 & $0.6, 0.016, 0.088; 0.6, 0.016, 0.088$ & -- & $1, 1.07$ & $6.01\ (0.05)$ \\
B4 & $0.6, 0.016, 0.096; 0.6, 0.016, 0.096$ & -- & $1, 1.07$ & $5.80\ (0.05)$ \\
\midrule
C1-abs & $-0.001, 0.0; -0.001, 0.0$ & $8, 0.31$ & $1, 1.00, 1.5$ & \uline{$3.24\ (0.04)$} \\
C2-abs & $0.7, 0.008, 0.056; 0.7, 0.008, 0.056$ & $8, 0.31$ & $1, 1.00, 2.0$ & $3.25\ (0.04)$ \\
\midrule
D1-abs & $-0.001, 0.0; -0.001, 0.0$ & $12, 0.34$ & $1, 1.11$ & $3.64\ (0.04)$ \\
D2-abs & $0.7, 0.008, 0.08; 0.7, 0.008, 0.08$ & $16, 0.31$ & $1, 1.11$ & $3.59\ (0.04)$ \\
\hline
\multicolumn{5}{c}{\textit{Panel B: $3$-server extended criss-cross network}} \\
\hline
Proposed-abs & -- & $2, 0.13$ & $0, 0.91, 1.5$ & $13.47\ (0.14)$ \\
Proposed-rel & -- & $80, 0.87$ & $0, 0.91, 1.0$ & $\mathbf{13.37\ (0.14)}$ \\
\midrule
A1 & $-0.0024, -0.00011; -0.0024, -0.0002$ & -- & $0, 0.93, 1.5$ & $16.60\ (0.16)$ \\
A2 & $-0.0048, 0.0; -0.0048, 0.0$ & -- & $0, 0.83, 1.0$ & $23.59\ (0.17)$ \\
A3 & $0.7, 0.012, 0.084; 0.7, 0.018, 0.126$ & -- & $0, 0.93, 0.5$ & $16.73\ (0.16)$ \\
A4 & $0.6, 0.024, 0.108; 0.6, 0.036, 0.162$ & -- & $0, 0.83, 0.5$ & $24.20\ (0.17)$ \\
\midrule
B1 & $-0.0028, -0.00011; -0.0028, -0.0002$ & -- & $0, 1.02$ & $17.77\ (0.15)$ \\
B2 & $-0.0036, 0.00034; -0.0036, 0.0006$ & -- & $0, 0.89$ & $20.44\ (0.16)$ \\
B3 & $0.6, 0.024, 0.084; 0.6, 0.036, 0.126$ & -- & $0, 1.02$ & $18.25\ (0.15)$ \\
B4 & $0.7, 0.012, 0.072; 0.7, 0.018, 0.108$ & -- & $0, 0.91$ & $20.71\ (0.17)$ \\
\midrule
C1-abs & $-0.002, -0.00011; -0.002, -0.0002$ & $3, 0.19$ & $0, 0.91, 1.0$ & $14.80\ (0.15)$ \\
C1-rel & $-0.0032, 0.00023; -0.0032, 0.0004$ & $80, 0.84$ & $0, 0.91, 0.5$ & \uline{$14.60\ (0.15)$} \\
C2-abs & $0.7, 0.012, 0.06; 0.7, 0.018, 0.09$ & $4, 0.19$ & $0, 0.91, 0.5$ & $14.84\ (0.16)$ \\
C2-rel & $0.7, 0.012, 0.072; 0.7, 0.018, 0.108$ & $80, 0.84$ & $0, 0.89, 0.5$ & $15.55\ (0.16)$ \\
\midrule
D1-abs & $-0.0028, 0.00023; -0.0028, 0.0004$ & $3, 0.13$ & $0, 0.94$ & $15.52\ (0.15)$ \\
D1-rel & $-0.0024, 0.0; -0.0024, 0.0$ & $80, 0.84$ & $0, 0.94$ & $15.76\ (0.15)$ \\
D2-abs & $0.7, 0.012, 0.072; 0.7, 0.018, 0.108$ & $3, 0.13$ & $0, 0.94$ & $15.63\ (0.15)$ \\
D2-rel & $0.7, 0.012, 0.072; 0.7, 0.018, 0.108$ & $80, 0.84$ & $0, 0.94$ & $15.88\ (0.15)$ \\
\hline
\multicolumn{5}{c}{\textit{Panel C: $10$-server extended criss-cross network}} \\
\hline
Proposed-abs & -- & $6, 0.22$ & $0, 0.93, 1.5$ & $\mathbf{17.37\ (0.45)}$ \\
Proposed-rel & -- & $160, 0.81$ & $0, 0.90, 0.5$ & $19.54\ (0.44)$ \\
\midrule
A1 & $-0.0008, -0.00006; -0.001, -0.0004$ & -- & $0, 0.95, 0.5$ & $28.43\ (0.62)$ \\
A2 & $-0.0024, -0.00006; -0.003, -0.0004$ & -- & $0, 0.89, 0.5$ & $43.97\ (0.49)$ \\
A3 & $0.8, 0.012, 0.048; 0.8, 0.06, 0.24$ & -- & $0, 0.95, 1.5$ & $28.49\ (0.58)$ \\
A4 & $0.8, 0.024, 0.072; 0.8, 0.12, 0.36$ & -- & $0, 0.86, 0.5$ & $46.28\ (0.62)$ \\
\midrule
B1 & $-0.0024, 0.00006; -0.003, 0.0004$ & -- & $0, 1.07$ & $36.20\ (0.54)$ \\
B2 & $-0.0024, -0.00003; -0.003, -0.0002$ & -- & $0, 0.94$ & $45.85\ (0.62)$ \\
B3 & $0.7, 0.012, 0.072; 0.7, 0.06, 0.36$ & -- & $0, 1.07$ & $37.35\ (0.59)$ \\
B4 & $0.8, 0.024, 0.096; 0.8, 0.12, 0.48$ & -- & $0, 0.94$ & $48.46\ (0.70)$ \\
\midrule
C1-abs & $-0.0012, 0.0; -0.0015, 0.0$ & $3, 0.22$ & $0, 0.93, 0.5$ & $19.06\ (0.44)$ \\
C1-rel & $-0.0016, 0.00003; -0.002, 0.0002$ & $120, 0.78$ & $0, 0.93, 0.5$ & $18.93\ (0.46)$ \\
C2-abs & $0.8, 0.012, 0.036; 0.8, 0.06, 0.18$ & $4, 0.22$ & $0, 0.93, 1.5$ & \uline{$18.92\ (0.44)$} \\
C2-rel & $0.8, 0.012, 0.036; 0.8, 0.06, 0.18$ & $80, 0.78$ & $0, 0.92, 1.5$ & $19.76\ (0.49)$ \\
\midrule
D1-abs & $-0.002, 0.00009; -0.0025, 0.0006$ & $2, 0.22$ & $0, 1.00$ & $26.82\ (0.60)$ \\
D1-rel & $-0.0024, 0.00009; -0.003, 0.0006$ & $80, 0.75$ & $0, 1.02$ & $23.11\ (0.56)$ \\
D2-abs & $0.7, 0.012, 0.048; 0.7, 0.06, 0.24$ & $3, 0.25$ & $0, 0.99$ & $27.79\ (0.65)$ \\
D2-rel & $0.8, 0.012, 0.072; 0.8, 0.06, 0.36$ & $80, 0.75$ & $0, 1.01$ & $25.80\ (0.58)$ \\
\end{longtable}}

First, we use aliases of the 12 benchmark policies, as given in Table \ref{tab: benchmark_parameter_search_alias}. Second, for policies using proposed scheduling policy, we report results separately for $\tilde{\tau} = \text{abs}$ and $\tilde{\tau} = \text{rel}$ in two rows, distinguished by the suffix appended to the policy name. To accommodate the two-row reporting for these policies, we additionally include a column ``Cost (SE)'' showing the optimized cost (with standard error) for each row; the optimized cost reported in Table \ref{tab: prelimit_simulation} corresponds to the smaller cost over $\tilde{\tau}\in\{\text{abs}, \text{rel}\}$. Third, for all policies, we report searched tuning parameters by three columns, following three-type parameter classification in Appendix \ref{sec: benchmark_combination}: 
\begin{align*}
    \pi_p:=(\text{pricing parameters in Panel B of Table \ref{tab: benchmark_three_parts}}),\quad
    \pi_s:=(\tilde{\lambda},\tilde{c}),\quad
    \pi_o: = (\rho,s,\epsilon,\tilde{\tau}).
\end{align*}
The detailed meaning of these three columns in Table \ref{tab: searched_optimal_parameters} is given below. 

\textit{Parameter $\pi_p$.} The proposed policy has no tuning pricing parameters, and the corresponding cells are marked ``--''. For policies using linear pricing, the column reports $\{(\phi_k, \phi_{-k}),k = 1,\dots,K\} \in \mathbb{R}^{K \times 2}$. For policies using high-low pricing, the column reports $\{(\tilde{T}_k/q_k^n(\rho), \zeta_k^L, \zeta_k^H),k = 1,\dots,K\} \in \mathbb{R}^{K \times 3}$. In both cases, the entries are listed route by route, with semicolons separating routes. In Panels B and C, only the entries of first and last routes are shown; the intermediate routes share identical parameter values with the first route.
    
\textit{Parameter $\pi_s$.} For policies using proposed scheduling policy, recall that $\tilde{\tau}\in\{\text{abs}, \text{rel}\}$ is indicated by the policy-name suffix as described above, the column reports $(\tilde{\lambda}, \tilde{c}/q^n_1(\rho))$ for $\tilde{\tau} = \text{abs}$ and $(\tilde{\lambda}, \tilde{c})$ for $\tilde{\tau} = \text{rel}$. The policies using least-relative-slack policy or least-relative-slack policy based on queue lengths have no scheduling parameters and are marked ``--''.
    
\textit{Parameter $\pi_o$.} For proposed policy and benchmark policies using single-pushing policy, we report $(s, \rho, \epsilon)$. For benchmark policies using threshold-based policy, we report $(s, \rho)$.

\subsubsection{Delay Performance.\label{appendix: test_problem_delay_performance}}

Table \ref{tab: delay_probabilities} reports the delay probabilities under the proposed policy and 12 benchmark policies under tuning parameters included in Table \ref{tab: searched_optimal_parameters}; the aliases of benchmark policies used hereafter refer to Table \ref{tab: benchmark_parameter_search_alias}. For policies using proposed scheduling policy, we report results separately for $\tilde{\tau} = \text{abs}$ and $\tilde{\tau} = \text{rel}$ in two rows, distinguished by the suffix appended to the policy name. The Rybko-Stolyar, 3-server, and 10-server problems have $2$, $4$, and $11$ routes respectively, and each cell lists the per-route delay probabilities ($\%$) in route order. All reported values satisfy the delay probability constraint $\varepsilon = 1\%$.

{\footnotesize
\begin{longtable}{@{}>{\raggedright\arraybackslash}p{1.9cm}>{\raggedright\arraybackslash}p{2.1cm}>{\raggedright\arraybackslash}p{2.9cm}>{\raggedright\arraybackslash}p{7.9cm}@{}}
\caption{Delay probabilities (\%) under proposed policy and benchmark policies. ECC denotes extended criss-cross network.}
\label{tab: delay_probabilities} \\
\toprule
Policy & Rybko-Stolyar & 3-server ECC & 10-server ECC \\ \midrule
\endfirsthead
\toprule
Policy & Rybko-Stolyar & 3-server ECC & 10-server ECC \\ \midrule
\endhead
\midrule
\multicolumn{4}{r}{\textit{Continued on next page}} \\
\endfoot
\bottomrule
\endlastfoot
Proposed-abs & $1.00, 1.00$ & $0.96, 0.80, 0.75, 0.99$ & $0.83, 0.89, 0.89, 0.99, 0.94, 0.97, 0.93, 0.90, 0.74, 0.49, 0.93$ \tabularnewline
Proposed-rel & -- & $0.99, 0.94, 0.89, 0.85$ & $0.82, 0.85, 0.91, 0.95, 0.90, 0.92, 0.91, 0.89, 0.83, 0.94, 0.35$ \tabularnewline
\midrule
A1 & $0.93, 0.98$ & $0.12, 0.25, 0.39, 0.99$ & $0.00, 0.01, 0.01, 0.03, 0.06, 0.10, 0.15, 0.21, 0.28, 0.36, 0.96$ \tabularnewline
A2 & $0.99, 1.00$ & $0.35, 0.71, 0.99, 0.25$ & $0.01, 0.04, 0.10, 0.18, 0.29, 0.42, 0.56, 0.76, 0.89, 1.00, 0.23$ \tabularnewline
A3 & $0.92, 0.99$ & $0.11, 0.26, 0.37, 0.99$ & $0.00, 0.01, 0.02, 0.04, 0.05, 0.10, 0.14, 0.21, 0.31, 0.36, 0.98$ \tabularnewline
A4 & $1.00, 1.00$ & $0.36, 0.71, 1.00, 0.24$ & $0.01, 0.04, 0.10, 0.18, 0.30, 0.43, 0.57, 0.71, 0.87, 0.98, 0.11$ \tabularnewline
\midrule
B1 & $0.99, 0.99$ & $0.12, 0.28, 0.49, 0.99$ & $0.00, 0.01, 0.01, 0.03, 0.05, 0.08, 0.13, 0.19, 0.28, 0.38, 0.99$ \tabularnewline
B2 & $1.00, 1.00$ & $0.14, 0.41, 0.84, 0.79$ & $0.00, 0.02, 0.07, 0.13, 0.22, 0.34, 0.50, 0.65, 0.84, 0.99, 0.39$ \tabularnewline
B3 & $0.99, 1.00$ & $0.12, 0.28, 0.49, 0.98$ & $0.00, 0.01, 0.01, 0.02, 0.05, 0.09, 0.13, 0.20, 0.26, 0.37, 0.94$ \tabularnewline
B4 & $0.98, 0.99$ & $0.15, 0.44, 0.89, 0.82$ & $0.00, 0.02, 0.06, 0.12, 0.21, 0.36, 0.48, 0.66, 0.83, 1.00, 0.41$ \tabularnewline
\midrule
C1-abs & $1.00, 0.96$ & $0.88, 0.76, 0.91, 0.95$ & $0.98, 0.87, 0.93, 0.94, 0.94, 0.95, 0.84, 0.80, 0.64, 0.45, 0.81$ \tabularnewline
C1-rel & -- & $0.99, 0.87, 0.98, 0.78$ & $0.86, 0.85, 0.89, 0.87, 0.87, 0.86, 0.86, 0.83, 0.79, 0.94, 0.73$ \tabularnewline
C2-abs & $0.99, 0.97$ & $0.92, 0.81, 0.95, 0.99$ & $0.89, 0.87, 0.85, 0.90, 0.88, 0.88, 0.86, 0.81, 0.64, 0.45, 0.85$ \tabularnewline
C2-rel & -- & $0.90, 0.79, 0.94, 0.70$ & $0.93, 0.90, 0.91, 0.89, 0.91, 0.86, 0.85, 0.75, 0.78, 0.97, 0.70$ \tabularnewline
\midrule
D1-abs & $0.96, 0.95$ & $0.98, 0.79, 0.57, 0.89$ & $0.93, 0.82, 0.90, 0.87, 0.92, 0.91, 0.88, 0.87, 0.68, 0.27, 0.90$ \tabularnewline
D1-rel & -- & $0.80, 0.69, 0.70, 0.90$ & $0.94, 0.86, 0.96, 0.98, 0.98, 0.96, 0.96, 0.95, 0.97, 0.83, 0.98$ \tabularnewline
D2-abs & $0.99, 1.00$ & $0.96, 0.80, 0.57, 0.89$ & $0.77, 0.73, 0.82, 0.82, 0.80, 0.81, 0.85, 0.76, 0.58, 0.21, 0.96$ \tabularnewline
D2-rel & -- & $0.83, 0.70, 0.72, 0.90$ & $0.96, 0.90, 0.91, 0.96, 0.96, 0.95, 0.93, 0.95, 0.92, 0.90, 0.90$ \tabularnewline
\end{longtable}}

Recall that Figures \ref{fig: delay_time_RS}--\ref{fig: delay_time_CC3} present delay time histograms across routes for the proposed policy and representative benchmark policies in Rybko-Stolyar and $3$-server problems. We further provide the results for $10$-server problem in Figure \ref{fig: delay_time_CC10}, where the observation is similar to those in $3$-server case (see Section \ref{sec: simulation_results}). 

\begin{figure}[htbp]
\FIGURE
{\includegraphics[scale=0.7]{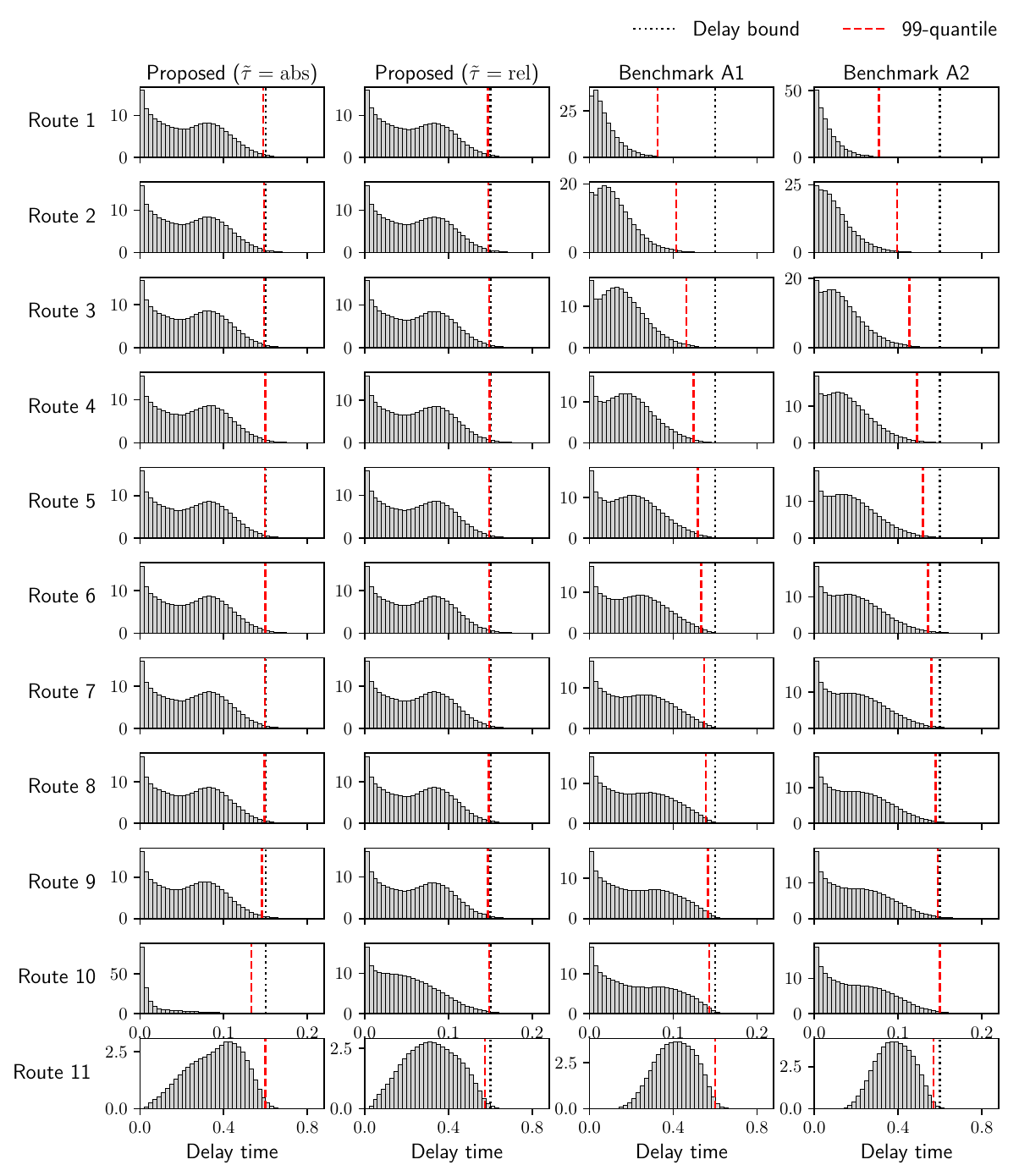}}
{Delay time histogram under the proposed policy and representative benchmark policies in the 10-server extended criss-cross network. Benchmark A1 (resp. A2): the benchmark policy using single-pushing, linear pricing, and least-relative-slack policy (resp. least-relative-slack policy based on queue lengths).\label{fig: delay_time_CC10}}
{}
\end{figure}

\end{document}